\newtheoremstyle{mystyle}
	{\baselineskip}
	{\baselineskip}
	{\itshape}
	{}
	{\bfseries}
	{.}
	{1em}
	{}
\theoremstyle{mystyle}
\newtheorem{thm}{Theorem}[section]
\newtheorem{lem}[thm]{Lemma}
\newtheorem{cor}[thm]{Corollary}
\newtheorem{prop}[thm]{Proposition}
\newtheoremstyle{mystyle2}
	{\baselineskip}
	{\baselineskip}
	{\normalfont}
	{}
	{\bfseries}
	{.}
	{1em}
	{}
\theoremstyle{mystyle2}
	\newtheorem{rem}[thm]{Remark}
\numberwithin{equation}{section}
\numberwithin{table}{section}
\numberwithin{figure}{section}
\begin{document}
\title{
	Weighted estimates and large time behavior of small amplitude solutions to the semilinear heat equation
}
\author{
	Ryunosuke Kusaba \footnote{e-mail : ryu2411501@akane.waseda.jp} \bigskip \\
	Department of Pure and Applied Physics, \\
	Graduate School of Advanced Science and Engineering, \\
	Waseda University, 3-4-1 Okubo, Shinjuku, Tokyo 169-8555, JAPAN \bigskip \\
	Tohru Ozawa \bigskip \\
	Department of Applied Physics, \\
	Waseda University, 3-4-1 Okubo, Shinjuku, Tokyo 169-8555, JAPAN
}
\date{}
\maketitle
\begin{abstract}
	We present a new method to obtain weighted $L^{1}$-estimates of global solutions to the Cauchy problem for the semilinear heat equation with a simple power of super-critical Fujita exponent.
	Our approach is based on direct and explicit computations of commutation relations between the heat semigroup and monomial weights in $\mathbb{R}^{n}$, while it is independent of the standard parabolic arguments which rely on the comparison principle or some compactness arguments.
	We also give explicit asymptotic profiles with parabolic self-similarity of the global solutions.
\end{abstract}

\bigskip
\textbf{Keywords}: Semilinear heat equations, large-time asymptotics, weighted estimates.

\bigskip
\textbf{2020 Mathematics Subject Classification}: 35K91, 35B40, 35C20.

\newpage
\section{Introduction}
We study the large time behavior of global solutions to the Cauchy problem for the semilinear heat equations of the form
\begin{align}
	\tag{P} \label{P}
	\begin{cases}
		\partial_{t} u- \Delta u=f \left( u \right), &\quad \left( t, x \right) \in \left( 0, + \infty \right) \times \mathbb{R}^{n}, \\
		u \left( 0 \right) = \varphi, &\quad x \in \mathbb{R}^{n},
	\end{cases}
\end{align}
where $u \colon \left[ 0, + \infty \right) \times \mathbb{R}^{n} \to \mathbb{R}$ is an unknown function, $\Delta$ is the Laplacian in $\mathbb{R}^{n}$, $\varphi \colon \mathbb{R}^{n} \to \mathbb{R}$ is a given data at $t=0$, and $f \colon \mathbb{R} \to \mathbb{R}$ is a nonlinear term such that there exist constants $K>0$ and $p \in \left( 1, + \infty \right)$ that ensure the estimate
\begin{align}
	\label{eq:esti_f}
	\left\lvert f \left( \xi \right) -f \left( \eta \right) \right\rvert \leq K \left( \left\lvert \xi \right\rvert^{p-1} + \left\lvert \eta \right\rvert^{p-1} \right) \left\lvert \xi - \eta \right\rvert
\end{align}
for all $\xi, \eta \in \mathbb{R}$ and that $f \left( 0 \right) =0$.
Typical examples are given by homogeneous functions of order $p$ such as
\begin{align*}
	f \left( \xi \right) = \pm \xi^{p}, {\,} \pm \left\lvert \xi \right\rvert^{p}, {\,} \pm \left\lvert \xi \right\rvert^{p-1} \xi.
\end{align*}

The behavior of solutions to \eqref{P} has been studied by many mathematicians since the pioneering work by Fujita \cite{Fujita} and it is revealed that the behavior changes depending on the exponent $p$ in the nonlinear term, the size of the initial data $\varphi$, and so on.
In particular, the exponent $p_{\mathrm{F}} \left( n \right) \coloneqq 1+2/n$, called the Fujita exponent, gives a threshold that characterizes the large time behavior of solutions to \eqref{P}.
In the case of super-critical Fujita exponent, namely, $p>p_{\mathrm{F}} \left( n \right)$, it is known that solutions to \eqref{P} with small initial data behave like the solution to the linear heat equation: \eqref{P} with $f \equiv 0$ (cf. \cite{Gmira-Veron, Kawanago1996}).
In this paper, we consider the large time behavior of global solutions to \eqref{P} for small initial data in $L^{1} \left( \mathbb{R}^{n} \right)$ with algebraic weights in the case where $p>p_{\mathrm{F}} \left( n \right)$.

To state our main results precisely, we introduce some notation.
Let $\left( e^{t \Delta}; t \geq 0 \right)$ be the heat semigroup given by
\begin{align*}
	e^{t \Delta} \varphi = \begin{cases}
		G_{t} \ast \varphi, &\qquad t>0, \\
		\varphi, &\qquad t=0
	\end{cases}
\end{align*}
for $\varphi \in L^{q} \left( \mathbb{R}^{n} \right)$ with $q \in \left[ 1, + \infty \right]$, where $G_{t} \colon \mathbb{R}^{n} \to \mathbb{R}$ is the Gauss kernel given by
\begin{align*}
	G_{t} \left( x \right) = \left( 4 \pi t \right)^{- \frac{n}{2}} \exp \left( - \frac{\left\lvert x \right\rvert^{2}}{4t} \right), \qquad x \in \mathbb{R}^{n},
\end{align*}
$\ast$ is the convolution in $\mathbb{R}^{n}$, and $L^{q} \left( \mathbb{R}^{n} \right)$ is the standard Lebesgue space with the norm denoted by $\left\lVert {\,\cdot\,} \right\rVert_{q}$.
We also need the weighted $L^{1}$-space defined by
\begin{align*}
	L_{m}^{1} \left( \mathbb{R}^{n} \right) = \left\{ \varphi \in L^{1} \left( \mathbb{R}^{n} \right); {\,} \text{$x^{\alpha} \varphi \in L^{1} \left( \mathbb{R}^{n} \right)$ for all $\alpha \in \mathbb{Z}_{\geq 0}^{n}$ with $\left\lvert \alpha \right\rvert \leq m$} \right\},
\end{align*}
where $x^{\alpha} \varphi$ means the function $\mathbb{R}^{n} \ni x \mapsto x^{\alpha} \varphi \left( x \right) \in \mathbb{R}$.

We start with the most basic result on the existence and uniqueness of global solutions to \eqref{P} in the function space $X$ defined by
\begin{align*}
	X= \left( C \cap L^{\infty} \right) \left( \left[ 0, + \infty \right); L^{1} \left( \mathbb{R}^{n} \right) \right) \cap \left( C \cap L^{\infty} \right) \left( \left( 0, + \infty \right); L^{\infty} \left( \mathbb{R}^{n} \right) \right).
\end{align*}

\begin{prop} \label{pro:P_global}
	Let $p>p_{\mathrm{F}} \left( n \right)$.
	Then, there exists $\varepsilon_{0} >0$ such that for any $\varphi \in \left( L^{1} \cap L^{\infty} \right) \left( \mathbb{R}^{n} \right)$ with $\left\lVert \varphi \right\rVert_{1} + \left\lVert \varphi \right\rVert_{\infty} \leq \varepsilon_{0}$, \eqref{P} has a unique global solution $u \in X$, which satisfies
	\begin{align}
		\label{eq:P_decay}
		\sup_{q \in \left[ 1, + \infty \right]} \sup_{t \geq 0} \left( 1+t \right)^{\frac{n}{2} \left( 1- \frac{1}{q} \right)} \left\lVert u \left( t \right) \right\rVert_{q} <+ \infty.
	\end{align}
\end{prop}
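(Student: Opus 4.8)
The plan is to solve \eqref{P} through its Duhamel (mild) formulation
\[
	u(t) = e^{t \Delta} \varphi + \int_{0}^{t} e^{(t-s) \Delta} f \left( u(s) \right) \, ds
\]
by the contraction mapping principle in $X$, equipped with the norm
\[
	\left\lVert u \right\rVert_{X} := \sup_{t \geq 0} \left\lVert u(t) \right\rVert_{1} + \sup_{t \geq 0} \left( 1+t \right)^{\frac{n}{2}} \left\lVert u(t) \right\rVert_{\infty}.
\]
Since the interpolation inequality $\left\lVert u(t) \right\rVert_{q} \leq \left\lVert u(t) \right\rVert_{1}^{1/q} \left\lVert u(t) \right\rVert_{\infty}^{1-1/q}$ shows that a bound on $\left\lVert u \right\rVert_{X}$ already yields \eqref{eq:P_decay}, it suffices to control the two endpoint norms. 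The only analytic inputs are the smoothing estimates $\left\lVert e^{t \Delta} \psi \right\rVert_{q} \leq C_{r,q} t^{- \frac{n}{2} \left( \frac{1}{r} - \frac{1}{q} \right)} \left\lVert \psi \right\rVert_{r}$ for $1 \leq r \leq q \leq + \infty$ (Young's inequality together with the explicit form of $G_{t}$), the pointwise bound $\left\lvert f(\xi) \right\rvert \leq K \left\lvert \xi \right\rvert^{p}$ (take $\eta = 0$ in \eqref{eq:esti_f}), and its difference form $\left\lvert f(\xi) - f(\eta) \right\rvert \leq K \left( \left\lvert \xi \right\rvert^{p-1} + \left\lvert \eta \right\rvert^{p-1} \right) \left\lvert \xi - \eta \right\rvert$.

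For the linear part, $\left\lVert e^{t \Delta} \varphi \right\rVert_{1} \leq \left\lVert \varphi \right\rVert_{1}$, while $\left( 1+t \right)^{n/2} \left\lVert e^{t \Delta} \varphi \right\rVert_{\infty} \leq C \left( \left\lVert \varphi \right\rVert_{1} + \left\lVert \varphi \right\rVert_{\infty} \right)$ by using the $L^{1}$--$L^{\infty}$ bound for $t \geq 1$ and $\left\lVert \cdot \right\rVert_{\infty} \leq \left\lVert \varphi \right\rVert_{\infty}$ for $t \leq 1$; hence the linear term has $X$-norm $\leq C \varepsilon_{0}$. For the Duhamel term $N(u)(t) := \int_{0}^{t} e^{(t-s) \Delta} f(u(s)) \, ds$, the bound $\left\lVert f(u(s)) \right\rVert_{1} \leq K \left\lVert u(s) \right\rVert_{p}^{p}$ combined with $\left\lVert u(s) \right\rVert_{p} \leq \left\lVert u \right\rVert_{X} \left( 1+s \right)^{- \frac{n}{2} \left( 1 - \frac{1}{p} \right)}$ gives
\[
	\left\lVert N(u)(t) \right\rVert_{1} \leq K \int_{0}^{t} \left\lVert u(s) \right\rVert_{p}^{p} \, ds \leq K \left\lVert u \right\rVert_{X}^{p} \int_{0}^{+ \infty} \left( 1+s \right)^{- \frac{n}{2}(p-1)} \, ds,
\]
and the last integral is finite precisely because $p > p_{\mathrm{F}}(n)$ is equivalent to $\tfrac{n}{2}(p-1) > 1$. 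For the weighted $L^{\infty}$ bound one treats $t \leq 1$ directly (using $\left\lVert f(u(s)) \right\rVert_{\infty} \leq K \left\lVert u \right\rVert_{X}^{p}$) and for $t \geq 1$ splits $\int_{0}^{t} = \int_{0}^{t/2} + \int_{t/2}^{t}$: on $[0, t/2]$ one uses the $L^{1}$--$L^{\infty}$ estimate, $(t-s)^{-n/2} \leq C t^{-n/2}$, and the same convergent time integral; on $[t/2, t]$ one uses $\left\lVert e^{(t-s)\Delta} f(u(s)) \right\rVert_{\infty} \leq \left\lVert f(u(s)) \right\rVert_{\infty} \leq K \left\lVert u \right\rVert_{X}^{p} (1+s)^{-np/2}$ and the elementary inequality $1 - \tfrac{np}{2} \leq - \tfrac{n}{2}$ (again from $p > p_{\mathrm{F}}(n)$) to bound the whole contribution by $C t^{-n/2} \left\lVert u \right\rVert_{X}^{p}$. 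Altogether $\left\lVert N(u) \right\rVert_{X} \leq C_{\ast} \left\lVert u \right\rVert_{X}^{p}$ with $C_{\ast} = C_{\ast}(n, p, K)$; along the way the integrand of $N(u)$ is seen to be Bochner integrable, so $N(u)$ is well defined.

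The contraction estimate is structurally identical: from the difference form of \eqref{eq:esti_f} and H\"older's inequality, $\left\lVert f(u(s)) - f(v(s)) \right\rVert_{r} \leq K \left( \left\lVert u(s) \right\rVert_{pr}^{p-1} + \left\lVert v(s) \right\rVert_{pr}^{p-1} \right) \left\lVert u(s) - v(s) \right\rVert_{pr}$, and since this product still carries the full homogeneity $p$ in the decay weights, repeating the computation above yields $\left\lVert N(u) - N(v) \right\rVert_{X} \leq C_{\ast} \left( \left\lVert u \right\rVert_{X}^{p-1} + \left\lVert v \right\rVert_{X}^{p-1} \right) \left\lVert u - v \right\rVert_{X}$. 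Now fix $R > 0$ with $2 C_{\ast} R^{p-1} \leq \tfrac{1}{2}$ and then $\varepsilon_{0} > 0$ so small that $C \varepsilon_{0} \leq \tfrac{R}{2}$; the map $\Phi(u) := e^{t \Delta} \varphi + N(u)$ then sends the closed ball $\{ \left\lVert u \right\rVert_{X} \leq R \}$ into itself and is a contraction on it, producing a unique fixed point $u$, which satisfies \eqref{eq:P_decay}. That $u$ actually lies in $X$ (strong continuity in $t$ with values in $L^{1}$ on $[0, + \infty)$ and in $L^{\infty}$ on $(0, + \infty)$) follows from strong continuity of $\left( e^{t \Delta} \right)$ and dominated convergence for the Duhamel integral; uniqueness in all of $X$, not merely in the ball, follows by a standard local-in-time contraction argument, in which the length of the time interval itself supplies the small constant, combined with a continuation argument. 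I expect the main obstacle to be the weighted $L^{\infty}$ estimate for $N(u)$: the splitting at $t/2$ and the bookkeeping of decay exponents is exactly where super-criticality $p > p_{\mathrm{F}}(n)$ is indispensable, whereas once that estimate is secured the self-mapping, contraction, and regularity assertions are routine.
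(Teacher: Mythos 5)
Your proposal is correct and takes essentially the same route as the paper's proof in Appendix \ref{app:A}: a Banach fixed point argument in a ball of $X$ with the norm $\sup_{t \geq 0} \bigl( \left\lVert u \left( t \right) \right\rVert_{1} + \left( 1+t \right)^{\frac{n}{2}} \left\lVert u \left( t \right) \right\rVert_{\infty} \bigr)$, the splitting of the Duhamel integral at $t/2$ for the weighted $L^{\infty}$ bound (where $p>p_{\mathrm{F}} \left( n \right)$ gives the convergent time integral), and H\"{o}lder interpolation for intermediate $q$. The only minor deviation is uniqueness in all of $X$, which you handle by local-in-time contraction plus continuation while the paper does it in one stroke via the Gr\"{o}nwall lemma applied to $\left\lVert u \left( t \right) -v \left( t \right) \right\rVert_{1}$; both are routine and valid.
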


Although the above proposition is more or less well-known, we give the proof in Appendix \ref{app:A} to make this paper self-contained (see also \cite[Theorem 1.2]{Ishige-Kawakami-Kobayashi-1} and \cite[Theorem 20.15]{Quittner-Souplet}).
We remark that any additional conditions for the initial data such as nonnegativity or exponential decay at the far field are not supposed.

Based on Proposition \ref{pro:P_global}, we show that $L^{1}$-space with algebraic weights is invariant under the semilinear heat flow associated with \eqref{P}.

\begin{thm} \label{th:P_weight}
	Let $p>p_{\mathrm{F}} \left( n \right)$ and let $m \in \mathbb{Z}_{>0}$.
	Let $\varphi \in \left( L_{m}^{1} \cap L^{\infty} \right) \left( \mathbb{R}^{n} \right)$ satisfy $\left\lVert \varphi \right\rVert_{1} + \left\lVert \varphi \right\rVert_{\infty} \leq \varepsilon_{0}$ and let $u \in X$ be the global solution to \eqref{P} given in Proposition \ref{pro:P_global}.
	Then, $u \in C \left( \left[ 0, + \infty \right); L_{m}^{1} \left( \mathbb{R}^{n} \right) \right)$, and moreover there exists $C_{m} >0$ such that the estimate
	\begin{align}
		\sum_{\left\lvert \alpha \right\rvert =m} \left\lVert x^{\alpha} u \left( t \right) \right\rVert_{1} \leq C_{m} \left( 1+t^{\frac{m}{2}} \right)
	\end{align}
	holds for all $t \geq 0$.
\end{thm}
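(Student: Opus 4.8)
The plan is to work with the integral (Duhamel) formulation
\[
u(t) = e^{t\Delta}\varphi + \int_{0}^{t} e^{(t-s)\Delta} f(u(s))\, ds
\]
and to propagate the weight $x^{\alpha}$, $|\alpha| = m$, through both terms using an explicit commutation identity between $e^{t\Delta}$ and multiplication by monomials. Concretely, since $\partial_{\xi} G_t(x-y) = -\partial_{y} G_t(x-y)$ and $x = (x-y) + y$, one obtains by the binomial theorem an expansion of the form
\[
x^{\alpha} \big(e^{t\Delta}\psi\big)(x) = \sum_{\beta \le \alpha} \binom{\alpha}{\beta} \big(e^{t\Delta}\big[\,(\cdot)^{\beta}\psi\,\big]\big)(x)\cdot(\text{polynomial in }t\text{ of degree }\le \tfrac{|\alpha-\beta|}{2}) ,
\]
i.e. $x^{\alpha}$ acting after the semigroup equals lower-or-equal-order weights acting before the semigroup, with coefficients that are $t$-monomials whose degree is half the number of derivatives transferred. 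This is the ``direct and explicit computation'' advertised in the abstract, and it is the device that replaces the comparison principle.

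The argument then proceeds by induction on $m$. First I would treat the linear term: $\|x^{\alpha} e^{t\Delta}\varphi\|_{1}$ is controlled, via the commutation identity, by $\sum_{\beta\le\alpha} t^{(|\alpha|-|\beta|)/2}\|x^{\beta}\varphi\|_{1}$, which is $\le C(1+t^{m/2})$ since $\varphi \in L^{1}_{m}$. Next, for the Duhamel term I apply the same identity inside the time integral to $e^{(t-s)\Delta} f(u(s))$, producing terms
\[
\int_{0}^{t} (t-s)^{(|\alpha|-|\beta|)/2}\, \big\| x^{\beta} f(u(s))\big\|_{1}\, ds .
\]
Using \eqref{eq:esti_f} with $\eta = 0$ and $f(0)=0$, $|f(u(s))| \le K|u(s)|^{p}$, so $\|x^{\beta} f(u(s))\|_{1} \le K \|\,|x^{\beta/p}u(s)|^{p}\,\|_{1}$ — here it is cleaner to estimate $\|x^{\beta} f(u(s))\|_1 \le K\|x^{\beta} u(s)\|_1^{\theta}\|u(s)\|_{r}^{p-\theta}$ by Hölder, splitting the weight so that one factor carries a full weighted $L^{1}$ norm of order $|\beta|\le m$ (available from the induction hypothesis / the bootstrap) and the remaining factors are unweighted $L^{q}$ norms controlled by the decay estimate \eqref{eq:P_decay} from Proposition \ref{pro:P_global}. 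One must check that the resulting time integral $\int_{0}^{t}(t-s)^{a}(1+s)^{-b}\,ds$ with $a = (|\alpha|-|\beta|)/2 \le m/2$ and $b$ coming from the $L^{q}$-decay rates converges and grows no faster than $1+t^{m/2}$; this is where $p > p_{\mathrm F}(n)$ enters, since the supercriticality is exactly what makes the decay exponent $b$ large enough (in fact $b>1$ for the borderline term) to beat the $(1+s)$ factors and keep the integral bounded by $C(1+t^{m/2})$.

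After the quantitative bound is in place, continuity $u \in C([0,+\infty); L^{1}_{m}(\mathbb{R}^{n}))$ follows by a now-standard argument: the map $t \mapsto x^{\alpha}u(t)$ is realized as a sum of the continuous map $t\mapsto e^{t\Delta}(\text{weighted data})$ (continuity of the heat semigroup on $L^{1}$, plus the polynomial-in-$t$ coefficients) and a Duhamel integral whose integrand is, by the estimates just obtained together with $u \in X$, in $L^{1}_{\mathrm{loc}}$ in $s$ with values in $L^{1}(\mathbb{R}^{n})$; dominated convergence gives continuity of the integral term. One should also run a short preliminary approximation/a priori step to justify that $x^{\alpha}u(t) \in L^1$ in the first place (e.g. truncating the weight by $\min(|x|^{m},R)$, deriving the estimate uniformly in $R$, and passing to the limit by monotone convergence), so that all the manipulations above are legitimate rather than formal.

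The main obstacle I anticipate is the bookkeeping in the Duhamel estimate: one has to choose the Hölder split of $x^{\beta}f(u(s))$ and the exponents $q$ in \eqref{eq:P_decay} so that \emph{simultaneously} (i) the spatial norms are all finite and controlled by quantities already bounded (either the induction hypothesis for $|\beta|<m$, or, for $|\beta|=m$, the very quantity $\sum_{|\alpha|=m}\|x^{\alpha}u(t)\|_{1}$ one is trying to bound, which forces a Gronwall-type closing of the estimate), and (ii) the temporal exponent $b>1$ at the critical endpoint so the integral is globally bounded. Getting these two requirements to hold at once — especially handling the top-order $|\beta|=m$ contribution without circularity, presumably by absorbing it using the smallness $\varepsilon_0$ or by a Gronwall inequality in $t$ — is the delicate point; everything else is routine semigroup estimates and the explicit commutation formula.
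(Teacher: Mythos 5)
Your overall skeleton --- the Duhamel formula \eqref{I}, induction on $m$, transferring monomial weights through the heat semigroup, a Gr\"onwall closing of the top-order term (the kernel $(1+s)^{-\frac{n}{2}(p-1)}$ is integrable because $p>p_{\mathrm{F}}(n)$, so no smallness is actually needed to close), and a regularization step to justify finiteness --- is the same as the paper's (Section \ref{sec:P_weight}, built on Theorems \ref{th:heat_commutator}, \ref{th:heat_commutator_esti}, Remark \ref{rem:heat_remainder_esti} and Lemma \ref{lem:heat_weight_appro}). Two corrections, one minor and one substantive. Minor: your displayed ``commutation identity'' is not an identity; the correct relation (Theorem \ref{th:heat_commutator}) necessarily contains derivative terms such as $(-2t\partial)^{\beta}e^{t\Delta}x^{\gamma}\varphi$, and what your formula really yields at the level of $L^{1}$ norms is the crude bound \eqref{eq:2.12}, $\sum_{\lvert\alpha\rvert=m}\lVert x^{\alpha}e^{t\Delta}\varphi\rVert_{1}\lesssim \lVert\,\lvert x\rvert^{m}\varphi\rVert_{1}+t^{m/2}\lVert\varphi\rVert_{1}$. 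Also no H\"older splitting of $x^{\beta}f(u(s))$ is needed: $\lVert x^{\beta}f(u(s))\rVert_{1}\le K\lVert u(s)\rVert_{\infty}^{p-1}\lVert x^{\beta}u(s)\rVert_{1}$ together with \eqref{eq:P_decay} and the induction hypothesis already produces the right time integrals.

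The substantive gap is the a priori step you relegate to a ``short preliminary approximation''. Gr\"onwall is legitimate only if the Gr\"onwall variable is finite, so the whole Duhamel estimate must be run for a truncated weight with constants uniform in the truncation, keeping the top-order term in the form $e^{(t-s)\Delta}\bigl[(\text{truncated weight})\,f(u(s))\bigr]$ (whose $L^{1}$ norm is $\lesssim(1+s)^{-\frac{n}{2}(p-1)}$ times the Gr\"onwall variable) and all remaining terms controlled by weights of order at most $m-1$ applied to quantities already under control. Your suggestion $\min(\lvert x\rvert^{m},R)$ does not achieve this: a commutator bound in the spirit of Lemma \ref{lem:heat_weight_appro} requires $W^{2,\infty}$ control that this truncation does not provide uniformly in $R$ (for $m\ge2$ its Lipschitz constant grows with $R$), and majorizing the truncated weight by the full monomial on the right-hand side reintroduces $\lVert\,\lvert x\rvert^{m}f(u(s))\rVert_{1}$ --- exactly the not-yet-finite quantity --- outside the Gr\"onwall variable; the crude bound \eqref{eq:2.12} transfers the whole weight onto $f(u(s))$ and therefore cannot close the induction. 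The missing idea is precisely the paper's device: write $x^{\alpha'}=x_{j}x^{\alpha}$ with $\lvert\alpha\rvert=m$, commute the monomial part exactly (Theorem \ref{th:heat_commutator}), use the refined fact that even after multiplication by the extra factor $x_{j}$ the remainder $x_{j}R_{\alpha}(t-s)f(u(s))$ involves only weights of order at most $m$ of $f(u(s))$ (estimate \eqref{eq:2.11} and Remark \ref{rem:heat_remainder_esti}; this gain of \eqref{eq:2.11} over \eqref{eq:2.12} is what the paper calls the crucial point), and regularize only the degree-one factor by $w_{j,\varepsilon}(x)=x_{j}e^{-\varepsilon\lvert x\rvert^{2}}$, for which $\lVert\nabla w_{j,\varepsilon}\rVert_{\infty}\le2$ uniformly and $\lVert\Delta w_{j,\varepsilon}\rVert_{\infty}\lesssim\varepsilon^{1/2}$, so that Lemma \ref{lem:heat_weight_appro} gives a commutator bound of order $\varepsilon^{1/2}t+t^{1/2}$ surviving $\varepsilon\searrow0$. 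Then $\lVert w_{j,\varepsilon}x^{\alpha}u(t)\rVert_{1}$ is finite by the induction hypothesis, Gr\"onwall applies, Fatou gives $x^{\alpha'}u(t)\in L^{1}$ with the bound $C(1+t^{(m+1)/2})$, and continuity follows from the exact representation of $x^{\alpha'}u(t)$. Without this structure your outline remains formal at its decisive point.
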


The next theorem describes the large time behavior of the global solution to \eqref{P} given in Proposition \ref{pro:P_global}.

\begin{thm} \label{th:P_appro}
	Let $p>p_{\mathrm{F}} \left( n \right)$.
	Let $\varphi \in \left( L^{1} \cap L^{\infty} \right) \left( \mathbb{R}^{n} \right)$ satisfy $\left\lVert \varphi \right\rVert_{1} + \left\lVert \varphi \right\rVert_{\infty} \leq \varepsilon_{0}$ and let $u \in X$ be the global solution to \eqref{P} given in Proposition \ref{pro:P_global}.
	Then, for any $N \in \mathbb{Z}_{>0}$ and $q \in \left[ 1, + \infty \right]$, there exists $C_{N, q} >0$ such that the estimates
	\begin{align}
		t^{\frac{n}{2} \left( 1- \frac{1}{q} \right)} \left\lVert u \left( t \right) -u_{N} \left( t \right) \right\rVert_{q} \leq \begin{dcases}
			C_{N, q} t^{-N \sigma} &\quad \text{if} \quad 0<N \sigma <1, \\
			C_{N, q} t^{-1} \log \left( 1+t \right) &\quad \text{if} \quad N \sigma =1, \\
			C_{N, q} t^{-1} &\quad \text{if} \quad N \sigma >1
		\end{dcases}
	\end{align}
	hold for all $t>2^{N-1}$, where
	\begin{align*}
		\sigma &\coloneqq \frac{n}{2} \left( p-1 \right) -1>0, \\
		u_{N} \left( t \right) &\coloneqq \begin{dcases}
			e^{t \Delta} \varphi_{1} &\quad \text{for} \quad N=1, \\
			e^{t \Delta} \varphi_{N} + \int_{0}^{t} e^{\left( t-s \right) \Delta} f \left( u_{N-1} \left( s \right) \right) ds &\quad \text{for} \quad N \geq 2,
		\end{dcases} \\
		\varphi_{N} &\coloneqq \begin{dcases}
			\varphi + \int_{0}^{+ \infty} f \left( u \left( s \right) \right) ds &\quad \text{for} \quad N=1, \\
			\varphi + \int_{0}^{+ \infty} \left( f \left( u \left( s \right) \right) -f \left( u_{N-1} \left( s \right) \right) \right) ds &\quad \text{for} \quad N \geq 2.
		\end{dcases}
	\end{align*}
\end{thm}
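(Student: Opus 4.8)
\noindent\textbf{Proof strategy for Theorem~\ref{th:P_appro}.}
The plan is to argue by induction on $N$, carrying at each step two auxiliary facts as well: that $\varphi_N \in \left( L^1 \cap L^\infty \right) \left( \mathbb{R}^n \right)$, and that $u_N$ satisfies the same bound \eqref{eq:P_decay} as $u$; both are needed to make sense of $\varphi_{N+1}$ and to run the next step. Put $g_0(s) \coloneqq f \left( u(s) \right)$ and $g_{N-1}(s) \coloneqq f \left( u(s) \right) - f \left( u_{N-1}(s) \right)$ for $N \geq 2$. Comparing the Duhamel formula for $u$ with the definitions of $u_N$ and $\varphi_N$ and using $e^{t\Delta} \int_0^{+\infty} g_{N-1}(s) \, ds = \int_0^t e^{t\Delta} g_{N-1}(s) \, ds + e^{t\Delta} \int_t^{+\infty} g_{N-1}(s) \, ds$, one obtains the identity
\begin{align*}
	u(t) - u_N(t) = \int_0^t \left( e^{(t-s)\Delta} - e^{t\Delta} \right) g_{N-1}(s) \, ds - e^{t\Delta} \int_t^{+\infty} g_{N-1}(s) \, ds ,
\end{align*}
and the whole argument reduces to estimating the two terms on the right in $\left\lVert \cdot \right\rVert_q$.

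The elementary ingredients are as follows. From \eqref{eq:esti_f} with $\eta = 0$ and Hölder's inequality one has, for $r \in \left[ 1, +\infty \right]$,
\begin{align*}
	\left\lVert g_{N-1}(s) \right\rVert_r \leq K \left( \left\lVert u(s) \right\rVert_{pr}^{p-1} + \left\lVert u_{N-1}(s) \right\rVert_{pr}^{p-1} \right) \left\lVert u(s) - u_{N-1}(s) \right\rVert_{pr} ,
\end{align*}
so \eqref{eq:P_decay} (for $u$ and, inductively, for $u_{N-1}$) together with the induction hypothesis applied to the last factor gives, for $s > 2^{N-2}$, $\left\lVert g_{N-1}(s) \right\rVert_r \lesssim \left( 1+s \right)^{-\frac n2 \left( p - \frac 1r \right)} R_{N-1}(s)$, where $R_{N-1}$ is the rate produced at step $N-1$ (with $R_0 \equiv 1$, and the last factor above read as $\left\lVert u(s) \right\rVert_{pr}$ when $N = 1$); for $s$ near $0$ the left side is merely bounded. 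Since $\frac n2 \left( p - 1 \right) = 1 + \sigma > 1$ and $\frac n2 p > 1$, the cases $r = 1$ and $r = +\infty$ make $\int_0^{+\infty} g_{N-1}(s) \, ds$ converge in $L^1 \cap L^\infty$, whence $\varphi_N \in L^1 \cap L^\infty$, and then $u_N = e^{t\Delta}\varphi_N + \int_0^t e^{(t-s)\Delta} f \left( u_{N-1}(s) \right) ds$ inherits \eqref{eq:P_decay} by the Duhamel estimate used for Proposition~\ref{pro:P_global} in Appendix~\ref{app:A}. On the semigroup side I would use $\left\lVert e^{t\Delta} \psi \right\rVert_q \leq \left( 4\pi t \right)^{-\frac n2 \left( 1 - \frac 1q \right)} \left\lVert \psi \right\rVert_1$, the contractivity $\left\lVert e^{\tau\Delta} \psi \right\rVert_q \leq \left\lVert \psi \right\rVert_q$, and, from $e^{(t-s)\Delta} - e^{t\Delta} = - \int_{t-s}^t \Delta e^{\tau\Delta} \, d\tau$ with $\left\lVert \Delta e^{\tau\Delta} \psi \right\rVert_q \lesssim \tau^{-1-\frac n2 \left( 1 - \frac 1q \right)} \left\lVert \psi \right\rVert_1$, the bound $\left\lVert \left( e^{(t-s)\Delta} - e^{t\Delta} \right)\psi \right\rVert_q \lesssim s \, t^{-1-\frac n2 \left( 1 - \frac 1q \right)} \left\lVert \psi \right\rVert_1$ for $0 < s \leq t/2$.

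The main estimate then splits $\int_0^t = \int_0^{t/2} + \int_{t/2}^t$ in the first term. On $\left[ 0, t/2 \right]$ the semigroup-difference bound gives a contribution $\lesssim t^{-1-\frac n2 \left( 1 - \frac 1q \right)} \int_0^{t/2} s \left\lVert g_{N-1}(s) \right\rVert_1 \, ds$; cutting the $s$-integral at $2^{N-2}$ (the small-$s$ part harmless precisely because $t > 2^{N-1}$) and inserting $\left\lVert g_{N-1}(s) \right\rVert_1 \lesssim \left( 1+s \right)^{-1-\sigma} R_{N-1}(s)$ brings in $\int_{2^{N-2}}^{t/2} s^{-N\sigma} \, ds$ when $R_{N-1}(s) = s^{-(N-1)\sigma}$, which is exactly where the three regimes $N\sigma < 1$, $N\sigma = 1$, $N\sigma > 1$, and with them the logarithm and the saturation at $t^{-1}$, appear. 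On $\left[ t/2, t \right]$ the two pieces are treated separately: by contractivity of $e^{(t-s)\Delta}$ on $L^q$, $\left\lVert \int_{t/2}^t e^{(t-s)\Delta} g_{N-1}(s) \, ds \right\rVert_q \leq \int_{t/2}^t \left\lVert g_{N-1}(s) \right\rVert_q \, ds \lesssim t^{-\frac n2 \left( 1 - \frac 1q \right) - \sigma} R_{N-1}(t)$, picking up one power of $t$ from the interval length and using $1 - \frac n2 \left( p - \frac 1q \right) = - \frac n2 \left( 1 - \frac 1q \right) - \sigma$, while $\int_{t/2}^t e^{t\Delta} g_{N-1}(s) \, ds$ is bounded the same way by $L^1 \to L^q$ smoothing; finally $e^{t\Delta} \int_t^{+\infty} g_{N-1}(s) \, ds$ is $\lesssim t^{-\frac n2 \left( 1 - \frac 1q \right)} \int_t^{+\infty} \left( 1+s \right)^{-1-\sigma} R_{N-1}(s) \, ds$. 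Since $t^{-\sigma} R_{N-1}(t) \lesssim R_N(t)$ in each regime — $R_N(t)$ denoting $t^{-N\sigma}$, $t^{-1}\log \left( 1+t \right)$, or $t^{-1}$ as appropriate — every term is $\lesssim t^{-\frac n2 \left( 1 - \frac 1q \right)} R_N(t)$ for $t > 2^{N-1}$, closing the induction.

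The main obstacle is not any single estimate but the bookkeeping of the three regimes and checking that the previous rate $R_{N-1}$ feeds correctly into $R_N$ — in particular, that a rate already at $t^{-1}$ (or carrying a logarithm) still yields $t^{-1}$ at the next step, and that the borderline $N\sigma = 1$ is met precisely on the $\left[ 0, t/2 \right]$ piece. Secondary points are the restriction $t > 2^{N-1}$, forced by the fact that the induction hypothesis is available only for $s > 2^{N-2}$, and the need to carry the auxiliary claims $\varphi_N \in L^1 \cap L^\infty$ and \eqref{eq:P_decay} for $u_N$ through the induction, since the very definition of $\varphi_{N+1}$ and the decay estimate for $g_N$ rely on them.
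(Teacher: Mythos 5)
Your proposal is correct and follows essentially the same route as the paper: induction on $N$ with the $N=1$ case done first, the Duhamel comparison $u-u_N$ split at $t/2$ into a semigroup-difference piece (bounded via $\Delta e^{\tau\Delta}$ smoothing), a near-$t$ piece, and a tail piece, combined with \eqref{eq:esti_f}, the decay of $u$ and (inductively) of $u_N$ as in Lemma \ref{lem:P_appro_decay}, and the same cut of the $s$-integral at the dyadic threshold to invoke the induction hypothesis, yielding the three regimes in $N\sigma$. The only differences are cosmetic (grouping the $e^{t\Delta}\int_{t/2}^{t}$ contribution with the tail versus keeping it with the middle piece, and writing the semigroup difference as $-\int_{t-s}^{t}\Delta e^{\tau\Delta}\,d\tau$ instead of the $\theta$-parametrization).
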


Alternatively, the sequence $\left( u_{N}; N \in \mathbb{Z}_{\geq 0} \right)$ in $X$ is introduced recursively by $u_{0} \equiv 0$ and
\begin{align}
	\label{eq:linear_appro}
	\begin{cases}
		\partial_{t} u_{N} - \Delta u_{N} =f \left( u_{N-1} \right), &\quad \left( t, x \right) \in \left( 0, + \infty \right) \times \mathbb{R}^{n}, \\
		u_{N} \left( 0 \right) = \varphi_{N}, &\quad x \in \mathbb{R}^{n}
	\end{cases}
\end{align}
for $N \geq 1$.
In addition, it follows from Proposition \ref{pro:P_global} that
\begin{align*}
	\sup_{q \in \left[ 1, + \infty \right]} \sup_{t \geq 0} \left( 1+t \right)^{\frac{n}{2} \left( 1- \frac{1}{q} \right)} \left\lVert u_{N} \left( t \right) \right\rVert_{q} <+ \infty
\end{align*}
for any $N \in \mathbb{Z}_{>0}$ (see Lemma \ref{lem:P_appro_decay} in Section \ref{sec:4}).
Hence, Theorem \ref{th:P_appro} means that the large time behavior of the global solution to \eqref{P} is approximated by that of the solution to the Cauchy problem for the linear heat equation \eqref{eq:linear_appro}.
In particular, the larger $N \in \mathbb{Z}_{>0}$ is, the faster the remainder vanishes up to $t^{-1}$.
We also emphasize that we do not need weighted $L^{1}$-spaces in Theorem \ref{th:P_appro}.
The sequences $\left( \varphi_{N}; N \in \mathbb{Z}_{>0} \right)$ and $\left( u_{N}; N \in \mathbb{Z}_{>0} \right)$ are constructed as follows.
For $N=1$, we define $u_{1}$ and $\varphi_{1}$ by inference from the well-studied fact that
\begin{gather*}
	\lim_{t \to + \infty} t^{\frac{n}{2} \left( 1- \frac{1}{q} \right)} \left\lVert u \left( t \right) - \left( \int_{\mathbb{R}^{n}} \varphi_{1} \left( y \right) dy \right) G_{t} \right\rVert_{q} =0, \\
	\lim_{t \to + \infty} t^{\frac{n}{2} \left( 1- \frac{1}{q} \right)} \left\lVert e^{t \Delta} \varphi_{1} - \left( \int_{\mathbb{R}^{n}} \varphi_{1} \left( y \right) dy \right) G_{t} \right\rVert_{q} =0
\end{gather*}
hold for any $q \in \left[ 1, + \infty \right]$ under suitable assumptions (see \cite{Gmira-Veron, Kawanago1996, Kawanago1997, Taskinen, Ishige-Kawakami2009, Quittner-Souplet, Ishige-Ishiwata-Kawakami, Ishige-Kawakami2012} and Proposition \ref{pro:heat_asymptotics_0} in Section \ref{sec:Heat_semigroup}).
For $N \geq 2$, we set $\varphi_{N}$ to make the difference $u \left( t \right) -u_{N-1} \left( t \right)$ in the decomposition of the difference $u \left( t \right) -u_{N} \left( t \right)$, where $u_{N}$ is constructed by the iteration.

The following theorem provides us with explicit self-similar profiles of the global solution to \eqref{P} given in Proposition \ref{pro:P_global} with explicit remainder estimates.

\begin{thm} \label{th:P_aymptotics}
	Let $m \in \left\{ 0, 1 \right\}$ and let $p>1+ \left( 3+m \right) /n$.
	Let $\varphi \in \left( L_{m+1}^{1} \cap L^{\infty} \right) \left( \mathbb{R}^{n} \right)$ satisfy $\left\lVert \varphi \right\rVert_{1} + \left\lVert \varphi \right\rVert_{\infty} \leq \varepsilon_{0}$ and let $u \in X$ be the global solution to \eqref{P} given in Proposition \ref{pro:P_global}.
	Then, for any $q \in \left[ 1, + \infty \right]$, there exists $C_{q} >0$ such that the estimates
	\begin{alignat}{2}
		\label{eq:P_asymptotics_0}
		&t^{\frac{n}{2} \left( 1- \frac{1}{q} \right)} \left\lVert u \left( t \right) -c_{0} \delta_{t} G_{1} \right\rVert_{q} \leq C_{q} t^{- \frac{1}{2}} &&\quad \text{for} \quad m=0, \\
		\label{eq:P_asymptotics_1}
		&t^{\frac{n}{2} \left( 1- \frac{1}{q} \right)} {\,} \biggl\lVert u \left( t \right) -c_{0} \delta_{t} G_{1} - \frac{1}{2} t^{- \frac{1}{2}} \sum_{j=1}^{n} c_{j} \delta_{t} \left( x_{j} G_{1} \right) \biggr\rVert_{q} \leq C_{q} t^{-1} &&\quad \text{for} \quad m=1
	\end{alignat}
	hold for all $t>1$, where
	\begin{gather*}
		c_{0} \coloneqq \int_{\mathbb{R}^{n}} \varphi_{1} \left( y \right) dy, \qquad c_{j} \coloneqq \int_{\mathbb{R}^{n}} y_{j} \varphi_{1} \left( y \right) dy, \\
		\varphi_{1} \coloneqq \varphi + \int_{0}^{+ \infty} f \left( u \left( s \right) \right) ds
	\end{gather*}
	and $\delta_{t}$ is the dilation acting on functions $\psi$ on $\mathbb{R}^{n}$ as
	\begin{align*}
		\left( \delta_{t} \psi \right) \left( x \right) =t^{- \frac{n}{2}} \psi \left( t^{- \frac{1}{2}} x \right), \qquad x \in \mathbb{R}^{n}.
	\end{align*}
\end{thm}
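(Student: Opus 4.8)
We split $u(t)$ minus the claimed profile as $(u(t)-e^{t\Delta}\varphi_1)+(e^{t\Delta}\varphi_1-(\text{profile}))$: the first difference is governed by Theorem \ref{th:P_appro} with $N=1$, while the second is the remainder in the order-$m$ asymptotic expansion of the linear heat semigroup. Indeed, using the elementary identities $\delta_tG_1=G_t$ and $\frac12t^{-1/2}\delta_t(x_jG_1)=-\partial_jG_t$, the function subtracted from $u(t)$ in \eqref{eq:P_asymptotics_0}--\eqref{eq:P_asymptotics_1} is exactly $\sum_{|\alpha|\le m}\frac{(-1)^{|\alpha|}}{\alpha!}\bigl(\int_{\mathbb{R}^{n}}y^\alpha\varphi_1(y)\,dy\bigr)\partial^\alpha G_t$. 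The only genuinely new input needed is that $\varphi_1\in L^1_{m+1}(\mathbb{R}^{n})$, and this is precisely where the stronger hypothesis $p>1+(3+m)/n$ is used.

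To see $\varphi_1=\varphi+\int_0^{+\infty}f(u(s))\,ds\in L^1_{m+1}(\mathbb{R}^{n})$, note that $f(0)=0$ and \eqref{eq:esti_f} give $|f(\xi)|\le K|\xi|^p$, so $\lVert x^\alpha f(u(s))\rVert_1\le K\lVert u(s)\rVert_\infty^{p-1}\lVert x^\alpha u(s)\rVert_1$ for every multi-index $\alpha$. Combining the $L^\infty$-decay $\lVert u(s)\rVert_\infty\lesssim(1+s)^{-n/2}$ from \eqref{eq:P_decay} with the weighted bound $\lVert x^\alpha u(s)\rVert_1\lesssim(1+s)^{|\alpha|/2}$ from Theorem \ref{th:P_weight} yields $\lVert x^\alpha f(u(s))\rVert_1\lesssim(1+s)^{-\frac n2(p-1)+\frac{|\alpha|}2}$, which is integrable over $(0,+\infty)$ exactly when $p>1+(2+|\alpha|)/n$. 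For $|\alpha|\le m+1$ this holds under the stated hypothesis, the top-order weight $|\alpha|=m+1$ being the binding constraint. Hence $\int_0^{+\infty}f(u(s))\,ds$ converges in $L^1_{m+1}$, so $\varphi_1\in L^1_{m+1}$ and $c_0,c_j$ are finite.

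For the first difference, Theorem \ref{th:P_appro} with $N=1$ gives $t^{\frac n2(1-1/q)}\lVert u(t)-e^{t\Delta}\varphi_1\rVert_q\lesssim t^{-\sigma}$, $t^{-1}\log(1+t)$, or $t^{-1}$ according as $\sigma<1$, $\sigma=1$, or $\sigma>1$; since $\sigma=\frac n2(p-1)-1>\frac{m+1}2$, this is $\lesssim t^{-1/2}$ for $t>1$ when $m=0$ (absorbing the logarithm in the borderline case $\sigma=1$) and $\lesssim t^{-1}$ when $m=1$. For the second difference, Taylor-expanding $G_t(x-y)$ in $y$ about $0$ to order $m$ with integral remainder, integrating against $\varphi_1$, and using $\lVert\partial^\beta G_t\rVert_q\lesssim t^{-\frac n2(1-1/q)-|\beta|/2}$ (a higher-order form of Proposition \ref{pro:heat_asymptotics_0}) shows that $t^{\frac n2(1-1/q)}$ times the $L^q$-norm of $e^{t\Delta}\varphi_1$ minus its order-$m$ truncated expansion is $\lesssim t^{-(m+1)/2}\lVert\varphi_1\rVert_{L^1_{m+1}}$, i.e. $\lesssim t^{-1/2}$ for $m=0$ and $\lesssim t^{-1}$ for $m=1$. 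The triangle inequality, together with the identification of the truncated expansion with the profiles in \eqref{eq:P_asymptotics_0}--\eqref{eq:P_asymptotics_1} via the identities above, then finishes the proof.

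The step I expect to be the main obstacle is the verification $\varphi_1\in L^1_{m+1}$: one has to offset the polynomial-in-time \emph{growth} of the weighted norms $\lVert x^\alpha u(s)\rVert_1$ coming from Theorem \ref{th:P_weight} against the $(1+s)^{-n/2}$ \emph{decay} of $\lVert u(s)\rVert_\infty$, so that the integral defining $\varphi_1$ converges in the top-order weighted space, and it is exactly the top weight $|\alpha|=m+1$ that forces $p>1+(3+m)/n$. The remaining work is routine bookkeeping of scaling exponents, the only mild subtlety being the borderline $\sigma=1$ case of Theorem \ref{th:P_appro} (which can occur for $m=0$), whose logarithmic factor must be absorbed into the clean $t^{-1/2}$ rate.
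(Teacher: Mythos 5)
Your proposal is correct and follows essentially the same route as the paper: the same decomposition $u(t)-\text{profile}=(u(t)-e^{t\Delta}\varphi_{1})+(e^{t\Delta}\varphi_{1}-\text{profile})$, the same verification that $\varphi_{1}\in L_{m+1}^{1}$ via $\lVert x^{\alpha}f(u(s))\rVert_{1}\lesssim (1+s)^{-\frac{n}{2}(p-1)+\frac{\lvert\alpha\rvert}{2}}$ (which is exactly where $p>1+(3+m)/n$ enters), Theorem \ref{th:P_appro} with $N=1$ using $\sigma>\frac{m+1}{2}$, and the order-$m$ Taylor expansion of the heat kernel, which is precisely Proposition \ref{pro:heat_asymptotics_higher} that the paper cites (your identities $\delta_{t}G_{1}=G_{t}$ and $\tfrac{1}{2}t^{-1/2}\delta_{t}(x_{j}G_{1})=-\partial_{j}G_{t}$ match the paper's Hermite-polynomial formulation). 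The only difference is that you re-derive the linear expansion in place of invoking that proposition, which changes nothing of substance.
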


\begin{rem}
	Theorems \ref{th:P_weight}, \ref{th:P_appro} and \ref{th:P_aymptotics} hold for global solutions to \eqref{P} in $X$ satisfying \eqref{eq:P_decay} without the smallness assumption for the initial data.
	However, it is known that solutions to \eqref{P} do not necessarily satisfy \eqref{eq:P_decay}.
	In fact, \cite{Kawanago1996} showed that if $f \left( \xi \right) = \xi^{p}$ with $p>p_{\mathrm{F}} \left( n \right)$ and the initial data is nonnegative and sufficiently large, then the solution to \eqref{P} does not satisfy \eqref{eq:P_decay} and even it can be blow up in finite time.
\end{rem}

There is a large literature on asymptotic expansions of global solutions to semilinear heat equations.
For example, the $0$-th order asymptotic expansion like \eqref{eq:P_asymptotics_0} was obtained by \cite{Gmira-Veron, Kawanago1996, Kawanago1997, Taskinen, Ishige-Kawakami2009, Quittner-Souplet} with various methods, and higher order asymptotic expansions were given by \cite{Ishige-Ishiwata-Kawakami, Ishige-Kawakami2012, Ishige-Kawakami2013, Ishige-Kawakami-Kobayashi-2, Ishige-Kawakami-Michihisa}.
Therefore, the above main results seem to be well-known except for Theorem \ref{th:P_appro}, but the novelties of this paper lie in the method of the proofs.
In \cite{Ishige-Ishiwata-Kawakami}, the authors introduced $L^{1}$-decay estimates of the heat semigroup to derive a classification of global solutions to \eqref{P} in terms of decay rate in $t$ of their $L^{1}$-norm with a regularity assumption for the nonlinear term.
This method was improved by \cite{Ishige-Kawakami2012} to obtain higher order asymptotic expansions in the case where $p>p_{\mathrm{F}} \left( n \right)$ at the expense of the parabolic self-similarity of asymptotic profiles and generalized by \cite{Ishige-Kawakami2013, Ishige-Kawakami-Kobayashi-2, Ishige-Kawakami-Michihisa} to apply other semilinear parabolic equations.
This method is also valid to obtain asymptotic expansions of global solutions to semilinear damped wave equations \cite{Kawakami-Ueda, Kawakami-Takeda}.
Therefore, the method introduced and improved by \cite{Ishige-Ishiwata-Kawakami, Ishige-Kawakami2012} is powerful, while it seems to be the only method to obtain not only higher order asymptotic expansions but also the first order asymptotic expansion such as \eqref{eq:P_asymptotics_1}.

In this paper, we show Theorem \ref{th:P_aymptotics} by using the linear approximation given in Theorem \ref{th:P_appro} and asymptotic expansions of the heat semigroup with Hermite polynomials (see Proposition \ref{pro:heat_asymptotics_higher} in Section \ref{sec:Heat_semigroup}).
We emphasize that the asymptotic profiles in Theorem \ref{th:P_aymptotics} have the parabolic self-similarity in the sense that each term has the form of a constant multiple of dilated functions on $\mathbb{R}^{n}$ (with decay factor in $t$).
Comparing Theorem \ref{th:P_aymptotics} and the asymptotic expansions of the heat semigroup, we see that the global solution has the same asymptotic profiles as for the heat semigroup with the initial data $\varphi_{1}$ if $m \in \left\{ 0, 1 \right\}$.
We also see that the solution asymptotically approaches to the same profiles as for the heat semigroup with the initial data $\varphi_{1}$ even if $m \in \mathbb{Z}_{\geq 0}$ with $m \geq 2$.
However, it is not suitable to consider them as higher order asymptotic expansions of the global solution in terms of the decay rates of the remainders (for details, see Remark \ref{rem:P_asymptotics_higher} in Section \ref{sec:4}).

Furthermore, we need weighted $L^{1}$-estimates of the global solution to \eqref{P} as in Theorem \ref{th:P_weight} to ensure the finiteness of constants $c_{0}, c_{1}, \ldots, c_{n}$.
As far as we know, there are two methods to derive the weighted estimates.
One is to attribute the weighted estimates to those of the heat semigroup by using the comparison principle.
The other is to approximate the global solution by solutions to linear heat equations via an iteration argument with the Ascoli-Arzel\`{a} theorem (see \cite[Lemma 3.1]{Ishige-Ishiwata-Kawakami} and \cite[Theorem 1.2]{Ishige-Kawakami-Kobayashi-1}, respectively).
We note that the comparison principle which we use in the first method is proved by a computation of the energy for a positive or negative part of the difference of solutions to heat equations (see \cite[Proposition 52.10]{Quittner-Souplet}).
Therefore, to obtain the weighted estimates by using the above methods, we have to verify the regularity or uniqueness of global solutions to \eqref{P} at the same time.
By the way, the global solution given in Proposition \ref{pro:P_global} is constructed by a contraction argument for the following integral equation associated with \eqref{P}:
\begin{align*}
	\tag{I} \label{I}
	u \left( t \right) &=e^{t \Delta} \varphi + \int_{0}^{t} e^{\left( t-s \right) \Delta} f \left( u \left( s \right) \right) ds.
\end{align*}
Hence, it is a natural question from the view point of a priori estimates to consider what properties the solution constructed by the contraction argument has in the framework of the integral equation without reconstruction (approximation) of the global solution.
We show Theorem \ref{th:P_weight} by direct and explicit calculations with the aid of commutation relations and their estimates between the heat semigroup and monomial weights in $\mathbb{R}^{n}$ given by Theorems \ref{th:heat_commutator} and \ref{th:heat_commutator_esti} in Section \ref{sec:Heat_semigroup}.
Since we do not use the standard parabolic arguments which rely on the comparison principle or some compactness arguments, our method enables us to discuss independently the well-posedness and a priori estimates for \eqref{P}.
Therefore, our method is available to not only parabolic equations but also some models with dispersion which cannot be applied the comparison principle to; for example, the following complex Ginzburg-Landau type equation:
\begin{align*}
	\partial_{t} u- \left( \lambda +i \alpha \right) \Delta u+ \left( \kappa +i \beta \right) \left\lvert u \right\rvert^{q-1} u- \gamma u=0,
\end{align*}
where $u \colon \left[ 0, + \infty \right) \times \mathbb{R}^{n} \to \mathbb{C}$ is an unknown function and $\alpha, \beta, \gamma, \kappa \in \mathbb{R}$, $\lambda >0$, $q \in \left[ 1, + \infty \right]$ are given parameters (cf. \cite{Shimotsuma-Yokota-Yoshii2014, Shimotsuma-Yokota-Yoshii2016}).
Furthermore, our method using commutation relations helps us to understand nonlinear parabolic equations with fundamental techniques in calculus.
From these observation, our approach would have an advantage over the previous studies.
The results on new higher order asymptotic expansions for the complex Ginzburg-Landau type equation which includes \eqref{P} in a special case will be discussed in the forthcoming paper.

This paper is organized as follows.
In Section \ref{sec:Heat_semigroup}, we introduce basic estimates and asymptotic expansions in the linear case.
In Section \ref{sec:P_weight}, we prove Theorem \ref{th:P_weight}.
In Section \ref{sec:4}, we prove Theorems \ref{th:P_appro} and \ref{th:P_aymptotics}.

\section{Heat semigroup} \label{sec:Heat_semigroup}
In this section, we introduce basic estimates and asymptotic expansions of the heat semigroup.
See \cite{Giga-Giga-Saal, Quittner-Souplet} for the standard notion and notation of the subject.
To begin with, we prepare some notation.
Let $\mathbb{Z}_{>0}$ be the set of positive integers and let $\mathbb{Z}_{\geq 0} \coloneqq \mathbb{Z}_{>0} \cup \left\{ 0 \right\}$.
For $\alpha = \left( \alpha_{1}, \ldots, \alpha_{n} \right) \in \mathbb{Z}_{\geq 0}^{n}$ and $x= \left( x_{1}, \ldots, x_{n} \right) \in \mathbb{R}^{n}$, we define
\begin{align*}
	\left\lvert \alpha \right\rvert \coloneqq \sum_{j=1}^{n} \alpha_{j}, \qquad \alpha ! \coloneqq \prod_{j=1}^{n} \alpha_{j} !, \qquad x^{\alpha} \coloneqq \prod_{j=1}^{n} x_{j}^{\alpha_{j}}, \qquad \partial^{\alpha} = \partial_{x}^{\alpha} \coloneqq \prod_{j=1}^{n} \partial_{j}^{\alpha_{j}}, \qquad \partial_{j} \coloneqq \frac{\partial}{\partial x_{j}}.
\end{align*}
For $\alpha = \left( \alpha_{1}, \ldots, \alpha_{n} \right)$, $\beta = \left( \beta_{1}, \ldots, \beta_{n} \right) \in \mathbb{Z}_{\geq 0}^{n}$, $\alpha \leq \beta$ means that $\alpha_{j} \leq \beta_{j}$ holds for any $j \in \left\{ 1, \ldots, n \right\}$.
Furthermore, we write
\begin{align*}
	\binom{\alpha}{\beta} \coloneqq \begin{dcases}
		\frac{\alpha !}{\beta ! \left( \alpha - \beta \right) !} = \prod_{j=1}^{n} \frac{\alpha_{j} !}{\beta_{j} ! \left( \alpha_{j} - \beta_{j} \right) !}, &\qquad \beta \leq \alpha, \\
		0, &\qquad \text{otherwise}.
	\end{dcases}
\end{align*}

The heat semigroup is represented as
\begin{align*}
	\left( e^{t \Delta} \varphi \right) \left( x \right) = \left( G_{t} \ast \varphi \right) \left( x \right) = \int_{\mathbb{R}^{n}} G_{t} \left( x-y \right) \varphi \left( y \right) dy
\end{align*}
for any $\left( t, x \right) \in \left( 0, + \infty \right) \times \mathbb{R}^{n}$.
We define the dilation $\delta_{t}$ which leaves $L^{1}$-norm invariant by
\begin{align*}
	\left( \delta_{t} \varphi \right) \left( x \right) =t^{- \frac{n}{2}} \varphi \left( t^{- \frac{1}{2}} x \right), \qquad \varphi \in L_{\mathrm{loc}}^{1} \left( \mathbb{R}^{n} \right), {\ } x \in \mathbb{R}^{n}
\end{align*}
for each $t>0$.
Then, the family of the dilations $\left( \delta_{t}; t>0 \right)$ has the following properties:
\begin{itemize}
	\item[(1)]
		$\delta_{t} \delta_{s} = \delta_{ts}$ for any $t, s>0$.
	\item[(2)]
		$\left\lVert \delta_{t} \varphi \right\rVert_{q} =t^{- \frac{n}{2} \left( 1- \frac{1}{q} \right)} \left\lVert \varphi \right\rVert_{q}$ for any $t>0$, $q \in \left[ 1, + \infty \right]$ and $\varphi \in L^{q} \left( \mathbb{R}^{n} \right)$.
\end{itemize}
Moreover, by using the dilation $\delta_{t}$, we can rewrite $G_{t}$ and the derivatives of $e^{t \Delta} \varphi$ as
\begin{align*}
	G_{t} &= \delta_{t} G_{1}, \\
	\partial^{\alpha} e^{t \Delta} \varphi &= \left( \partial^{\alpha} G_{t} \right) \ast \varphi = \left( \partial^{\alpha} \left( \delta_{t} G_{1} \right) \right) \ast \varphi =t^{- \frac{\left\lvert \alpha \right\rvert}{2}} \left( \delta_{t} \left( \partial^{\alpha} G_{1} \right) \right) \ast \varphi
\end{align*}
for any $t>0$ and $\alpha \in \mathbb{Z}_{\geq 0}^{n}$.
Then, we have the following well-known estimates.

\begin{lem} \label{lem:Lp-Lq}
	Let $1 \leq q \leq p \leq + \infty$ and let $\alpha \in \mathbb{Z}_{\geq 0}^{n}$.
	Then, the estimate
	\begin{align*}
		\left\lVert \partial^{\alpha} e^{t \Delta} \varphi \right\rVert_{p} \leq t^{- \frac{n}{2} \left( \frac{1}{q} - \frac{1}{p} \right) - \frac{\left\lvert \alpha \right\rvert}{2}} \left\lVert \partial^{\alpha} G_{1} \right\rVert_{r} \left\lVert \varphi \right\rVert_{q}
	\end{align*}
	holds for any $t>0$ and $\varphi \in L^{q} \left( \mathbb{R}^{n} \right)$, where $r \in \left[ 1, + \infty \right]$ with $1/p+1=1/r+1/q$.
\end{lem}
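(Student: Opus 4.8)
The plan is to reduce everything to Young's convolution inequality applied to the scaled kernel identity already recorded just above the statement, namely
\[
	\partial^{\alpha} e^{t \Delta} \varphi = t^{- \frac{\left\lvert \alpha \right\rvert}{2}} \left( \delta_{t} \left( \partial^{\alpha} G_{1} \right) \right) \ast \varphi, \qquad t>0.
\]
First I would justify this identity for $\varphi \in L^{q} \left( \mathbb{R}^{n} \right)$: since $G_{t}$ and all its derivatives are Schwartz functions, differentiation under the integral sign is legitimate, so $\partial^{\alpha} \left( G_{t} \ast \varphi \right) = \left( \partial^{\alpha} G_{t} \right) \ast \varphi$, while the scaling relation $G_{t} = \delta_{t} G_{1}$ together with the chain rule gives $\partial^{\alpha} G_{t} = t^{- \left\lvert \alpha \right\rvert /2} \delta_{t} \left( \partial^{\alpha} G_{1} \right)$.

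Next I would apply Young's inequality $\left\lVert f \ast g \right\rVert_{p} \leq \left\lVert f \right\rVert_{r} \left\lVert g \right\rVert_{q}$, which holds precisely when $1/p+1 = 1/r+1/q$ with $1 \leq q \leq p \leq + \infty$ (this includes the endpoint configurations $q=p$, whence $r=1$, and $p = + \infty$), taking $f = \delta_{t} \left( \partial^{\alpha} G_{1} \right)$ and $g = \varphi$. This is legitimate because $\partial^{\alpha} G_{1}$, being a polynomial times a Gaussian, belongs to $L^{r} \left( \mathbb{R}^{n} \right)$ for every $r \in \left[ 1, + \infty \right]$, hence so does each of its dilates. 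This yields
\[
	\left\lVert \partial^{\alpha} e^{t \Delta} \varphi \right\rVert_{p} \leq t^{- \frac{\left\lvert \alpha \right\rvert}{2}} \left\lVert \delta_{t} \left( \partial^{\alpha} G_{1} \right) \right\rVert_{r} \left\lVert \varphi \right\rVert_{q}.
\]

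Finally I would invoke property (2) of the dilation family, $\left\lVert \delta_{t} \psi \right\rVert_{r} = t^{- \frac{n}{2} \left( 1- \frac{1}{r} \right)} \left\lVert \psi \right\rVert_{r}$, and simplify the exponent using $1/r = 1+1/p-1/q$, which gives $1-1/r = 1/q-1/p$. Substituting, $\left\lVert \delta_{t} \left( \partial^{\alpha} G_{1} \right) \right\rVert_{r} = t^{- \frac{n}{2} \left( \frac{1}{q} - \frac{1}{p} \right)} \left\lVert \partial^{\alpha} G_{1} \right\rVert_{r}$, and combining with the previous display produces exactly the claimed bound with exponent $- \frac{n}{2} \left( \frac{1}{q} - \frac{1}{p} \right) - \frac{\left\lvert \alpha \right\rvert}{2}$.

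I do not anticipate a genuine obstacle: the proof is essentially bookkeeping. The only points requiring a line of care are the interchange of $\partial^{\alpha}$ with the convolution (handled by the Schwartz decay of the Gauss kernel together with dominated convergence), the verification that Young's inequality covers the endpoint indices $q=p$ and $p=+\infty$, and the exponent arithmetic linking $r$ to $p$ and $q$.
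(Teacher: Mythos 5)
Your proof is correct and follows exactly the route the paper intends: the paper omits the argument, remarking only that the lemma follows by simple calculations with Young's inequality, and your combination of the scaling identity $\partial^{\alpha} e^{t \Delta} \varphi = t^{- \left\lvert \alpha \right\rvert /2} \left( \delta_{t} \left( \partial^{\alpha} G_{1} \right) \right) \ast \varphi$, Young's inequality with $1/p+1=1/r+1/q$, and the dilation identity $\left\lVert \delta_{t} \psi \right\rVert_{r} = t^{- \frac{n}{2} \left( 1- \frac{1}{r} \right)} \left\lVert \psi \right\rVert_{r}$ is precisely that calculation. The exponent bookkeeping and the endpoint checks are all in order.
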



\begin{lem} \label{lem:Lp-Lq_2}
	Let $1 \leq q \leq p \leq + \infty$.
	Then, there exists $C_{p, q} >0$ such that the estimate
	\begin{align*}
		\left\lVert e^{t \Delta} \varphi \right\rVert_{p} \leq C_{p, q} \left( 1+t \right)^{- \frac{n}{2} \left( \frac{1}{q} - \frac{1}{p} \right)} \left( \left\lVert \varphi \right\rVert_{q} + \left\lVert \varphi \right\rVert_{p} \right)
	\end{align*}
	holds for any $t \geq 0$ and $\varphi \in \left( L^{q} \cap L^{p} \right) \left( \mathbb{R}^{n} \right)$.
	In particular, if $\varphi \in \left( L^{1} \cap L^{\infty} \right) \left( \mathbb{R}^{n} \right)$, then
	\begin{align*}
		\left\lVert e^{t \Delta} \varphi \right\rVert_{p} \leq C_{p, 1} \left( 1+t \right)^{- \frac{n}{2} \left( 1- \frac{1}{p} \right)} \left( \left\lVert \varphi \right\rVert_{1} + \left\lVert \varphi \right\rVert_{\infty} \right)
	\end{align*}
	for any $t \geq 0$.
\end{lem}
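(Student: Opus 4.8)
The plan is to split the time variable into the two regimes $0\le t\le 1$ and $t\ge 1$, bounding $\|e^{t\Delta}\varphi\|_p$ by a different term of $\|\varphi\|_q+\|\varphi\|_p$ in each.

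For $0\le t\le 1$ I would use only that the heat semigroup is an $L^p$-contraction: since $\|G_t\|_1=1$, Young's convolution inequality gives $\|e^{t\Delta}\varphi\|_p\le\|\varphi\|_p$ (with equality at $t=0$). On this interval the factor $(1+t)^{-\frac{n}{2}(\frac1q-\frac1p)}$ is bounded below by $2^{-\frac{n}{2}(\frac1q-\frac1p)}$, so $\|e^{t\Delta}\varphi\|_p\le 2^{\frac{n}{2}(\frac1q-\frac1p)}(1+t)^{-\frac{n}{2}(\frac1q-\frac1p)}\|\varphi\|_p$, which is the desired bound on $[0,1]$. For $t\ge 1$ I would apply Lemma~\ref{lem:Lp-Lq} with $\alpha=0$, giving $\|e^{t\Delta}\varphi\|_p\le t^{-\frac{n}{2}(\frac1q-\frac1p)}\|G_1\|_r\|\varphi\|_q$, where $\tfrac1r=1+\tfrac1p-\tfrac1q\in[\tfrac1p,1]$, so that $r\in[1,+\infty]$ and $\|G_1\|_r<+\infty$ because $G_1$ is a Schwartz function. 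Since $1+t\le 2t$ for $t\ge 1$, we have $t^{-\frac{n}{2}(\frac1q-\frac1p)}\le 2^{\frac{n}{2}(\frac1q-\frac1p)}(1+t)^{-\frac{n}{2}(\frac1q-\frac1p)}$, which yields the desired bound on $[1,+\infty)$. Combining the two regimes and setting $C_{p,q}:=2^{\frac{n}{2}(\frac1q-\frac1p)}\bigl(1+\|G_1\|_r\bigr)$ proves the first assertion.

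For the ``in particular'' statement I would apply the first part with $q=1$ (so that $r=p$) and then control the extra norm by interpolation: for $\varphi\in(L^1\cap L^\infty)(\mathbb{R}^n)$ one has $\|\varphi\|_p\le\|\varphi\|_1^{1/p}\|\varphi\|_\infty^{1-1/p}\le\|\varphi\|_1+\|\varphi\|_\infty$, hence $\|\varphi\|_1+\|\varphi\|_p\le 2\bigl(\|\varphi\|_1+\|\varphi\|_\infty\bigr)$, and absorbing the factor $2$ into the constant finishes the proof. I do not expect any genuine obstacle here: the argument is entirely elementary, and the only points needing a little care are the bookkeeping of the conjugate exponent $r$ in Lemma~\ref{lem:Lp-Lq} and checking that the time weight $(1+t)^{-\frac{n}{2}(\frac1q-\frac1p)}$ is comparable, on each of the two subintervals separately, to a constant multiple of $1$ and of $t^{-\frac{n}{2}(\frac1q-\frac1p)}$ respectively.
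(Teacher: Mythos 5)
Your proof is correct and is essentially the argument the paper has in mind: the authors omit the proof, remarking only that it follows by simple calculations with Young's inequality, and your splitting into $0\le t\le 1$ (contraction via $\left\lVert G_{t} \right\rVert_{1}=1$) and $t\ge 1$ (Lemma \ref{lem:Lp-Lq} with $\alpha =0$, plus $1+t\le 2t$), together with the interpolation $\left\lVert \varphi \right\rVert_{p} \leq \left\lVert \varphi \right\rVert_{1}^{1/p} \left\lVert \varphi \right\rVert_{\infty}^{1-1/p}$ for the second assertion, is exactly that standard computation. No gaps.
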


Since we can prove these lemmas by simple calculations with Young's inequality, we omit the proofs.

Next, we consider asymptotic expansions of the heat semigroup.
We define the translation $\tau_{h}$ by $h \in \mathbb{R}^{n}$ as
\begin{align*}
	\left( \tau_{h} \varphi \right) \left( x \right) = \varphi \left( x-h \right), \qquad \varphi \in L_{\mathrm{loc}}^{1} \left( \mathbb{R}^{n} \right), {\ } x \in \mathbb{R}^{n}.
\end{align*}

\begin{prop} \label{pro:heat_asymptotics_0}
	Let $\varphi \in L_{1}^{1} \left( \mathbb{R}^{n} \right)$ and let $q \in \left[ 1, + \infty \right]$.
	Then, the estimate
	\begin{align*}
		t^{\frac{n}{2} \left( 1- \frac{1}{q} \right)} \left\lVert e^{t \Delta} \varphi -c_{0} \delta_{t} G_{1} \right\rVert_{q} \leq \frac{1}{2} t^{- \frac{1}{2}} \sum_{j=1}^{n} \left\lVert x_{j} G_{1} \right\rVert_{q} \left\lVert x_{j} \varphi \right\rVert_{1}
	\end{align*}
	holds for any $t>0$, where
	\begin{align*}
		c_{0} \coloneqq \int_{\mathbb{R}^{n}} \varphi \left( y \right) dy.
	\end{align*}
\end{prop}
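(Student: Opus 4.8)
The plan is to prove the estimate by writing the difference $e^{t\Delta}\varphi - c_0 \delta_t G_1$ in integral form and applying a first-order Taylor expansion of the Gauss kernel inside the convolution. Since $c_0 \delta_t G_1 = c_0 G_t$ and $c_0 = \int_{\mathbb{R}^n}\varphi(y)\,dy$, we have
\[
	\left( e^{t\Delta}\varphi \right)(x) - c_0 G_t(x) = \int_{\mathbb{R}^n} \left( G_t(x-y) - G_t(x) \right) \varphi(y)\,dy .
\]
The key step is to express $G_t(x-y) - G_t(x)$ via the fundamental theorem of calculus along the segment joining $x$ to $x-y$, namely
\[
	G_t(x-y) - G_t(x) = -\sum_{j=1}^{n} y_j \int_0^1 \left( \partial_j G_t \right)(x - \theta y)\,d\theta ,
\]
so that after interchanging the order of integration (justified by Fubini, using $\varphi \in L_1^1$),
\[
	\left( e^{t\Delta}\varphi \right)(x) - c_0 G_t(x) = -\sum_{j=1}^{n} \int_0^1 \left( \left( \partial_j G_t \right) \ast \left( y_j \varphi \right)_\theta \right)(x)\,d\theta ,
\]
where $(y_j\varphi)_\theta$ denotes an appropriate rescaling/translation of $y_j\varphi$; more carefully, one writes the $\theta$-integrand as $\int_{\mathbb{R}^n} y_j (\partial_j G_t)(x-\theta y)\varphi(y)\,dy$ and estimates its $L^q_x$ norm directly.

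Next I would take the $L^q$ norm in $x$ and apply Young's convolution inequality (in the form $\|g\ast h\|_q \le \|g\|_q\|h\|_1$ with $1/q + 1 = 1/q + 1$). For fixed $j$ and $\theta$, the $L^q_x$-norm of $x \mapsto \int y_j (\partial_j G_t)(x-\theta y)\varphi(y)\,dy$ is bounded, via Minkowski's integral inequality, by $\int |y_j|\,|\varphi(y)|\,\|(\partial_j G_t)(\cdot - \theta y)\|_q\,dy = \|\partial_j G_t\|_q \|x_j\varphi\|_1$ (translation invariance of the $L^q$-norm kills the $\theta y$ shift). Then I use the scaling identity $\partial_j G_t = \partial_j(\delta_t G_1) = t^{-1/2}\delta_t(\partial_j G_1) = t^{-1/2}\delta_t(-\tfrac12 x_j G_1)$, since $\partial_j G_1(x) = -\tfrac12 x_j G_1(x)$; combined with property (2) of the dilations, $\|\partial_j G_t\|_q = t^{-1/2}\cdot\tfrac12\, t^{-\frac{n}{2}(1-1/q)}\|x_j G_1\|_q$. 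Summing over $j$ and integrating the harmless $\int_0^1 d\theta = 1$ yields
\[
	\left\lVert e^{t\Delta}\varphi - c_0\delta_t G_1 \right\rVert_q \le \frac{1}{2}\, t^{-\frac{1}{2}}\, t^{-\frac{n}{2}(1-\frac1q)} \sum_{j=1}^n \|x_j G_1\|_q \|x_j\varphi\|_1 ,
\]
which is exactly the claimed bound after multiplying through by $t^{\frac{n}{2}(1-1/q)}$.

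The main obstacle—really the only nontrivial point—is the justification of the manipulations: that the Taylor remainder representation is valid pointwise (clear, since $G_t$ is smooth), that Fubini/Minkowski applies so that one may pull the $L^q_x$-norm and the $dy$, $d\theta$ integrations past each other (this needs $\int_0^1\!\int_{\mathbb{R}^n} |y_j|\,|\varphi(y)|\,\|(\partial_j G_t)(\cdot-\theta y)\|_q\,dy\,d\theta < \infty$, which holds because $\varphi \in L_1^1$ and $\partial_j G_t \in L^q$ for every $t>0$), and that $c_0$ is finite (immediate from $\varphi \in L^1 \subset L_1^1$). For $q = \infty$ one should note $\partial_j G_t \in L^\infty$ so the same argument goes through verbatim. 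I would also remark that the constant is explicit and no smallness is needed—this is a purely linear statement. Everything else is the routine scaling bookkeeping sketched above, so I would not belabor it.
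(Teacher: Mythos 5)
Your proposal is correct and follows essentially the same route as the paper: the fundamental-theorem-of-calculus representation of $G_{t} \left( x-y \right) -G_{t} \left( x \right)$, Minkowski/translation invariance in $L^{q}$, and the scaling identity $\partial_{j} G_{t} =t^{- \frac{1}{2}} \delta_{t} \left( \partial_{j} G_{1} \right)$ with $\partial_{j} G_{1} =- \tfrac{1}{2} x_{j} G_{1}$. The paper merely keeps the integrand in the form $\tau_{\theta y} \left( \delta_{t} \left( x_{j} G_{1} \right) \right)$ before taking norms, which is the same bookkeeping you perform via $\left\lVert \partial_{j} G_{t} \right\rVert_{q}$.
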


\begin{proof}
	For any $t>0$, we have
	\begin{align*}
		\left( e^{t \Delta} \varphi -c_{0} \delta_{t} G_{1} \right) \left( x \right) &= \int_{\mathbb{R}^{n}} \left( G_{t} \left( x-y \right) -G_{t} \left( x \right) \right) \varphi \left( y \right) dy \\
		&= \int_{\mathbb{R}^{n}} \left( \int_{0}^{1} \frac{d}{d \theta} G_{t} \left( x- \theta y \right) d \theta \right) \varphi \left( y \right) dy \\
		&=- \sum_{j=1}^{n} \int_{\mathbb{R}^{n}} \int_{0}^{1} y_{j} \left( \partial_{j} G_{t} \right) \left( x- \theta y \right) \varphi \left( y \right) d \theta dy \\
		&= \frac{1}{2} t^{-1} \sum_{j=1}^{n} \int_{\mathbb{R}^{n}} \int_{0}^{1} \left( x_{j} - \theta y_{j} \right) G_{t} \left( x- \theta y \right) y_{j} \varphi \left( y \right) d \theta dy \\
		&= \frac{1}{2} t^{- \frac{n}{2} - \frac{1}{2}} \sum_{j=1}^{n} \int_{\mathbb{R}^{n}} \int_{0}^{1} \left( t^{- \frac{1}{2}} \left( x_{j} - \theta y_{j} \right) \right) G_{1} \left( t^{- \frac{1}{2}} \left( x- \theta y \right) \right) y_{j} \varphi \left( y \right) d \theta dy \\
		&= \frac{1}{2} t^{- \frac{1}{2}} \sum_{j=1}^{n} \int_{\mathbb{R}^{n}} \int_{0}^{1} \left( \tau_{\theta y} \left( \delta_{t} \left( x_{j} G_{1} \right) \right) \right) \left( x \right) y_{j} \varphi \left( y \right) d \theta dy.
	\end{align*}
	Therefore, we obtain
	\begin{align*}
		\left\lVert e^{t \Delta} \varphi -c_{0} \delta_{t} G_{1} \right\rVert_{q} &\leq \frac{1}{2} t^{- \frac{1}{2}} \sum_{j=1}^{n} \int_{\mathbb{R}^{n}} \int_{0}^{1} \left\lVert \tau_{\theta y} \left( \delta_{t} \left( x_{j} G_{1} \right) \right) \right\rVert_{q} \left\lvert y_{j} \varphi \left( y \right) \right\rvert d \theta dy \\
		&= \frac{1}{2} t^{- \frac{1}{2}} \sum_{j=1}^{n} \int_{\mathbb{R}^{n}} \int_{0}^{1} \left\lVert \delta_{t} \left( x_{j} G_{1} \right) \right\rVert_{q} \left\lvert y_{j} \varphi \left( y \right) \right\rvert d \theta dy \\
		&= \frac{1}{2} t^{- \frac{n}{2} \left( 1- \frac{1}{q} \right) - \frac{1}{2}} \sum_{j=1}^{n} \int_{\mathbb{R}^{n}} \left\lVert x_{j} G_{1} \right\rVert_{q} \left\lvert y_{j} \varphi \left( y \right) \right\rvert dy \\
		&= \frac{1}{2} t^{- \frac{n}{2} \left( 1- \frac{1}{q} \right) - \frac{1}{2}} \sum_{j=1}^{n} \left\lVert x_{j} G_{1} \right\rVert_{q} \left\lVert x_{j} \varphi \right\rVert_{1}.
	\end{align*}
\end{proof}

\begin{rem}
	For some readers, the above proof seems to be redundant.
	However, we write it to see that it is a special case of the proof of Proposition \ref{pro:heat_asymptotics_higher} below.
\end{rem}

We remark that the key point in the proof of Proposition \ref{pro:heat_asymptotics_0} is to calculate the difference $G_{t} \left( x-y \right) -G_{t} \left( x \right)$ explicitly.
Therefore, by applying Taylor's theorem, we obtain higher order asymptotic expansions of the heat semigroup.
Now, we define the Hermite polynomial of order $k$ by
\begin{align}
	\label{eq:2.1}
	H_{k} \left( x \right) = \left( -1 \right)^{k} e^{x^{2}} \left( \frac{d}{dx} \right)^{k} e^{-x^{2}}, \qquad x \in \mathbb{R}
\end{align}
for each $k \in \mathbb{Z}_{\geq 0}$.
The following representation of $H_{k}$ is well-known:
\begin{align}
	\label{eq:2.2}
	H_{k} \left( x \right) = \sum_{j=0}^{\left[ k/2 \right]} \frac{\left( -1 \right)^{j} k!}{j! \left( k-2j \right) !} \left( 2x \right)^{k-2j},
\end{align}
where $\left[ k/2 \right] \coloneqq \max \left\{ j \in \mathbb{Z}_{\geq 0}; {\,} j \leq k/2 \right\}$.
Moreover, we define the multi-variable Hermite polynomial of order $\alpha$ by $\bm{H}_{\alpha} =H_{\alpha_{1}} \otimes \cdots \otimes H_{\alpha_{n}}$, namely,
\begin{align}
	\label{eq:2.3}
	\bm{H}_{\alpha} \left( x \right) = \prod_{j=1}^{n} H_{\alpha_{j}} \left( x_{j} \right) = \left( -1 \right)^{\left\lvert \alpha \right\rvert} e^{\left\lvert x \right\rvert^{2}} \partial^{\alpha} e^{- \left\lvert x \right\rvert^{2}}, \qquad x= \left( x_{1}, \ldots, x_{n} \right) \in \mathbb{R}^{n}
\end{align}
for each $\alpha = \left( \alpha_{1}, \cdots, \alpha_{n} \right) \in \mathbb{Z}_{\geq 0}^{n}$ (see also \cite{Ozawa1990,Ozawa1991}).
Then, it follows from \eqref{eq:2.2} that
\begin{align}
	\label{eq:2.4}
	\bm{H}_{\alpha} \left( x \right) &= \prod_{j=1}^{n} \sum_{\beta_{j} =0}^{\left[ \alpha_{j} /2 \right]} \frac{\left( -1 \right)^{\beta_{j}} \alpha_{j} !}{\beta_{j} ! \left( \alpha_{j} -2 \beta_{j} \right) !} \left( 2x_{j} \right)^{\alpha_{j} -2 \beta_{j}} \nonumber \\
	&= \sum_{2 \beta \leq \alpha} \frac{\left( -1 \right)^{\left\lvert \beta \right\rvert} \alpha !}{\beta ! \left( \alpha -2 \beta \right) !} \left( 2x \right)^{\alpha -2 \beta}.
\end{align}
Using the multi-variable Hermite polynomial $\bm{H}_{\alpha}$, we can rewrite the derivatives of $G_{1}$ as
\begin{align}
	\label{eq:2.5}
	\partial^{\alpha} G_{1} \left( x \right) &= \left( 4 \pi \right)^{- \frac{n}{2}} \partial^{\alpha} \exp \left( - \left\lvert \frac{x}{2} \right\rvert^{2} \right) \nonumber \\
	&= \left( 4 \pi \right)^{- \frac{n}{2}} 2^{- \left\lvert \alpha \right\rvert} \left[ \partial_{y}^{\alpha} e^{- \left\lvert y \right\rvert^{2}} \right]_{y= \frac{x}{2}} \nonumber \\
	&= \left( 4 \pi \right)^{- \frac{n}{2}} 2^{- \left\lvert \alpha \right\rvert} \left( -1 \right)^{- \left\lvert \alpha \right\rvert} \exp \left( - \left\lvert \frac{x}{2} \right\rvert^{2} \right) \left[ \left( -1 \right)^{\left\lvert \alpha \right\rvert} e^{\left\lvert y \right\rvert^{2}} \partial_{y}^{\alpha} e^{- \left\lvert y \right\rvert^{2}} \right]_{y= \frac{x}{2}} \nonumber \\
	&= \left( -2 \right)^{- \left\lvert \alpha \right\rvert} G_{1} \left( x \right) \bm{H}_{\alpha} \left( \frac{x}{2} \right).
\end{align}
Therefore, by setting
\begin{align}
	\label{eq:2.6}
	\bm{h}_{\alpha} \left( x \right) \coloneqq \bm{H}_{\alpha} \left( \frac{x}{2} \right) = \sum_{2 \beta \leq \alpha} \frac{\left( -1 \right)^{\left\lvert \beta \right\rvert} \alpha !}{\beta ! \left( \alpha -2 \beta \right) !} x^{\alpha -2 \beta}, \qquad x \in \mathbb{R}^{n},
\end{align}
we have
\begin{align}
	\label{eq:2.7}
	\partial^{\alpha} G_{1} &= \left( -2 \right)^{- \left\lvert \alpha \right\rvert} \bm{h}_{\alpha} G_{1}, \\
	\label{eq:2.8}
	\partial^{\alpha} G_{t} &= \partial^{\alpha} \left( \delta_{t} G_{1} \right) =t^{- \frac{\left\lvert \alpha \right\rvert}{2}} \delta_{t} \left( \partial^{\alpha} G_{1} \right) = \left( -2 \right)^{- \left\lvert \alpha \right\rvert} t^{- \frac{\left\lvert \alpha \right\rvert}{2}} \delta_{t} \left( \bm{h}_{\alpha} G_{1} \right).
\end{align}
Under the above preparations, we state higher order asymptotic expansions of the heat semigroup.

\begin{prop} \label{pro:heat_asymptotics_higher}
	Let $m \in \mathbb{Z}_{\geq 0}$, $\varphi \in L_{m+1}^{1} \left( \mathbb{R}^{n} \right)$ and $q \in \left[ 1, + \infty \right]$.
	Then, the estimate
	\begin{align*}
		t^{\frac{n}{2} \left( 1- \frac{1}{q} \right)} {\,} \biggl\lVert e^{t \Delta} \varphi - \sum_{k=0}^{m} 2^{-k} t^{- \frac{k}{2}} \sum_{\left\lvert \alpha \right\rvert =k} c_{\alpha} \delta_{t} \left( \bm{h}_{\alpha} G_{1} \right) \biggr\rVert_{q} &\leq 2^{- \left( m+1 \right)} t^{- \frac{m+1}{2}} \sum_{\left\lvert \alpha \right\rvert =m+1} \frac{1}{\alpha !} \left\lVert \bm{h}_{\alpha} G_{1} \right\rVert_{q} \left\lVert x^{\alpha} \varphi \right\rVert_{1}
	\end{align*}
	holds for any $t>0$, where
	\begin{align*}
		c_{\alpha} \coloneqq \frac{1}{\alpha !} \int_{\mathbb{R}^{n}} y^{\alpha} \varphi \left( y \right) dy.
	\end{align*}
\end{prop}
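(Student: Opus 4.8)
The plan is to follow the proof of Proposition~\ref{pro:heat_asymptotics_0}, replacing the first-order fundamental theorem of calculus used there by the multivariate Taylor formula with integral remainder. Fix $t>0$. Starting from $\left(e^{t\Delta}\varphi\right)(x)=\int_{\mathbb{R}^{n}}G_{t}(x-y)\varphi(y)\,dy$ and applying Taylor's theorem to the smooth map $\theta\mapsto G_{t}(x-\theta y)$ on $[0,1]$, one obtains
\begin{align*}
	G_{t}(x-y)=\sum_{|\alpha|\leq m}\frac{(-1)^{|\alpha|}}{\alpha!}\,y^{\alpha}\left(\partial^{\alpha}G_{t}\right)(x)+(m+1)\sum_{|\alpha|=m+1}\frac{(-1)^{m+1}}{\alpha!}\,y^{\alpha}\int_{0}^{1}(1-\theta)^{m}\left(\partial^{\alpha}G_{t}\right)(x-\theta y)\,d\theta.
\end{align*}
Multiplying by $\varphi(y)$ and integrating over $\mathbb{R}^{n}$ (legitimate by the Gaussian decay of $\partial^{\alpha}G_{t}$ together with $\varphi\in L^{1}_{m+1}$) splits $e^{t\Delta}\varphi$ into a polynomial part and a remainder.

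For the polynomial part I would use the three identities $\int_{\mathbb{R}^{n}}y^{\alpha}\varphi(y)\,dy=\alpha!\,c_{\alpha}$ (finite since $\varphi\in L^{1}_{m+1}\subset L^{1}_{m}$), the formula $\partial^{\alpha}G_{t}=(-2)^{-|\alpha|}t^{-|\alpha|/2}\delta_{t}(\bm{h}_{\alpha}G_{1})$ from \eqref{eq:2.8}, and the sign cancellation $(-1)^{|\alpha|}(-2)^{-|\alpha|}=2^{-|\alpha|}$. These combine to show that the $|\alpha|\leq m$ part of the expansion equals precisely $\sum_{k=0}^{m}2^{-k}t^{-k/2}\sum_{|\alpha|=k}c_{\alpha}\delta_{t}(\bm{h}_{\alpha}G_{1})$, so that the quantity inside the norm in the statement is exactly the remainder integral
\begin{align*}
	(m+1)\sum_{|\alpha|=m+1}\frac{(-1)^{m+1}}{\alpha!}\int_{\mathbb{R}^{n}}\int_{0}^{1}(1-\theta)^{m}\,y^{\alpha}\left(\partial^{\alpha}G_{t}\right)(x-\theta y)\,\varphi(y)\,d\theta\,dy,
\end{align*}
which is the exact higher-order analogue of the last displayed identity in the proof of Proposition~\ref{pro:heat_asymptotics_0}.

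It remains to estimate this remainder in $L^{q}_{x}$. For $|\alpha|=m+1$, \eqref{eq:2.8} and the definition of $\tau_{\theta y}$ give $\left(\partial^{\alpha}G_{t}\right)(x-\theta y)=(-2)^{-(m+1)}t^{-(m+1)/2}\left(\tau_{\theta y}\left(\delta_{t}(\bm{h}_{\alpha}G_{1})\right)\right)(x)$. Then Minkowski's integral inequality, the translation invariance $\left\lVert\tau_{\theta y}(\delta_{t}(\bm{h}_{\alpha}G_{1}))\right\rVert_{q}=\left\lVert\delta_{t}(\bm{h}_{\alpha}G_{1})\right\rVert_{q}=t^{-\frac{n}{2}(1-\frac{1}{q})}\left\lVert\bm{h}_{\alpha}G_{1}\right\rVert_{q}$ (finite since $\bm{h}_{\alpha}$ is a polynomial and $G_{1}$ a Gaussian), the elementary identity $\int_{0}^{1}(1-\theta)^{m}\,d\theta=1/(m+1)$ which absorbs the factor $m+1$, and $\int_{\mathbb{R}^{n}}|y^{\alpha}\varphi(y)|\,dy=\left\lVert x^{\alpha}\varphi\right\rVert_{1}$ together yield exactly the claimed bound after multiplying through by $t^{\frac{n}{2}(1-\frac{1}{q})}$.

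I do not expect any genuinely hard step. The points deserving a word of care are all routine: the applicability of Taylor's formula with integral remainder and of Fubini/Minkowski, which follow at once from the Schwartz decay of $\partial^{\alpha}G_{t}$ and from $\varphi\in L^{1}_{m+1}$; and the endpoint $q=\infty$, for which Minkowski's integral inequality remains valid (or one simply pulls the essential supremum in $x$ inside the $\theta$- and $y$-integrals). The actual work is purely the multi-index bookkeeping required to match the constants, and Proposition~\ref{pro:heat_asymptotics_0} is precisely the $m=0$ template for it.
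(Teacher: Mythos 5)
Your proposal is correct and is essentially the paper's own proof: the same Taylor expansion of $G_{t}\left( x-y \right)$ with integral remainder, the same use of \eqref{eq:2.8} to convert $\partial^{\alpha} G_{t}$ into $2^{- \left\lvert \alpha \right\rvert} t^{- \left\lvert \alpha \right\rvert /2} \delta_{t} \left( \bm{h}_{\alpha} G_{1} \right)$ after the sign cancellation, and the same remainder estimate via translation invariance of the $L^{q}$-norm, $\left\lVert \delta_{t} \left( \bm{h}_{\alpha} G_{1} \right) \right\rVert_{q} =t^{- \frac{n}{2} \left( 1- \frac{1}{q} \right)} \left\lVert \bm{h}_{\alpha} G_{1} \right\rVert_{q}$, and $\int_{0}^{1} \left( 1- \theta \right)^{m} d \theta = \left( m+1 \right)^{-1}$. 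No gaps; the bookkeeping you describe is exactly what the paper carries out.
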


\begin{rem}
	When $m=1$, the asymptotic profile of the heat semigroup is represented as
	\begin{align*}
		\sum_{k=0}^{1} 2^{-k} t^{- \frac{k}{2}} \sum_{\left\lvert \alpha \right\rvert =k} c_{\alpha} \delta_{t} \left( \bm{h}_{\alpha} G_{1} \right) =c_{0} \delta_{t} G_{1} + \frac{1}{2} t^{- \frac{1}{2}} \sum_{j=1}^{n} c_{e_{j}} \delta_{t} \left( x_{j} G_{1} \right),
	\end{align*}
	where $\left( e_{j}; j=1, \ldots, n \right)$ is the standard basis of $\mathbb{R}^{n}$.
\end{rem}

\begin{proof}[Proof of Proposition \ref{pro:heat_asymptotics_higher}]
	By Taylor's theorem, we obtain
	\begin{align*}
		G_{t} \left( x-y \right) &= \sum_{\left\lvert \alpha \right\rvert \leq m} \frac{1}{\alpha !} \left( -y \right)^{\alpha} \left( \partial^{\alpha} G_{t} \right) \left( x \right) + \sum_{\left\lvert \alpha \right\rvert =m+1} \frac{m+1}{\alpha !} \int_{0}^{1} \left( 1- \theta \right)^{m} \left( -y \right)^{\alpha} \left( \partial^{\alpha} G_{t} \right) \left( x- \theta y \right) d \theta \\
		&= \sum_{k=0}^{m} 2^{-k} t^{- \frac{k}{2}} \sum_{\left\lvert \alpha \right\rvert =k} \frac{1}{\alpha !} y^{\alpha} \left( \delta_{t} \left( \bm{h}_{\alpha} G_{1} \right) \right) \left( x \right) \\
		&\hspace{2cm} +2^{- \left( m+1 \right)} t^{- \frac{m+1}{2}} \sum_{\left\lvert \alpha \right\rvert =m+1} \frac{m+1}{\alpha !} \int_{0}^{1} \left( 1- \theta \right)^{m} y^{\alpha} \left( \tau_{\theta y} \left( \delta_{t} \left( \bm{h}_{\alpha} G_{1} \right) \right) \right) \left( x \right) d \theta,
	\end{align*}
	whence follows
	\begin{align*}
		&\biggl( e^{t \Delta} \varphi - \sum_{k=0}^{m} 2^{-k} t^{- \frac{k}{2}} \sum_{\left\lvert \alpha \right\rvert =k} c_{\alpha} \delta_{t} \left( \bm{h}_{\alpha} G_{1} \right) \biggr) \left( x \right) \\
		&\hspace{1cm} = \int_{\mathbb{R}^{n}} \biggl( G_{t} \left( x-y \right) - \sum_{k=0}^{m} 2^{-k} t^{- \frac{k}{2}} \sum_{\left\lvert \alpha \right\rvert =k} \frac{1}{\alpha !} y^{\alpha} \left( \delta_{t} \left( \bm{h}_{\alpha} G_{1} \right) \right) \left( x \right) \biggr) \varphi \left( y \right) dy \\
		&\hspace{1cm} =2^{- \left( m+1 \right)} t^{- \frac{m+1}{2}} \sum_{\left\lvert \alpha \right\rvert =m+1} \frac{m+1}{\alpha !} \int_{\mathbb{R}^{n}} \int_{0}^{1} \left( 1- \theta \right)^{m} \left( \tau_{\theta y} \left( \delta_{t} \left( \bm{h}_{\alpha} G_{1} \right) \right) \right) \left( x \right) y^{\alpha} \varphi \left( y \right) d \theta dy.
	\end{align*}
	Therefore, for any $t>0$, we have
	\begin{align*}
		&\biggl\lVert e^{t \Delta} \varphi - \sum_{k=0}^{m} 2^{-k} t^{- \frac{k}{2}} \sum_{\left\lvert \alpha \right\rvert =k} c_{\alpha} \delta_{t} \left( \bm{h}_{\alpha} G_{1} \right) \biggr\rVert_{q} \\
		&\hspace{1cm} \leq 2^{- \left( m+1 \right)} t^{- \frac{m+1}{2}} \sum_{\left\lvert \alpha \right\rvert =m+1} \frac{m+1}{\alpha !} \int_{\mathbb{R}^{n}} \int_{0}^{1} \left( 1- \theta \right)^{m} \left\lVert \tau_{\theta y} \left( \delta_{t} \left( \bm{h}_{\alpha} G_{1} \right) \right) \right\rVert_{q} \left\lvert y^{\alpha} \varphi \left( y \right) \right\rvert d \theta dy \\
		&\hspace{1cm} =2^{- \left( m+1 \right)} t^{- \frac{m+1}{2}} \sum_{\left\lvert \alpha \right\rvert =m+1} \frac{m+1}{\alpha !} \int_{\mathbb{R}^{n}} \int_{0}^{1} \left( 1- \theta \right)^{m} \left\lVert \delta_{t} \left( \bm{h}_{\alpha} G_{1} \right) \right\rVert_{q} \left\lvert y^{\alpha} \varphi \left( y \right) \right\rvert d \theta dy \\
		&\hspace{1cm} =2^{- \left( m+1 \right)} t^{- \frac{n}{2} \left( 1- \frac{1}{q} \right) - \frac{m+1}{2}} \sum_{\left\lvert \alpha \right\rvert =m+1} \frac{1}{\alpha !} \int_{\mathbb{R}^{n}} \left\lVert \bm{h}_{\alpha} G_{1} \right\rVert_{q} \left\lvert y^{\alpha} \varphi \left( y \right) \right\rvert dy \\
		&\hspace{1cm} =2^{- \left( m+1 \right)} t^{- \frac{n}{2} \left( 1- \frac{1}{q} \right) - \frac{m+1}{2}} \sum_{\left\lvert \alpha \right\rvert =m+1} \frac{1}{\alpha !} \left\lVert \bm{h}_{\alpha} G_{1} \right\rVert_{q} \left\lVert x^{\alpha} \varphi \right\rVert_{1}.
	\end{align*}
\end{proof}

\begin{rem}
	There are many methods to obtain the asymptotic expansions of the heat semigroup \cite{Duoandikoetxea-Zuazua, Fujigaki-Miyakawa, Dolbeault-Karch, Giga-Giga-Saal, Vazquez}.
	On the $m$-th order asymptotic expansion of the heat semigroup with $\varphi \in L_{m}^{1} \left( \mathbb{R}^{n} \right)$, see Appendix \ref{app:B}.
\end{rem}

As we state in the introduction, we need the following commutation relations between the heat semigroup and monomial weights in $\mathbb{R}^{n}$ to show Theorem \ref{th:P_weight}.

\begin{thm} \label{th:heat_commutator}
	Let $m \in \mathbb{Z}_{>0}$, $\varphi \in L_{m}^{1} \left( \mathbb{R}^{n} \right)$ and $\alpha \in \mathbb{Z}_{\geq 0}^{n}$ with $\left\lvert \alpha \right\rvert =m$.
	Then, $x^{\alpha} e^{t \Delta} \varphi \in L^{1} \left( \mathbb{R}^{n} \right)$ and the identity
	\begin{align}
		\label{eq:2.9}
		x^{\alpha} e^{t \Delta} \varphi -e^{t \Delta} x^{\alpha} \varphi =R_{\alpha} \left( t \right) \varphi
	\end{align}
	holds in $L^{1} \left( \mathbb{R}^{n} \right)$ for any $t>0$, where
	\begin{align}
		R_{\alpha} \left( t \right) \varphi \coloneqq \sum_{\substack{\beta + \gamma = \alpha \\ \beta \neq 0}} \frac{\alpha !}{\beta ! \gamma !} \left( -2t \partial \right)^{\beta} e^{t \Delta} x^{\gamma} \varphi + \sum_{\substack{\beta + \gamma \leq \alpha, {\ } \left\lvert \beta + \gamma \right\rvert \leq \left\lvert \alpha \right\rvert -2 \\ \left\lvert \beta \right\rvert +1 \leq \ell \leq \frac{\left\lvert \alpha \right\rvert + \left\lvert \beta \right\rvert - \left\lvert \gamma \right\rvert}{2}}} C_{\ell \beta \gamma}^{\alpha} t^{\ell} \partial^{\beta} e^{t \Delta} x^{\gamma} \varphi
	\end{align}
	for some $C_{\ell \beta \gamma}^{\alpha} \in \mathbb{R}$ independent of $t$, $x$ and $\varphi$.
\end{thm}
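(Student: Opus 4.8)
The plan is to reduce \eqref{eq:2.9} to a pointwise identity for the Gauss kernel through the binomial expansion $x^{\alpha} = \left( \left( x-y \right) + y \right)^{\alpha}$, and then to trade each monomial weight $z^{\beta} G_{t}$ for derivatives of $G_{t}$ by inverting the Hermite-type relations \eqref{eq:2.6}--\eqref{eq:2.8}. First I would write, for $t>0$ and $x \in \mathbb{R}^{n}$,
\begin{align*}
	x^{\alpha} \left( e^{t \Delta} \varphi \right) \left( x \right) &= \int_{\mathbb{R}^{n}} x^{\alpha} G_{t} \left( x-y \right) \varphi \left( y \right) dy \\
	&= \sum_{\beta + \gamma = \alpha} \frac{\alpha !}{\beta ! \gamma !} \int_{\mathbb{R}^{n}} \left( x-y \right)^{\beta} y^{\gamma} G_{t} \left( x-y \right) \varphi \left( y \right) dy,
\end{align*}
so that $x^{\alpha} e^{t \Delta} \varphi = \sum_{\beta + \gamma = \alpha} \binom{\alpha}{\beta} \left( z^{\beta} G_{t} \right) \ast \left( x^{\gamma} \varphi \right)$; the interchange of the finite sum with the integral is trivial once each integrand is integrable in $y$, which holds because $z^{\beta} G_{t}$ is bounded and $x^{\gamma} \varphi \in L^{1} \left( \mathbb{R}^{n} \right)$ for $\left\lvert \gamma \right\rvert \leq m$ by $\varphi \in L_{m}^{1} \left( \mathbb{R}^{n} \right)$. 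Since $z^{\beta} G_{t} \in L^{1} \left( \mathbb{R}^{n} \right)$ by the Gaussian decay, Young's inequality gives $\left( z^{\beta} G_{t} \right) \ast \left( x^{\gamma} \varphi \right) \in L^{1} \left( \mathbb{R}^{n} \right)$ and hence $x^{\alpha} e^{t \Delta} \varphi \in L^{1} \left( \mathbb{R}^{n} \right)$; moreover the $\beta = 0$ term of the sum is exactly $e^{t \Delta} x^{\alpha} \varphi$, so it remains only to identify $\sum_{\beta + \gamma = \alpha, {\,} \beta \neq 0} \binom{\alpha}{\beta} \left( z^{\beta} G_{t} \right) \ast \left( x^{\gamma} \varphi \right)$ with $R_{\alpha} \left( t \right) \varphi$.

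Next I would prove the kernel identity: for every $\beta \in \mathbb{Z}_{\geq 0}^{n}$ and $t>0$,
\begin{align*}
	z^{\beta} G_{t} &= \left( -2t \partial \right)^{\beta} G_{t} + \beta ! \sum_{\substack{2 \mu \leq \beta \\ \mu \neq 0}} \frac{\left( -2 \right)^{\left\lvert \beta \right\rvert - 2 \left\lvert \mu \right\rvert}}{\mu ! \left( \beta - 2 \mu \right) !} {\,} t^{\left\lvert \beta \right\rvert - \left\lvert \mu \right\rvert} \partial^{\beta - 2 \mu} G_{t}.
\end{align*}
This is obtained by inverting \eqref{eq:2.6}: the (coordinatewise) Hermite inversion formula reads $x^{\beta} = \beta ! \sum_{2 \mu \leq \beta} \frac{1}{\mu ! \left( \beta - 2 \mu \right) !} \bm{h}_{\beta - 2 \mu} \left( x \right)$, so multiplying by $G_{1}$ and using \eqref{eq:2.7} to replace $\bm{h}_{\nu} G_{1}$ by $\left( -2 \right)^{\left\lvert \nu \right\rvert} \partial^{\nu} G_{1}$ settles the case $t=1$, and applying $\delta_{t}$ together with \eqref{eq:2.8} gives the general case. (Equivalently, the identity follows by induction on $\left\lvert \beta \right\rvert$ from $z_{j} G_{t} = -2t \partial_{j} G_{t}$ and $z_{j} \partial^{\nu} G_{t} = -2t \partial^{\nu + e_{j}} G_{t} - \nu_{j} \partial^{\nu - e_{j}} G_{t}$, which keeps the argument entirely within elementary calculus.)

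Finally I would substitute the kernel identity into each term $\binom{\alpha}{\beta} \left( z^{\beta} G_{t} \right) \ast \left( x^{\gamma} \varphi \right)$ with $\beta + \gamma = \alpha$, $\beta \neq 0$, using $\binom{\alpha}{\beta} = \alpha ! / \left( \beta ! \gamma ! \right)$ and $\left( \partial^{\nu} G_{t} \right) \ast \left( x^{\gamma} \varphi \right) = \partial^{\nu} e^{t \Delta} \left( x^{\gamma} \varphi \right)$. The leading terms $\left( -2t \partial \right)^{\beta} G_{t}$ sum to $\sum_{\beta + \gamma = \alpha, {\,} \beta \neq 0} \frac{\alpha !}{\beta ! \gamma !} \left( -2t \partial \right)^{\beta} e^{t \Delta} x^{\gamma} \varphi$, i.e. the first sum in $R_{\alpha} \left( t \right) \varphi$. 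For the remaining terms I would reindex by the derivative order $\nu = \beta - 2 \mu$ while keeping $\gamma$: then $\nu + \gamma \leq \alpha$ and $\left\lvert \nu + \gamma \right\rvert = \left\lvert \alpha \right\rvert - 2 \left\lvert \mu \right\rvert \leq \left\lvert \alpha \right\rvert - 2$, the multi-index $\mu = \left( \alpha - \nu - \gamma \right) /2$ is uniquely determined, and the accompanying power of $t$ is $\ell = \left\lvert \beta \right\rvert - \left\lvert \mu \right\rvert = \left( \left\lvert \alpha \right\rvert + \left\lvert \nu \right\rvert - \left\lvert \gamma \right\rvert \right) /2$, which obeys $\left\lvert \nu \right\rvert + 1 \leq \ell$ because $\mu \neq 0$. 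Relabelling $\nu$ as $\beta$, this is precisely the index set in the second sum of $R_{\alpha} \left( t \right) \varphi$, with $C_{\ell \beta \gamma}^{\alpha} = \frac{\alpha !}{\gamma !} \cdot \frac{\left( -2 \right)^{\left\lvert \beta \right\rvert}}{\left( \left( \alpha - \beta - \gamma \right) /2 \right) ! {\,} \beta !}$ (and $C_{\ell \beta \gamma}^{\alpha} = 0$ whenever $\left( \alpha - \beta - \gamma \right) /2 \notin \mathbb{Z}_{\geq 0}^{n}$), establishing \eqref{eq:2.9}. I expect the only genuine work to be this bookkeeping — confirming that the powers of $t$ and the derivative and weight orders all land inside the stated ranges after reindexing and that no term is omitted — since the binomial expansion, the convolution estimates, and the Hermite inversion formula (classical, provable by induction on $\left\lvert \beta \right\rvert$) are routine.
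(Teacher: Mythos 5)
Your proposal is correct, and it proves the theorem by a genuinely different route from the paper. You expand the weight directly, $x^{\alpha} = \left( \left( x-y \right) +y \right)^{\alpha}$, so that $x^{\alpha} e^{t \Delta} \varphi = \sum_{\beta + \gamma = \alpha} \binom{\alpha}{\beta} \left( z^{\beta} G_{t} \right) \ast \left( x^{\gamma} \varphi \right)$, and then convert each kernel $z^{\beta} G_{t}$ into derivatives of $G_{t}$ via the classical Hermite inversion formula combined with the paper's identities \eqref{eq:2.6}--\eqref{eq:2.8}; your bookkeeping of the resulting indices is accurate (in particular $\ell = \left\lvert \beta - 2 \mu \right\rvert + \left\lvert \mu \right\rvert$ equals exactly the upper endpoint $\left( \left\lvert \alpha \right\rvert + \left\lvert \beta' \right\rvert - \left\lvert \gamma' \right\rvert \right) /2$ and is at least $\left\lvert \beta' \right\rvert +1$ because $\mu \neq 0$), and the $L^{1}$ statements follow from Young's inequality as you say. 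The paper instead argues by induction on $\left\lvert \alpha \right\rvert$, starting from $x_{j} e^{t \Delta} \varphi - e^{t \Delta} x_{j} \varphi = -2t \partial_{j} e^{t \Delta} \varphi$ and showing step by step that $x_{j} R_{\alpha} \left( t \right) \varphi$ reorganizes into part of $R_{\alpha + e_{j}} \left( t \right) \varphi$. Your approach is shorter and more explicit: it produces closed-form coefficients $C_{\ell \beta \gamma}^{\alpha}$ and shows that only the top power of $t$ actually occurs, whereas the inductive proof only asserts existence of real coefficients within a range of powers. What the paper's induction buys is Remark \ref{rem:heat_remainder_esti}: the fact that $x_{j} R_{\alpha} \left( t \right) \varphi$ is itself of the form of (a part of) $R_{\alpha + e_{j}} \left( t \right) \varphi$ is read off from the inductive step and is then used in the proof of Theorem \ref{th:P_weight}; with your direct proof that statement would need a short separate verification (which your explicit formula for $C_{\ell \beta \gamma}^{\alpha}$, together with the same commutation identities $z_{j} \partial^{\nu} G_{t} = -2t \partial^{\nu + e_{j}} G_{t} - \nu_{j} \partial^{\nu - e_{j}} G_{t}$, readily supplies), but for the theorem as stated your argument is complete.
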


Theorem \ref{th:heat_commutator} yields the commutator estimates of the heat semigroup and the monomial weights.

\begin{thm} \label{th:heat_commutator_esti}
	Let $m \in \mathbb{Z}_{>0}$.
	Then, there exists $C_{m} >0$ such that the estimate
	\begin{align}
		\label{eq:2.11}
		\sum_{\left\lvert \alpha \right\rvert =m} \left\lVert x^{\alpha} e^{t \Delta} \varphi -e^{t \Delta} x^{\alpha} \varphi \right\rVert_{1} \leq C_{m} \left\{ t^{\frac{1}{2}} \bigl\lVert \left\lvert x \right\rvert^{m-1} \varphi \bigr\rVert_{1} + \left( t^{\frac{1}{2}} +t^{\frac{m}{2}} \right) \left\lVert \varphi \right\rVert_{1} \right\}
	\end{align}
	holds for any $\varphi \in L_{m}^{1} \left( \mathbb{R}^{n} \right)$ and $t>0$.
\end{thm}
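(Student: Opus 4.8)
The plan is to start from the explicit identity \eqref{eq:2.9} in Theorem \ref{th:heat_commutator}, which writes the commutator $x^{\alpha} e^{t\Delta}\varphi - e^{t\Delta} x^{\alpha}\varphi$ as $R_{\alpha}(t)\varphi$, a finite sum of two types of terms: first, the terms $\frac{\alpha!}{\beta!\gamma!}(-2t\partial)^{\beta} e^{t\Delta} x^{\gamma}\varphi$ with $\beta+\gamma=\alpha$, $\beta\neq 0$; and second, the lower-order terms $C_{\ell\beta\gamma}^{\alpha} t^{\ell}\partial^{\beta} e^{t\Delta} x^{\gamma}\varphi$ with $|\beta+\gamma|\leq |\alpha|-2$ and $|\beta|+1\leq\ell\leq \tfrac{|\alpha|+|\beta|-|\gamma|}{2}$. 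So the task reduces to an $L^{1}$-estimate of each such term, and then summing over the finitely many multi-indices involved (with the constants absorbed into $C_m$). The only analytic input needed is the smoothing estimate of Lemma \ref{lem:Lp-Lq} with $p=q=1$: $\|\partial^{\beta} e^{t\Delta} \psi\|_{1}\leq t^{-|\beta|/2}\|\partial^{\beta} G_{1}\|_{1}\|\psi\|_{1}$, applied with $\psi = x^{\gamma}\varphi\in L^{1}$.

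For the first group of terms, I would bound $\|(-2t\partial)^{\beta} e^{t\Delta} x^{\gamma}\varphi\|_{1}\leq 2^{|\beta|} t^{|\beta|}\cdot t^{-|\beta|/2}\|\partial^{\beta} G_{1}\|_{1}\|x^{\gamma}\varphi\|_{1} = (\text{const})\, t^{|\beta|/2}\|x^{\gamma}\varphi\|_{1}$. Since $\beta\neq 0$ and $\beta+\gamma=\alpha$ with $|\alpha|=m$, we have $1\leq|\beta|\leq m$ and $|\gamma|=m-|\beta|\leq m-1$, so $t^{|\beta|/2}\leq t^{1/2}+t^{m/2}$ for $t>0$ (interpolating the power $|\beta|/2\in[1/2,m/2]$ between its endpoints), and $\|x^{\gamma}\varphi\|_{1}\leq \||x|^{|\gamma|}\varphi\|_{1}$. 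When $|\gamma|=m-1$ (i.e. $|\beta|=1$) this produces the term $t^{1/2}\||x|^{m-1}\varphi\|_{1}$; when $|\gamma|\leq m-2$ I would further estimate $\||x|^{|\gamma|}\varphi\|_{1}\leq \|\varphi\|_{1}+\||x|^{m-1}\varphi\|_{1}$ (splitting $\mathbb{R}^n$ into $|x|\leq 1$ and $|x|>1$), but in fact it is cleaner to bound $\||x|^{|\gamma|}\varphi\|_{1}\lesssim \|\varphi\|_1 + \||x|^{m-1}\varphi\|_1$ uniformly and let the $t$-powers sort themselves into the two stated buckets $t^{1/2}$ and $t^{m/2}$. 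For the second group, the same application of Lemma \ref{lem:Lp-Lq} gives $\|t^{\ell}\partial^{\beta} e^{t\Delta} x^{\gamma}\varphi\|_{1}\lesssim t^{\ell-|\beta|/2}\|x^{\gamma}\varphi\|_{1}$, and the constraints $|\beta|+1\leq\ell\leq\tfrac{|\alpha|+|\beta|-|\gamma|}{2}$ give the key exponent bounds $\ell-|\beta|/2\geq |\beta|/2+1\geq 3/2>1/2$ and $\ell-|\beta|/2\leq\tfrac{m-|\gamma|}{2}\leq m/2$, so again $t^{\ell-|\beta|/2}\leq t^{1/2}+t^{m/2}$ for all $t>0$; here $|\gamma|\leq m-2$ so $\|x^{\gamma}\varphi\|_1\lesssim\|\varphi\|_1+\||x|^{m-1}\varphi\|_1$, and one checks this contribution is dominated by the right-hand side of \eqref{eq:2.11}.

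The proof is essentially a bookkeeping exercise once \eqref{eq:2.9} is in hand: the substance is entirely in Theorem \ref{th:heat_commutator}, and what remains is to verify that every exponent of $t$ appearing in $R_{\alpha}(t)$ lies in the interval $[1/2,m/2]$ (for the leading terms, carrying the weight $|x|^{m-1}$) or can be lumped with the $\|\varphi\|_{1}$ term (for the genuinely lower-order terms), and that every weight $x^{\gamma}$ appearing has order $\leq m-1$. The main obstacle, to the extent there is one, is just confirming these index inequalities cleanly from the summation ranges in the definition of $R_{\alpha}(t)\varphi$ — in particular that the powers $t^{|\beta|/2}$ from the first sum and $t^{\ell-|\beta|/2}$ from the second never exceed $t^{m/2}$ and, when they are paired with a weight of order exactly $m-1$, equal $t^{1/2}$. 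After that, one sums over the finitely many $\alpha$ with $|\alpha|=m$ and over the finitely many $(\beta,\gamma,\ell)$, collects all the absolute constants $\|\partial^{\beta}G_1\|_1$, $\frac{\alpha!}{\beta!\gamma!}$, $|C_{\ell\beta\gamma}^{\alpha}|$ into a single $C_{m}>0$, and concludes. I would also note in passing that $x^{\alpha}e^{t\Delta}\varphi\in L^1(\mathbb{R}^n)$ is already guaranteed by Theorem \ref{th:heat_commutator}, so \eqref{eq:2.11} is a bona fide estimate between $L^1$ functions.
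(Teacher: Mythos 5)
Your overall skeleton matches the paper's: reduce to an $L^{1}$-bound of $R_{\alpha}(t)\varphi$ via Theorem \ref{th:heat_commutator} and Lemma \ref{lem:Lp-Lq} with $p=q=1$, then sum over the finitely many indices. However, there is a genuine gap exactly at the step you dismiss as bookkeeping. The right-hand side of \eqref{eq:2.11} allows the weighted norm $\bigl\lVert \left\lvert x \right\rvert^{m-1} \varphi \bigr\rVert_{1}$ to appear only with the factor $t^{\frac{1}{2}}$; it is \emph{not} of the symmetric form $(t^{\frac{1}{2}}+t^{\frac{m}{2}})(\lVert\varphi\rVert_{1}+\lVert\left\lvert x\right\rvert^{m-1}\varphi\rVert_{1})$. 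Your plan — bound $t^{\lvert\beta\rvert/2}\leq t^{\frac{1}{2}}+t^{\frac{m}{2}}$, bound $\lVert x^{\gamma}\varphi\rVert_{1}\lesssim\lVert\varphi\rVert_{1}+\lVert\left\lvert x\right\rvert^{m-1}\varphi\rVert_{1}$ "uniformly and let the $t$-powers sort themselves into the two stated buckets" — produces cross terms such as $t^{\lvert\beta\rvert/2}\lVert\left\lvert x\right\rvert^{m-1}\varphi\rVert_{1}$ with $\lvert\beta\rvert\geq 2$ (e.g.\ $\lvert\beta\rvert=m-1$, $\lvert\gamma\rvert=1$), and these are not dominated by $t^{\frac{1}{2}}\lVert\left\lvert x\right\rvert^{m-1}\varphi\rVert_{1}+(t^{\frac{1}{2}}+t^{\frac{m}{2}})\lVert\varphi\rVert_{1}$ for large $t$. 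So as written your argument proves only the weaker estimate, not \eqref{eq:2.11}; and the weaker estimate would not suffice where the theorem is actually used (in the induction for Theorem \ref{th:P_weight}, the weighted term must carry only $(t-s)^{\frac{1}{2}}$, otherwise the Gr\"onwall input grows too fast).

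The missing ingredient in the paper is an interpolation step: for $\lvert\gamma\rvert\leq m-1$ one writes, by H\"older,
\begin{align*}
	\bigl\lVert \left\lvert x \right\rvert^{\left\lvert \gamma \right\rvert} \varphi \bigr\rVert_{1} \leq \left\lVert \varphi \right\rVert_{1}^{1- \frac{\left\lvert \gamma \right\rvert}{m-1}} \bigl\lVert \left\lvert x \right\rvert^{m-1} \varphi \bigr\rVert_{1}^{\frac{\left\lvert \gamma \right\rvert}{m-1}},
\end{align*}
and then distributes the surplus power of $t$ onto the $\lVert\varphi\rVert_{1}$ factor only, e.g.
\begin{align*}
	t^{\frac{\left\lvert \beta \right\rvert}{2}} \left\lVert x^{\gamma} \varphi \right\rVert_{1} \leq t^{\frac{1}{2}} \left( t^{\frac{m-1}{2}} \left\lVert \varphi \right\rVert_{1} \right)^{\frac{\left\lvert \beta \right\rvert -1}{m-1}} \bigl\lVert \left\lvert x \right\rvert^{m-1} \varphi \bigr\rVert_{1}^{\frac{\left\lvert \gamma \right\rvert}{m-1}} \leq t^{\frac{m}{2}} \left\lVert \varphi \right\rVert_{1} +t^{\frac{1}{2}} \bigl\lVert \left\lvert x \right\rvert^{m-1} \varphi \bigr\rVert_{1},
\end{align*}
with an analogous (slightly more technical) manipulation for the terms $t^{\ell-\frac{\left\lvert\beta\right\rvert}{2}}\lVert x^{\gamma}\varphi\rVert_{1}$ from the second sum. (An equivalent fix is to split the weight at the parabolic scale $\left\lvert x\right\rvert\sim t^{\frac{1}{2}}$ rather than at $\left\lvert x\right\rvert\sim 1$.) Also, a minor slip: in the second sum $\beta=0$ is allowed, so the correct lower bound is $\ell-\frac{\left\lvert\beta\right\rvert}{2}\geq 1$, not $\geq\frac{3}{2}$; this does not affect anything, but the main issue above does need to be repaired before your argument yields the stated inequality.
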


By a simple calculation with the integral representation of the heat semigroup, we have
\begin{align}
	\label{eq:2.12}
	\sum_{\left\lvert \alpha \right\rvert =m} \left\lVert x^{\alpha} e^{t \Delta} \varphi \right\rVert_{1} \leq C_{m} \left( \left\lVert \left\lvert x \right\rvert^{m} \varphi \right\rVert_{1} +t^{\frac{m}{2}} \left\lVert \varphi \right\rVert_{1} \right)
\end{align}
for any $\varphi \in L_{m}^{1} \left( \mathbb{R}^{n} \right)$ and $t>0$ (cf. \cite[Lemma 2.1]{Ishige-Ishiwata-Kawakami}).
Therefore, by using \eqref{eq:2.11} instead of \eqref{eq:2.12}, we can control the heat semigroup with the weights of order $m$ by not $\left\lVert \left\lvert x \right\rvert^{m} \varphi \right\rVert_{1}$ but $\lVert \left\lvert x \right\rvert^{m-1} \varphi \rVert_{1}$.
This is crucial point to obtain weighted estimates of global solutions to \eqref{P} without the aid of the comparison principle and the iteration argument with some compactness arguments.

Here and hereafter, let $\left( e_{j}; j=1, \ldots, n \right)$ denote the standard basis of $\mathbb{R}^{n}$.
That is, for any $j \in \left\{ 1, \ldots, n \right\}$, $e_{j} \in \mathbb{R}^{n}$ is a unit vector whose components are $0$ except the $j$-th coordinate.

\begin{proof}[Proof of Theorem \ref{th:heat_commutator}]
	We regard Theorem \ref{th:heat_commutator} as the assertion with respect to $m \in \mathbb{Z}_{>0}$:
	\begin{itemize}
		\item[(A)$_{m}$]
			{\ }Let $\varphi \in L_{m}^{1} \left( \mathbb{R}^{n} \right)$ and let $\alpha \in \mathbb{Z}_{\geq 0}^{n}$ with $\left\lvert \alpha \right\rvert =m$.
			Then, $x^{\alpha} e^{t \Delta} \varphi \in L^{1} \left( \mathbb{R}^{n} \right)$ and \eqref{eq:2.9} holds \\
			{\ }for any $t>0$.
	\end{itemize}
	We show that (A)$_{m}$ is true for all $m \in \mathbb{Z}_{>0}$ by induction on $m$.
	First, we consider the case where $m=1$.
	Let $\varphi \in L_{1}^{1} \left( \mathbb{R}^{n} \right)$, $t>0$ and $\alpha \in \mathbb{Z}_{\geq 0}^{n}$ with $\left\lvert \alpha \right\rvert =1$.
	Then, there exists $j \in \left\{ 1, \ldots, n \right\}$ such that $\alpha =e_{j}$.
	Moreover, since $\varphi$, $x_{j} \varphi \in L^{1} \left( \mathbb{R}^{n} \right)$, we have $\partial_{j} e^{t \Delta} \varphi$, $e^{t \Delta} x_{j} \varphi \in L^{1} \left( \mathbb{R}^{n} \right)$ and
	\begin{align*}
		\left( e^{t \Delta} x_{j} \varphi -2t \partial_{j} e^{t \Delta} \varphi \right) \left( x \right) &= \int_{\mathbb{R}^{n}} G_{t} \left( x-y \right) y_{j} \varphi \left( y \right) dy-2t \partial_{j} \int_{\mathbb{R}^{n}} G_{t} \left( x-y \right) \varphi \left( y \right) dy \\
		&= \int_{\mathbb{R}^{n}} G_{t} \left( x-y \right) y_{j} \varphi \left( y \right) dy+ \int_{\mathbb{R}^{n}} \left( x_{j} -y_{j} \right) G_{t} \left( x-y \right) \varphi \left( y \right) dy \\
		&=x_{j} \int_{\mathbb{R}^{n}} G_{t} \left( x-y \right) \varphi \left( y \right) dy \\
		&=x_{j} \left( e^{t \Delta} \varphi \right) \left( x \right).
	\end{align*}
	Thus, we obtain $x_{j} e^{t \Delta} \varphi \in L^{1} \left( \mathbb{R}^{n} \right)$ and
	\begin{align*}
		x_{j} e^{t \Delta} \varphi -e^{t \Delta} x_{j} \varphi =-2t \partial_{j} e^{t \Delta} \varphi,
	\end{align*}
	whence follows (A)$_{1}$.

	Next, we assume that (A)$_{m}$ holds for some $m \in \mathbb{Z}_{>0}$.
	Let $\varphi \in L_{m+1}^{1} \left( \mathbb{R}^{n} \right)$, $t>0$ and $\alpha' \in \mathbb{Z}_{\geq 0}^{n}$ with $\left\lvert \alpha' \right\rvert =m+1$.
	Then, there exist $\alpha \in \mathbb{Z}_{\geq 0}^{n}$ with $\left\lvert \alpha \right\rvert =m$ and $j \in \left\{ 1, \ldots, n \right\}$ such that $\alpha' = \alpha +e_{j}$.
	Moreover, from the fact that $x^{\alpha} \varphi$, $x_{j} x^{\alpha} \varphi \in L^{1} \left( \mathbb{R}^{n} \right)$ and (A)$_{1}$, we see that $x_{j} e^{t \Delta} x^{\alpha} \varphi \in L^{1} \left( \mathbb{R}^{n} \right)$ and
	\begin{align}
		\label{eq:2.13}
		x_{j} e^{t \Delta} x^{\alpha} \varphi &=e^{t \Delta} x_{j} x^{\alpha} \varphi -2t \partial_{j} e^{t \Delta} x^{\alpha} \varphi =e^{t \Delta} x^{\alpha'} \varphi -2t \partial_{j} e^{t \Delta} x^{\alpha} \varphi.
	\end{align}
	Now, we show $x_{j} R_{\alpha} \left( t \right) \varphi \in L^{1} \left( \mathbb{R}^{n} \right)$.
	For this purpose, it suffices to prove that $x_{j} \partial^{\beta} e^{t \Delta} x^{\gamma} \varphi \in L^{1} \left( \mathbb{R}^{n} \right)$ for any $\beta, \gamma \in \mathbb{Z}_{\geq 0}^{n}$ with $\beta + \gamma \leq \alpha$ and $\left\lvert \gamma \right\rvert \leq m-1$.
	Let $\gamma \in \mathbb{Z}_{\geq 0}^{n}$ with $\left\lvert \gamma \right\rvert \leq m-1$.
	Then, from the fact that $x^{\gamma} \varphi$, $x_{j} x^{\gamma} \varphi \in L^{1} \left( \mathbb{R}^{n} \right)$ and (A)$_{1}$, it follows that $x_{j} e^{t \Delta} x^{\gamma} \varphi \in L^{1} \left( \mathbb{R}^{n} \right)$ and
	\begin{align}
		\label{eq:2.14}
		x_{j} e^{t \Delta} x^{\gamma} \varphi =e^{t \Delta} x_{j} x^{\gamma} \varphi -2t \partial_{j} e^{t \Delta} x^{\gamma} \varphi =e^{t \Delta} x^{\gamma +e_{j}} \varphi -2t \partial_{j} e^{t \Delta} x^{\gamma} \varphi.
	\end{align}
	Furthermore, since the right hand side on the above identity belongs to $W^{m, 1} \left( \mathbb{R}^{n} \right)$, for any $\beta \in \mathbb{Z}_{\geq 0}^{n}$ with $\left\lvert \beta \right\rvert \leq m$, we have $\partial^{\beta} \left( x_{j} e^{t \Delta} x^{\gamma} \varphi \right) \in L^{1} \left( \mathbb{R}^{n} \right)$ and
	\begin{align*}
		\partial^{\beta} \left( x_{j} e^{t \Delta} x^{\gamma} \varphi \right) &= \partial^{\beta} \left( e^{t \Delta} x^{\gamma +e_{j}} \varphi -2t \partial_{j} e^{t \Delta} x^{\gamma} \varphi \right) \\
		&= \partial^{\beta} e^{t \Delta} x^{\gamma +e_{j}} \varphi -2t \partial^{\beta +e_{j}} e^{t \Delta} x^{\gamma} \varphi.
	\end{align*}
	On the other hand, by a simple calculation, we obtain
	\begin{align}
		\label{eq:2.15}
		\partial^{\beta} \left( x_{j} e^{t \Delta} x^{\gamma} \varphi \right) = \begin{dcases}
			x_{j} \partial^{\beta} e^{t \Delta} x^{\gamma} \varphi + \beta_{j} \partial^{\beta -e_{j}} e^{t \Delta} x^{\gamma} \varphi &\quad \text{if} \quad \beta_{j} \geq 1, \\
			x_{j} \partial^{\beta} e^{t \Delta} x^{\gamma} \varphi &\quad \text{if} \quad \beta_{j} =0,
		\end{dcases}
	\end{align}
	where $\beta_{j}$ is the $j$-th component of $\beta$.
	In any case, we conclude that $x_{j} \partial^{\beta} e^{t \Delta} x^{\gamma} \varphi \in L^{1} \left( \mathbb{R}^{n} \right)$, whence follows $x_{j} R_{\alpha} \left( t \right) \varphi \in L^{1} \left( \mathbb{R}^{n} \right)$.
	Therefore, from the fact that $x_{j} e^{t \Delta} x^{\alpha} \varphi$, $x_{j} R_{\alpha} \left( t \right) \varphi \in L^{1} \left( \mathbb{R}^{n} \right)$, (A)$_{m}$, and \eqref{eq:2.13}, it follows that $x^{\alpha'} e^{t \Delta} \varphi \in L^{1} \left( \mathbb{R}^{n} \right)$ and
	\begin{align*}
		x^{\alpha'} e^{t \Delta} \varphi &=x_{j} x^{\alpha} e^{t \Delta} \varphi \\
		&=x_{j} e^{t \Delta} x^{\alpha} \varphi +x_{j} R_{\alpha} \left( t \right) \varphi \\
		&=e^{t \Delta} x^{\alpha'} \varphi -2t \partial_{j} e^{t \Delta} x^{\alpha} \varphi +x_{j} R_{\alpha} \left( t \right) \varphi.
	\end{align*}
	Finally, to complete the proof, we show that the terms $-2t \partial_{j} e^{t \Delta} x^{\alpha} \varphi$ and $x_{j} R_{\alpha} \left( t \right) \varphi$ in the above identity are represented as
	\begin{align*}
		R_{\alpha'} \left( t \right) \varphi = \sum_{\substack{\beta' + \gamma' = \alpha' \\ \beta' \neq 0}} \frac{\alpha' !}{\beta' ! \gamma' !} \left( -2t \partial \right)^{\beta'} e^{t \Delta} x^{\gamma'} \varphi + \sum_{\substack{\beta' + \gamma' \leq \alpha', {\ } \left\lvert \beta' + \gamma' \right\rvert \leq \left\lvert \alpha' \right\rvert -2 \\ \left\lvert \beta' \right\rvert +1 \leq \ell' \leq \frac{\lvert \alpha' \rvert + \lvert \beta' \rvert - \lvert \gamma' \rvert}{2}}} C_{\ell' \beta' \gamma'}^{\alpha'} t^{\ell'} \partial^{\beta'} e^{t \Delta} x^{\gamma'} \varphi.
	\end{align*}
	It is clear that the term $-2t \partial_{j} e^{t \Delta} x^{\alpha} \varphi$ is a part of the first sum in $R_{\alpha'} \left( t \right) \varphi$ with $\left( \beta', \gamma' \right) = \left( e_{j}, \alpha \right)$.
	Let $\beta, \gamma \in \mathbb{Z}_{\geq 0}^{n}$ with $\beta + \gamma = \alpha$ and $\beta \neq 0$.
	If $\beta_{j} \geq 1$, then it follows from \eqref{eq:2.14} and \eqref{eq:2.15} that
	\begin{align}
		\label{eq:2.16}
		x_{j} \left( -2t \partial \right)^{\beta} e^{t \Delta} x^{\gamma} \varphi &= \left( -2t \partial \right)^{\beta} \left( x_{j} e^{t \Delta} x^{\gamma} \varphi \right) - \beta_{j} \left( -2t \right)^{\left\lvert \beta \right\rvert} \partial^{\beta -e_{j}} e^{t \Delta} x^{\gamma} \varphi \nonumber \\
		&= \left( -2t \partial \right)^{\beta} \left( e^{t \Delta} x^{\gamma +e_{j}} \varphi -2t \partial_{j} e^{t \Delta} x^{\gamma} \varphi \right) - \beta_{j} \left( -2 \right)^{\left\lvert \beta \right\rvert} t^{\left\lvert \beta \right\rvert} \partial^{\beta -e_{j}} e^{t \Delta} x^{\gamma} \varphi \nonumber \\
		&= \left( -2t \partial \right)^{\beta} e^{t \Delta} x^{\gamma +e_{j}} \varphi + \left( -2t \partial \right)^{\beta +e_{j}} e^{t \Delta} x^{\gamma} \varphi - \beta_{j} \left( -2 \right)^{\left\lvert \beta \right\rvert} t^{\left\lvert \beta \right\rvert} \partial^{\beta -e_{j}} e^{t \Delta} x^{\gamma} \varphi.
	\end{align}
	Similarly, if $\beta_{j} =0$, then we have
	\begin{align}
		\label{eq:2.17}
		x_{j} \left( -2t \partial \right)^{\beta} e^{t \Delta} x^{\gamma} \varphi &= \left( -2t \partial \right)^{\beta} \left( x_{j} e^{t \Delta} x^{\gamma} \varphi \right) \nonumber \\
		&= \left( -2t \partial \right)^{\beta} e^{t \Delta} x^{\gamma +e_{j}} \varphi + \left( -2t \partial \right)^{\beta +e_{j}} e^{t \Delta} x^{\gamma} \varphi.
	\end{align}
	The terms $\left( -2t \partial \right)^{\beta} e^{t \Delta} x^{\gamma +e_{j}} \varphi$ and $\left( -2t \partial \right)^{\beta +e_{j}} e^{t \Delta} x^{\gamma} \varphi$ in \eqref{eq:2.16} or \eqref{eq:2.17} are parts of the first sum in $R_{\alpha'} \left( t \right) \varphi$ with $\left( \beta', \gamma' \right) = \left( \beta, \gamma +e_{j} \right)$, $\left( \beta +e_{j}, \gamma \right)$, respectively.
	In addition, since
	\begin{align*}
		&-2t \partial_{j} e^{t \Delta} x^{\alpha} \varphi + \sum_{\substack{\beta + \gamma = \alpha \\ \beta \neq 0}} \frac{\alpha !}{\beta ! \gamma !} \left( \left( -2t \partial \right)^{\beta} e^{t \Delta} x^{\gamma +e_{j}} \varphi + \left( -2t \partial \right)^{\beta +e_{j}} e^{t \Delta} x^{\gamma} \varphi \right) \\
		&\hspace{1cm} = \sum_{\substack{\beta + \gamma = \alpha \\ \beta \neq 0}} \frac{\alpha !}{\beta ! \gamma !} \left( -2t \partial \right)^{\beta} e^{t \Delta} x^{\gamma +e_{j}} \varphi + \sum_{\beta + \gamma = \alpha} \frac{\alpha !}{\beta ! \gamma !} \left( -2t \partial \right)^{\beta +e_{j}} e^{t \Delta} x^{\gamma} \varphi \\
		&\hspace{1cm} = \sum_{\substack{\beta' + \gamma' = \alpha' \\ 0 \neq \beta' \leq \alpha}} \frac{\alpha !}{\beta' ! \left( \gamma' -e_{j} \right) !} \left( -2t \partial \right)^{\beta'} e^{t \Delta} x^{\gamma'} \varphi + \sum_{\substack{\beta' + \gamma' = \alpha' \\ \gamma' \leq \alpha}} \frac{\alpha !}{\left( \beta' -e_{j} \right) ! \gamma' !} \left( -2t \partial \right)^{\beta'} e^{t \Delta} x^{\gamma'} \varphi \\
		&\hspace{1cm} = \sum_{\substack{\beta' + \gamma' = \alpha' \\ \beta' \neq 0}} \left( \binom{\alpha}{\beta'} + \binom{\alpha}{\gamma'} \right) \left( -2t \partial \right)^{\beta'} e^{t \Delta} x^{\gamma'} \varphi \\
		&\hspace{1cm} = \sum_{\substack{\beta' + \gamma' = \alpha' \\ \beta' \neq 0}} \binom{\alpha'}{\beta'} \left( -2t \partial \right)^{\beta'} e^{t \Delta} x^{\gamma'} \varphi \\
		&\hspace{1cm} = \sum_{\substack{\beta' + \gamma' = \alpha' \\ \beta' \neq 0}} \frac{\alpha' !}{\beta' ! \gamma' !} \left( -2t \partial \right)^{\beta'} e^{t \Delta} x^{\gamma'} \varphi,
	\end{align*}
	all components of the first sum in $R_{\alpha'} \left( t \right) \varphi$ have appeared.
	On the other hand, by taking $\left( \ell', \beta', \gamma' \right) = \left( \left\lvert \beta \right\rvert, \beta -e_{j}, \gamma \right)$ with $e_{j} \leq \beta$, we have
	\begin{itemize}
		\item
			$\beta ' + \gamma' = \beta -e_{j} + \gamma = \alpha -e_{j} = \alpha' -2e_{j} \leq \alpha'$,
		\item
			$\left\lvert \beta' + \gamma' \right\rvert = \left\lvert \beta -e_{j} + \gamma \right\rvert = \left\lvert \alpha \right\rvert -1= \left\lvert \alpha' \right\rvert -2$,
		\item
			$\left\lvert \beta' \right\rvert +1= \left\lvert \beta -e_{j} \right\rvert +1= \left\lvert \beta \right\rvert = \ell'$,
		\item
			$\ell' \leq \dfrac{\left\lvert \alpha' \right\rvert + \left\lvert \beta' \right\rvert - \left\lvert \gamma' \right\rvert}{2} {\ } \Leftrightarrow {\ } \left\lvert \beta \right\rvert \leq \dfrac{\left( \left\lvert \alpha \right\rvert +1 \right) + \left( \left\lvert \beta \right\rvert -1 \right) - \left\lvert \gamma \right\rvert}{2} = \dfrac{\left\lvert \alpha \right\rvert + \left\lvert \beta \right\rvert - \left\lvert \gamma \right\rvert}{2} = \left\lvert \beta \right\rvert$.
	\end{itemize}
	This implies that the term $- \beta_{j} \left( -2 \right)^{\left\lvert \beta \right\rvert} t^{\left\lvert \beta \right\rvert} \partial^{\beta -e_{j}} e^{t \Delta} x^{\gamma} \varphi$ in \eqref{eq:2.16} is a part of the second sum in $R_{\alpha'} \left( t \right) \varphi$ with $\left( \ell', \beta', \gamma' \right) = \left( \left\lvert \beta \right\rvert, \beta -e_{j}, \gamma \right)$.
	Therefore, we conclude that the components of the first sum in $x_{j} R_{\alpha} \left( t \right) \varphi$ are represented as parts of $R_{\alpha'} \left( t \right) \varphi$.
	Next, we take $\beta, \gamma \in \mathbb{Z}_{\geq 0}^{n}$ and $\ell \in \mathbb{Z}_{\geq 0}$ satisfying
	\begin{align*}
		\beta + \gamma \leq \alpha, \qquad \left\lvert \beta + \gamma \right\rvert \leq \left\lvert \alpha \right\rvert -2, \qquad \left\lvert \beta \right\rvert +1 \leq \ell \leq \frac{\left\lvert \alpha \right\rvert + \left\lvert \beta \right\rvert - \left\lvert \gamma \right\rvert}{2}.
	\end{align*}
	If $\beta_{j} \geq 1$, then it follows from \eqref{eq:2.14} and \eqref{eq:2.15} that
	\begin{align}
		\label{eq:2.18}
		x_{j} t^{\ell} \partial^{\beta} e^{t \Delta} x^{\gamma} \varphi &=t^{\ell} \partial^{\beta} \left( x_{j} e^{t \Delta} x^{\gamma} \varphi \right) - \beta_{j} t^{\ell} \partial^{\beta -e_{j}} e^{t \Delta} x^{\gamma} \varphi \nonumber \\
		&=t^{\ell} \partial^{\beta} \left( e^{t \Delta} x^{\gamma +e_{j}} \varphi -2t \partial_{j} e^{t \Delta} x^{\gamma} \varphi \right) - \beta_{j} t^{\ell} \partial^{\beta -e_{j}} e^{t \Delta} x^{\gamma} \varphi \nonumber \\
		&=t^{\ell} \partial^{\beta} e^{t \Delta} x^{\gamma +e_{j}} \varphi -2t^{\ell +1} \partial^{\beta +e_{j}} e^{t \Delta} x^{\gamma} \varphi - \beta_{j} t^{\ell} \partial^{\beta -e_{j}} e^{t \Delta} x^{\gamma} \varphi.
	\end{align}
	In the same way, if $\beta_{j} =0$, then we have
	\begin{align}
		\label{eq:2.19}
		x_{j} t^{\ell} \partial^{\beta} e^{t \Delta} x^{\gamma} \varphi &=t^{\ell} \partial^{\beta} \left( x_{j} e^{t \Delta} x^{\gamma} \varphi \right) \nonumber \\
		&=t^{\ell} \partial^{\beta} e^{t \Delta} x^{\gamma +e_{j}} \varphi -2t^{\ell +1} \partial^{\beta +e_{j}} e^{t \Delta} x^{\gamma} \varphi.
	\end{align}
	Taking $\left( \ell', \beta', \gamma' \right) = \left( \ell, \beta, \gamma +e_{j} \right)$, we have
	\begin{itemize}
		\item
			$\beta ' + \gamma' = \beta + \gamma +e_{j} \leq \alpha +e_{j} = \alpha'$,
		\item
			$\left\lvert \beta' + \gamma' \right\rvert = \left\lvert \beta + \gamma +e_{j} \right\rvert = \left\lvert \beta + \gamma \right\rvert +1 \leq \left\lvert \alpha \right\rvert -1= \left\lvert \alpha' \right\rvert -2$,
		\item
			$\left\lvert \beta' \right\rvert +1= \left\lvert \beta \right\rvert +1 \leq \ell = \ell'$,
		\item
			$\ell' \leq \dfrac{\left\lvert \alpha' \right\rvert + \left\lvert \beta' \right\rvert - \left\lvert \gamma' \right\rvert}{2} {\ } \Leftrightarrow {\ } \ell \leq \dfrac{\left( \left\lvert \alpha \right\rvert +1 \right) + \left\lvert \beta \right\rvert - \left( \left\lvert \gamma \right\rvert +1 \right)}{2} = \dfrac{\left\lvert \alpha \right\rvert + \left\lvert \beta \right\rvert - \left\lvert \gamma \right\rvert}{2}$.
	\end{itemize}
	Hence, the term $t^{\ell} \partial^{\beta} e^{t \Delta} x^{\gamma +e_{j}} \varphi$ in \eqref{eq:2.18} or \eqref{eq:2.19} is a part of the second sum in $R_{\alpha'} \left( t \right) \varphi$ with $\left( \ell', \beta', \gamma' \right) = \left( \ell, \beta, \gamma +e_{j} \right)$.
	Next, taking $\left( \ell', \beta', \gamma' \right) = \left( \ell +1, \beta +e_{j}, \gamma \right)$, we obtain
	\begin{itemize}
		\item
			$\beta ' + \gamma' = \beta +e_{j} + \gamma \leq \alpha +e_{j} = \alpha'$,
		\item
			$\left\lvert \beta' + \gamma' \right\rvert = \left\lvert \beta +e_{j} + \gamma \right\rvert = \left\lvert \beta + \gamma \right\rvert +1 \leq \left\lvert \alpha \right\rvert -1= \left\lvert \alpha' \right\rvert -2$,
		\item
			$\left\lvert \beta' \right\rvert +1= \left\lvert \beta +e_{j} \right\rvert +1= \left( \left\lvert \beta \right\rvert +1 \right) +1 \leq \ell +1= \ell'$,
		\item
			$\ell' \leq \dfrac{\left\lvert \alpha' \right\rvert + \left\lvert \beta' \right\rvert - \left\lvert \gamma' \right\rvert}{2} {\ } \Leftrightarrow {\ } \ell \leq \dfrac{\left( \left\lvert \alpha \right\rvert +1 \right) + \left( \left\lvert \beta \right\rvert +1 \right) - \left\lvert \gamma \right\rvert}{2} -1= \dfrac{\left\lvert \alpha \right\rvert + \left\lvert \beta \right\rvert - \left\lvert \gamma \right\rvert}{2}$.
	\end{itemize}
	This implies that the term $-2t^{\ell +1} \partial^{\beta +e_{j}} e^{t \Delta} x^{\gamma} \varphi$ in \eqref{eq:2.18} or \eqref{eq:2.19} is a part of the second sum in $R_{\alpha'} \left( t \right) \varphi$ with $\left( \ell', \beta', \gamma' \right) = \left( \ell +1, \beta +e_{j}, \gamma \right)$.
	Finally, taking $\left( \ell', \beta', \gamma' \right) = \left( \ell, \beta -e_{j}, \gamma \right)$ with $e_{j} \leq \beta$, we have
	\begin{itemize}
		\item
			$\beta ' + \gamma' = \beta -e_{j} + \gamma \leq \alpha -e_{j} = \alpha' -2e_{j} \leq \alpha'$,
		\item
			$\left\lvert \beta' + \gamma' \right\rvert = \left\lvert \beta -e_{j} + \gamma \right\rvert = \left\lvert \beta + \gamma \right\rvert -1 \leq \left\lvert \alpha \right\rvert -3= \left\lvert \alpha' \right\rvert -4 \leq \left\lvert \alpha' \right\rvert -2$,
		\item
			$\left\lvert \beta' \right\rvert +1= \left\lvert \beta -e_{j} \right\rvert +1 = \left\lvert \beta \right\rvert \leq \ell -1 \leq \ell = \ell'$,
		\item
			$\ell' \leq \dfrac{\left\lvert \alpha' \right\rvert + \left\lvert \beta' \right\rvert - \left\lvert \gamma' \right\rvert}{2} {\ } \Leftrightarrow {\ } \ell \leq \dfrac{\left( \left\lvert \alpha \right\rvert +1 \right) + \left( \left\lvert \beta \right\rvert -1 \right) - \left\lvert \gamma \right\rvert}{2} = \dfrac{\left\lvert \alpha \right\rvert + \left\lvert \beta \right\rvert - \left\lvert \gamma \right\rvert}{2}$.
	\end{itemize}
	Thus, the term $- \beta_{j} t^{\ell} \partial^{\beta -e_{j}} e^{t \Delta} x^{\gamma} \varphi$ in \eqref{eq:2.18} is a part of the second sum in $R_{\alpha'} \left( t \right) \varphi$ with $\left( \ell', \beta', \gamma' \right) = \left( \ell, \beta -e_{j}, \gamma \right)$, whence follows that each component of the second sum in $x_{j} R_{\alpha} \left( t \right) \varphi$ is represented as a component of the second sum in $R_{\alpha'} \left( t \right) \varphi$.
	This completes the proof.
\end{proof}

\begin{proof}[Proof of Theorem \ref{th:heat_commutator_esti}]
	By virtue of Theorem \ref{th:heat_commutator}, it is sufficient to estimate $R_{\alpha} \left( t \right) \varphi$ for any $t>0$.
	For the case where $m=1$, it follows from Lemma \ref{lem:Lp-Lq} that
	\begin{align*}
		\sum_{\left\lvert \alpha \right\rvert =1} \left\lVert R_{\alpha} \left( t \right) \varphi \right\rVert_{1} &= \sum_{j=1}^{n} \left\lVert -2t \partial_{j} e^{t \Delta} \varphi \right\rVert_{1} \\
		&\leq 2t^{\frac{1}{2}} \sum_{j=1}^{n} \left\lVert \partial_{j} G_{1} \right\rVert_{1} \left\lVert \varphi \right\rVert_{1} \\
		&=2t^{\frac{1}{2}} \left\lVert \nabla G_{1} \right\rVert_{1} \left\lVert \varphi \right\rVert_{1}.
	\end{align*}
	Next, we consider the case where $m \geq 2$.
	Here and hereafter, different positive constants independent of $t$ are denoted by the same letter $C$.
	By Lemma \ref{lem:Lp-Lq}, for any $\alpha \in \mathbb{Z}_{\geq 0}^{n}$ with $\left\lvert \alpha \right\rvert =m$, we have
	\begin{align*}
		\left\lVert R_{\alpha} \left( t \right) \varphi \right\rVert_{1} &\leq C \sum_{\substack{\beta + \gamma = \alpha \\ \beta \neq 0}} t^{\left\lvert \beta \right\rvert} \bigl\lVert \partial^{\beta} e^{t \Delta} x^{\gamma} \varphi \bigr\rVert_{1} +C \sum_{\substack{\beta + \gamma \leq \alpha, {\ } \left\lvert \beta + \gamma \right\rvert \leq m-2 \\ \left\lvert \beta \right\rvert +1 \leq \ell \leq \frac{m+ \left\lvert \beta \right\rvert - \left\lvert \gamma \right\rvert}{2}}} t^{\ell} \bigl\lVert \partial^{\beta} e^{t \Delta} x^{\gamma} \varphi \bigr\rVert_{1} \\
		&\leq C \sum_{\substack{\left\lvert \beta \right\rvert + \left\lvert \gamma \right\rvert =m \\ \left\lvert \beta \right\rvert \geq 1}} t^{\frac{\left\lvert \beta \right\rvert}{2}} \left\lVert x^{\gamma} \varphi \right\rVert_{1} +C \sum_{\substack{\left\lvert \beta \right\rvert + \left\lvert \gamma \right\rvert \leq m-2 \\ \left\lvert \beta \right\rvert +1 \leq \ell \leq \frac{m+ \left\lvert \beta \right\rvert - \left\lvert \gamma \right\rvert}{2}}} t^{\ell - \frac{\left\lvert \beta \right\rvert}{2}} \left\lVert x^{\gamma} \varphi \right\rVert_{1},
	\end{align*}
	whence follows
	\begin{align}
		\label{eq:2.20}
		\sum_{\left\lvert \alpha \right\rvert =m} \left\lVert R_{\alpha} \left( t \right) \varphi \right\rVert_{1} &\leq C \sum_{\substack{\left\lvert \beta \right\rvert + \left\lvert \gamma \right\rvert =m \\ \left\lvert \beta \right\rvert \geq 1}} t^{\frac{\left\lvert \beta \right\rvert}{2}} \left\lVert x^{\gamma} \varphi \right\rVert_{1} +C \sum_{j=0}^{m-2} \sum_{\left\lvert \beta \right\rvert + \left\lvert \gamma \right\rvert =j} \sum_{\left\lvert \beta \right\rvert +1 \leq \ell \leq \frac{m+ \left\lvert \beta \right\rvert - \left\lvert \gamma \right\rvert}{2}} t^{\ell - \frac{\left\lvert \beta \right\rvert}{2}} \left\lVert x^{\gamma} \varphi \right\rVert_{1}.
	\end{align}
	Now, let $\beta, \gamma \in \mathbb{Z}_{\geq 0}^{n}$ with $\left\lvert \beta \right\rvert + \left\lvert \gamma \right\rvert =m$ and $\left\lvert \beta \right\rvert \geq 1$.
	Then,
	\begin{align*}
		\frac{\left\lvert \beta \right\rvert -1}{m-1} + \frac{\left\lvert \gamma \right\rvert}{m-1} =1, \qquad 0 \leq \frac{\left\lvert \beta \right\rvert -1}{m-1} \leq 1, \qquad 0 \leq \frac{\left\lvert \gamma \right\rvert}{m-1} \leq 1.
	\end{align*}
	Therefore, H\"{o}lder's inequality implies
	\begin{align}
		\label{eq:2.21}
		t^{\frac{\left\lvert \beta \right\rvert}{2}} \left\lVert x^{\gamma} \varphi \right\rVert_{1} &\leq t^{\frac{\left\lvert \beta \right\rvert}{2}} \bigl\lVert \left\lvert x \right\rvert^{\left\lvert \gamma \right\rvert} \varphi \bigr\rVert_{1} \nonumber \\
		&\leq t^{\frac{\left\lvert \beta \right\rvert}{2}} \left\lVert \varphi \right\rVert_{1}^{\frac{\left\lvert \beta \right\rvert -1}{m-1}} \bigl\lVert \left\lvert x \right\rvert^{m-1} \varphi \bigr\rVert_{1}^{\frac{\left\lvert \gamma \right\rvert}{m-1}} \nonumber \\
		&=t^{\frac{1}{2}} \left( t^{\frac{m-1}{2}} \left\lVert \varphi \right\rVert_{1} \right)^{\frac{\left\lvert \beta \right\rvert -1}{m-1}} \bigl\lVert \left\lvert x \right\rvert^{m-1} \varphi \bigr\rVert_{1}^{\frac{\left\lvert \gamma \right\rvert}{m-1}} \nonumber \\
		&\leq t^{\frac{1}{2}} \left( t^{\frac{m-1}{2}} \left\lVert \varphi \right\rVert_{1} + \bigl\lVert \left\lvert x \right\rvert^{m-1} \varphi \bigr\rVert_{1} \right) \nonumber \\
		&=t^{\frac{m}{2}} \left\lVert \varphi \right\rVert_{1} +t^{\frac{1}{2}} \bigl\lVert \left\lvert x \right\rvert^{m-1} \varphi \bigr\rVert_{1}.
	\end{align}
	Similarly, we take $j, \ell \in \mathbb{Z}_{\geq 0}$ and $\beta, \gamma \in \mathbb{Z}_{\geq 0}^{n}$ satisfying
	\begin{align*}
		0 \leq j \leq m-2, \qquad \left\lvert \beta \right\rvert + \left\lvert \gamma \right\rvert =j, \qquad \left\lvert \beta \right\rvert +1 \leq \ell \leq \frac{m+ \left\lvert \beta \right\rvert - \left\lvert \gamma \right\rvert}{2}.
	\end{align*}
	Then,
	\begin{gather*}
		\frac{m+ \left\lvert \beta \right\rvert -j-1}{m-1} + \frac{\left\lvert \gamma \right\rvert}{m-1} =1, \qquad 0 \leq \frac{m+ \left\lvert \beta \right\rvert -j-1}{m-1} \leq 1, \qquad 0 \leq \frac{\left\lvert \gamma \right\rvert}{m-1} \leq 1, \\
		\left\lvert \beta \right\rvert +1 \leq 2 \ell - \left\lvert \beta \right\rvert -1 \leq m+ \left\lvert \beta \right\rvert - \left( \left\lvert \gamma \right\rvert + \left\lvert \beta \right\rvert \right) -1=m+ \left\lvert \beta \right\rvert -j-1.
	\end{gather*}
	Hence, we obtain
	\begin{align}
		\label{eq:2.22}
		t^{\ell - \frac{\left\lvert \beta \right\rvert}{2}} \left\lVert x^{\gamma} \varphi \right\rVert_{1} &\leq t^{\ell - \frac{\left\lvert \beta \right\rvert}{2}} \bigl\lVert \left\lvert x \right\rvert^{\left\lvert \gamma \right\rvert} \varphi \bigr\rVert_{1} \nonumber \\
		&\leq t^{\frac{2 \ell - \left\lvert \beta \right\rvert}{2}} \left\lVert \varphi \right\rVert_{1}^{\frac{m+ \left\lvert \beta \right\rvert -j-1}{m-1}} \bigl\lVert \left\lvert x \right\rvert^{m-1} \varphi \bigr\rVert_{1}^{\frac{\left\lvert \gamma \right\rvert}{m-1}} \nonumber \\
		&=t^{\frac{1}{2}} \left( t^{\frac{\left( m-1 \right) \left( 2 \ell - \left\lvert \beta \right\rvert -1 \right)}{2 \left( m+ \left\lvert \beta \right\rvert -j-1 \right)}} \left\lVert \varphi \right\rVert_{1} \right)^{\frac{m+ \left\lvert \beta \right\rvert -j-1}{m-1}} \bigl\lVert \left\lvert x \right\rvert^{m-1} \varphi \bigr\rVert_{1}^{\frac{\left\lvert \gamma \right\rvert}{m-1}} \nonumber \\
		&\leq t^{\frac{1}{2}} \left( t^{\frac{\left( m-1 \right) \left( 2 \ell - \left\lvert \beta \right\rvert -1 \right)}{2 \left( m+ \left\lvert \beta \right\rvert -j-1 \right)}} \left\lVert \varphi \right\rVert_{1} + \bigl\lVert \left\lvert x \right\rvert^{m-1} \varphi \bigr\rVert_{1} \right) \nonumber \\
		&\leq \left( t^{\frac{1}{2}} + t^{\frac{m}{2}} \right) \left\lVert \varphi \right\rVert_{1} +t^{\frac{1}{2}} \bigl\lVert \left\lvert x \right\rvert^{m-1} \varphi \bigr\rVert_{1}.
	\end{align}
	Here, we have used the inequalities
	\begin{align*}
		t^{\frac{\left( m-1 \right) \left( 2 \ell - \left\lvert \beta \right\rvert -1 \right)}{2 \left( m+ \left\lvert \beta \right\rvert -j-1 \right)}} \leq \begin{dcases}
			1, &\quad 0<t \leq 1, \\
			t^{\frac{m-1}{2}}, &\quad t>1,
		\end{dcases}
	\end{align*}
	which follow from the following relation of exponents:
	\begin{align*}
		0 \leq \frac{\left\lvert \beta \right\rvert +1}{m+ \left\lvert \beta \right\rvert -j-1} \leq \frac{2 \ell - \left\lvert \beta \right\rvert -1}{m+ \left\lvert \beta \right\rvert -j-1} \leq 1.
	\end{align*}
	Finally, combining \eqref{eq:2.20}, \eqref{eq:2.21}, and \eqref{eq:2.22}, we arrive at the desired estimate.
\end{proof}

\begin{rem} \label{rem:heat_remainder_esti}
	From the proofs of Theorems \ref{th:heat_commutator} and \ref{th:heat_commutator_esti}, we see that if $m \in \mathbb{Z}_{>0}$ and $\varphi \in L^{1}_{m} \left( \mathbb{R}^{n} \right)$, then for any $\alpha \in \mathbb{Z}_{\geq 0}^{n}$ with $\left\lvert \alpha \right\rvert =m$, $j \in \left\{ 1, \ldots, n \right\}$ and $t>0$, $x_{j} R_{\alpha} \left( t \right) \varphi$ is represented as a part of $R_{\alpha +e_{j}} \left( t \right) \varphi$ and the estimate
	\begin{align*}
		\sum_{j=1}^{n} \sum_{\left\lvert \alpha \right\rvert =m} \left\lVert x_{j} R_{\alpha} \left( t \right) \varphi \right\rVert_{1} \leq C_{m+1} \left\{ t^{\frac{1}{2}} \left\lVert \left\lvert x \right\rvert^{m} \varphi \right\rVert_{1} + \left( t^{\frac{1}{2}} +t^{\frac{m+1}{2}} \right) \left\lVert \varphi \right\rVert_{1} \right\}
	\end{align*}
	holds for some $C_{m+1} >0$ independent of $t$ and $\varphi$.
\end{rem}

The following lemma will be used in the proof of Theorem \ref{th:P_weight} to calculate weighted estimates with approximation.

\begin{lem} \label{lem:heat_weight_appro}
	Let $w \in W^{2, \infty} \left( \mathbb{R}^{n} \right)$ and let $\varphi \in L^{1} \left( \mathbb{R}^{n} \right)$.
	Then, the estimate
	\begin{align}
		\left\lVert we^{t \Delta} \varphi -e^{t \Delta} w \varphi \right\rVert_{1} &\leq \left( \left\lVert \Delta w \right\rVert_{\infty} t+ \left\lVert \nabla w \right\rVert_{\infty} \left\lVert \nabla G_{1} \right\rVert_{1} t^{\frac{1}{2}} \right) \left\lVert \varphi \right\rVert_{1}
	\end{align}
	holds for any $t>0$.
\end{lem}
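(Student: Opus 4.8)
The plan is to realise $F(t):=we^{t\Delta}\varphi-e^{t\Delta}(w\varphi)$ as the solution of an inhomogeneous heat equation with vanishing initial datum and to read off the two terms on the right–hand side from Duhamel's formula and the smoothing estimates of Section~\ref{sec:Heat_semigroup}. Since $\varphi\in L^{1}(\mathbb{R}^{n})$ and $w\in W^{2,\infty}(\mathbb{R}^{n})\subset L^{\infty}(\mathbb{R}^{n})$, both $we^{t\Delta}\varphi$ and $e^{t\Delta}(w\varphi)$ belong to $L^{1}(\mathbb{R}^{n})$ for every $t\ge0$, $F$ is continuous in $t$ with $F(0)=0$, and for $t>0$ the function $u(t):=e^{t\Delta}\varphi$ is a smooth solution of the heat equation. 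An equivalent — and, in the spirit of the present paper, perhaps more natural — starting point is the integral-kernel identity $F(t)(x)=\int_{\mathbb{R}^{n}}G_{t}(x-y)\bigl(w(x)-w(y)\bigr)\varphi(y)\,dy$, and I will point out the kernel variant of each step as I go.

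First I would differentiate $F$ in $t$, using the Leibniz rule $\Delta(wu)=w\,\Delta u+2\,\nabla w\cdot\nabla u+u\,\Delta w$ (legitimate because $w\in W^{2,\infty}$ and $u$ is smooth for $t>0$), which gives $\partial_{t}F-\Delta F=-\,u\,\Delta w-2\,\nabla w\cdot\nabla u$ with $F(0)=0$, hence
\begin{align*}
	F(t)=-\int_{0}^{t} e^{(t-s)\Delta}\bigl(u(s)\,\Delta w+2\,\nabla w\cdot\nabla u(s)\bigr)\,ds.
\end{align*}
Taking $L^{1}$-norms and using the contractivity of $e^{(t-s)\Delta}$ on $L^{1}$, the $\Delta w$-part is bounded by $\int_{0}^{t}\lVert\Delta w\rVert_{\infty}\lVert u(s)\rVert_{1}\,ds\le\lVert\Delta w\rVert_{\infty}\lVert\varphi\rVert_{1}\,t$ (using $\lVert u(s)\rVert_{1}\le\lVert\varphi\rVert_{1}$), while the $\nabla w$-part is controlled by $\lVert\nabla w\rVert_{\infty}\lVert\nabla e^{s\Delta}\varphi\rVert_{1}$ and the $L^{1}$–$L^{1}$ gradient estimate of Lemma~\ref{lem:Lp-Lq}, $\lVert\nabla e^{s\Delta}\varphi\rVert_{1}\le s^{-1/2}\lVert\nabla G_{1}\rVert_{1}\lVert\varphi\rVert_{1}$, whose $s^{-1/2}$-singularity is integrable on $(0,t)$ and produces (a multiple of) $t^{1/2}\lVert\nabla w\rVert_{\infty}\lVert\nabla G_{1}\rVert_{1}\lVert\varphi\rVert_{1}$. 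In the kernel picture these two contributions emerge instead from Taylor-expanding $w(x)-w(y)$ about $x$: the first-order term becomes $-2t\,\nabla w\cdot\nabla e^{t\Delta}\varphi$ via the identity $(x-y)_{k}\,G_{t}(x-y)=-2t\,(\partial_{k}G_{t})(x-y)$ (already used in the proof of Proposition~\ref{pro:heat_asymptotics_0}), and the quadratic remainder is processed through $(x-y)_{j}(x-y)_{k}\,G_{t}(x-y)=4t^{2}(\partial_{j}\partial_{k}G_{t})(x-y)+2t\,\delta_{jk}\,G_{t}(x-y)$, whose Kronecker-delta piece is precisely what extracts $\Delta w$ — rather than the full Hessian — from the second-order term.

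The step I expect to be the main obstacle is the sharp treatment of the gradient term, i.e.\ arriving at exactly the advertised coefficient in front of $t^{1/2}\lVert\nabla w\rVert_{\infty}\lVert\nabla G_{1}\rVert_{1}\lVert\varphi\rVert_{1}$ and making sure no term involving the full Hessian $D^{2}w$ survives. The freedom to be exploited here is whether to leave the derivative on $e^{s\Delta}\varphi$ or to integrate by parts and move it onto the Gauss kernel via $2\,\nabla w\cdot\nabla u=2\,\nabla\!\cdot(u\nabla w)-2u\,\Delta w$ (which folds part of the gradient contribution back into the $\Delta w$-term and trades the weight $s^{-1/2}$ for $(t-s)^{-1/2}$); organising this so that only the two stated terms remain, with clean constants, is the delicate bookkeeping. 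Everything else — contractivity of the heat semigroup on $L^{1}$, the scaling $\lVert\nabla G_{t}\rVert_{1}=t^{-1/2}\lVert\nabla G_{1}\rVert_{1}$, the bound $\lVert e^{t\Delta}\varphi\rVert_{1}\le\lVert\varphi\rVert_{1}$, and the justification of differentiation under the integral for $t>0$ with continuity of $F$ down to $t=0$ in $L^{1}$ — is routine.
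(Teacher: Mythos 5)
Your argument is essentially the paper's own proof: you represent $F(t)=we^{t\Delta}\varphi-e^{t\Delta}(w\varphi)$ via Duhamel as $-\int_{0}^{t}e^{(t-s)\Delta}\bigl(\Delta w\,e^{s\Delta}\varphi+2\nabla w\cdot\nabla e^{s\Delta}\varphi\bigr)\,ds$ (the paper obtains the identical formula by integrating $\tfrac{d}{ds}\bigl(e^{(t-s)\Delta}we^{s\Delta}\varphi\bigr)$ over $(0,t)$) and then use $L^{1}$-contractivity together with $\lVert\nabla e^{s\Delta}\varphi\rVert_{1}\leq s^{-1/2}\lVert\nabla G_{1}\rVert_{1}\lVert\varphi\rVert_{1}$, exactly as in the paper. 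The coefficient worry you flag is genuine but harmless: this route yields $4\lVert\nabla w\rVert_{\infty}\lVert\nabla G_{1}\rVert_{1}t^{1/2}$ (the factor $2$ from the Leibniz rule times $\int_{0}^{t}s^{-1/2}\,ds=2t^{1/2}$), the same loss is implicitly present in the paper's final line, and only the qualitative bound $C\bigl(t+t^{1/2}\bigr)\lVert\varphi\rVert_{1}$ is ever used in the proof of Theorem \ref{th:P_weight}, so there is no need for the integration by parts or Taylor-kernel bookkeeping you contemplate.
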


\begin{proof}
	From the identity
	\begin{align*}
		we^{t \Delta} \varphi -e^{t \Delta} w \varphi &= \int_{0}^{t} \frac{d}{ds} \left( e^{\left( t-s \right) \Delta} we^{s \Delta} \varphi \right) ds \\
		&= \int_{0}^{t} e^{\left( t-s \right) \Delta} \left( - \Delta \left( we^{s \Delta} \varphi \right) +w \Delta e^{s \Delta} \varphi \right) ds \\
		&= \int_{0}^{t} e^{\left( t-s \right) \Delta} \left( - \Delta we^{s \Delta} \varphi -2 \nabla w \cdot \nabla e^{s \Delta} \varphi \right) ds
	\end{align*}
	and Lemma \ref{lem:Lp-Lq}, we have
	\begin{align*}
		\left\lVert we^{t \Delta} \varphi -e^{t \Delta} w \varphi \right\rVert_{1} &\leq \int_{0}^{t} \bigl\lVert e^{\left( t-s \right) \Delta} \left( - \Delta we^{s \Delta} \varphi -2 \nabla w \cdot \nabla e^{s \Delta} \varphi \right) \bigr\rVert_{1} ds \\
		&\leq \left\lVert G_{1} \right\rVert_{1} \int_{0}^{t} \left\lVert - \Delta we^{s \Delta} \varphi -2 \nabla w \cdot \nabla e^{s \Delta} \varphi \right\rVert_{1} ds \\
		&\leq \left\lVert \Delta w \right\rVert_{\infty} \int_{0}^{t} \left\lVert e^{s \Delta} \varphi \right\rVert_{1} ds+2 \left\lVert \nabla w \right\rVert_{\infty} \int_{0}^{t} \left\lVert \nabla e^{s \Delta} \varphi \right\rVert_{1} ds \\
		&\leq \left\lVert \Delta w \right\rVert_{\infty} \left\lVert G_{1} \right\rVert_{1} \left\lVert \varphi \right\rVert_{1} \int_{0}^{t} ds+2 \left\lVert \nabla w \right\rVert_{\infty} \left\lVert \nabla G_{1} \right\rVert_{1} \left\lVert \varphi \right\rVert_{1} \int_{0}^{t} s^{- \frac{1}{2}} ds \\
		&= \left( \left\lVert \Delta w \right\rVert_{\infty} t+ \left\lVert \nabla w \right\rVert_{\infty} \left\lVert \nabla G_{1} \right\rVert_{1} t^{\frac{1}{2}} \right) \left\lVert \varphi \right\rVert_{1}.
	\end{align*}
\end{proof}

\section{Proof of Theorem \ref{th:P_weight}} \label{sec:P_weight}

	We show Theorem \ref{th:P_weight} by induction on $m \in \mathbb{Z}_{>0}$.
	First of all, we introduce approximate functions of the monomial weights.
	For $j \in \left\{ 1, \ldots, n \right\}$ and $\varepsilon \in \left( 0, 1 \right]$, we define a function $w_{j, \varepsilon} \colon \mathbb{R}^{n} \to \mathbb{R}$ by
	\begin{align*}
		w_{j, \varepsilon} \left( x \right) \coloneqq x_{j} e^{- \varepsilon \left\lvert x \right\rvert^{2}}, \qquad x= \left( x_{1}, \ldots, x_{n} \right) \in \mathbb{R}^{n}.
	\end{align*}
	Then, we can see that $w_{j, \varepsilon} \in W^{2, \infty} \left( \mathbb{R}^{n} \right)$ and
	\begin{align*}
		\nabla w_{j, \varepsilon} \left( x \right) &=e^{- \varepsilon \left\lvert x \right\rvert^{2}} \left( -2 \varepsilon x_{j} x+e_{j} \right), \\
		\Delta w_{j, \varepsilon} \left( x \right) &=e^{- \varepsilon \left\lvert x \right\rvert^{2}} \left( 4 \varepsilon^{2} x_{j} \left\lvert x \right\rvert^{2} -2 \left( n+2 \right) \varepsilon x_{j} \right),
	\end{align*}
	whence follows
	\begin{align}
		\label{eq:3.a}
		\left\lVert \nabla w_{j, \varepsilon} \right\rVert_{\infty} &\leq 2 \sup_{\rho \geq 0} \rho e^{- \rho} +1 \leq 2, \\
		\label{eq:3.b}
		\left\lVert \Delta w_{j, \varepsilon} \right\rVert_{\infty} &\leq 4 \varepsilon^{\frac{1}{2}} \sup_{\rho \geq 0} \rho^{\frac{3}{2}} e^{- \rho} +2 \left( n+2 \right) \varepsilon^{\frac{1}{2}} \sup_{\rho \geq 0} \rho^{\frac{1}{2}} e^{- \rho} \leq \left( n+4 \right) \varepsilon^{\frac{1}{2}}.
	\end{align}

	Now, we show the case where $m=1$.
	Let $\varphi \in \left( L_{1}^{1} \cap L^{\infty} \right) \left( \mathbb{R}^{n} \right)$, $t>0$ and $\alpha \in \mathbb{Z}_{\geq 0}^{n}$ with $\left\lvert \alpha \right\rvert =1$.
	Then, there exists $j \in \left\{ 1, \ldots, n \right\}$ such that $\alpha =e_{j}$.
	Here and hereafter, let $C$ denote a positive constant independent of $t$ and $\varepsilon$ which may change from line to line.
	Multiplying \eqref{I} by $w_{j, \varepsilon}$ yields
	\begin{align*}
		w_{j, \varepsilon} u \left( t \right) &=w_{j, \varepsilon} e^{t \Delta} \varphi + \int_{0}^{t} w_{j, \varepsilon} e^{\left( t-s \right) \Delta} f \left( u \left( s \right) \right) ds \\
		&=e^{t \Delta} w_{j, \varepsilon} \varphi + \left( w_{j, \varepsilon} e^{t \Delta} \varphi -e^{t \Delta} w_{j, \varepsilon} \varphi \right) \\
		&\hspace{1cm} + \int_{0}^{t} e^{\left( t-s \right) \Delta} w_{j, \varepsilon} f \left( u \left( s \right) \right) ds+ \int_{0}^{t} \left( w_{j, \varepsilon} e^{\left( t-s \right) \Delta} f \left( u \left( s \right) \right) -e^{\left( t-s \right) \Delta} w_{j, \varepsilon} f \left( u \left( s \right) \right) \right) ds.
	\end{align*}
	By \eqref{eq:esti_f}, \eqref{eq:3.a}, \eqref{eq:3.b}, Proposition \ref{pro:P_global}, and Lemma \ref{lem:heat_weight_appro}, we have
	\begin{align*}
		\left\lVert w_{j, \varepsilon} u \left( t \right) \right\rVert_{1} &\leq \left\lVert e^{t \Delta} w_{j, \varepsilon} \varphi \right\rVert_{1} + \left\lVert w_{j, \varepsilon} e^{t \Delta} \varphi -e^{t \Delta} w_{j, \varepsilon} \varphi \right\rVert_{1} \\
		&\hspace{1cm} + \int_{0}^{t} \bigl\lVert e^{\left( t-s \right) \Delta} w_{j, \varepsilon} f \left( u \left( s \right) \right) \bigr\rVert_{1} ds \\
		&\hspace{1cm} + \int_{0}^{t} \bigl\lVert w_{j, \varepsilon} e^{\left( t-s \right) \Delta} f \left( u \left( s \right) \right) -e^{\left( t-s \right) \Delta} w_{j, \varepsilon} f \left( u \left( s \right) \right) \bigr\rVert_{1} ds \\
		&\leq \left\lVert w_{j, \varepsilon} \varphi \right\rVert_{1} + \left( \left\lVert \Delta w_{j, \varepsilon} \right\rVert_{\infty} t+ \left\lVert \nabla w_{j, \varepsilon} \right\rVert_{\infty} \left\lVert \nabla G_{1} \right\rVert_{1} t^{\frac{1}{2}} \right) \left\lVert \varphi \right\rVert_{1} \\
		&\hspace{1cm} +C \int_{0}^{t} \left\lVert u \left( s \right) \right\rVert_{\infty}^{p-1} \left\lVert w_{j, \varepsilon} u \left( s \right) \right\rVert_{1} ds \\
		&\hspace{1cm} +C \int_{0}^{t} \left( \left\lVert \Delta w_{j, \varepsilon} \right\rVert_{\infty} \left( t-s \right) + \left\lVert \nabla w_{j, \varepsilon} \right\rVert_{\infty} \left\lVert \nabla G_{1} \right\rVert_{1} \left( t-s \right)^{\frac{1}{2}} \right) \left\lVert u \left( s \right) \right\rVert_{p}^{p} ds \\
		&\leq \left\lVert x_{j} \varphi \right\rVert_{1} +C \left( \varepsilon^{\frac{1}{2}} t+t^{\frac{1}{2}} \right) \left\lVert \varphi \right\rVert_{1} \\
		&\hspace{1cm} +C \int_{0}^{t} \left( 1+s \right)^{- \frac{n}{2} \left( p-1 \right)} \left\lVert w_{j, \varepsilon} u \left( s \right) \right\rVert_{1} ds \\
		&\hspace{1cm} +C \int_{0}^{t} \left( \varepsilon^{\frac{1}{2}} \left( t-s \right) + \left( t-s \right)^{\frac{1}{2}} \right) \left( 1+s \right)^{- \frac{n}{2} \left( p-1 \right)} ds \\
		&= \xi_{\varepsilon} \left( t \right) + \int_{0}^{t} \eta \left( s \right) \left\lVert w_{j, \varepsilon} u \left( s \right) \right\rVert_{1} ds,
	\end{align*}
	where
	\begin{align*}
		\xi_{\varepsilon} \left( t \right) &\coloneqq \left\lVert x_{j} \varphi \right\rVert_{1} +C \left( \varepsilon^{\frac{1}{2}} t+t^{\frac{1}{2}} \right) \left\lVert \varphi \right\rVert_{1}+C \int_{0}^{t} \left( \varepsilon^{\frac{1}{2}} \left( t-s \right) + \left( t-s \right)^{\frac{1}{2}} \right) \left( 1+s \right)^{- \frac{n}{2} \left( p-1 \right)} ds, \\
		\eta \left( t \right) &\coloneqq C \left( 1+t \right)^{- \frac{n}{2} \left( p-1 \right)}.
	\end{align*}
	Therefore, from the Gr\"{o}nwall lemma and $\xi_{\varepsilon} \leq \xi_{1}$, we derive
	\begin{align}
		\label{eq:3.c}
		\left\lVert w_{j, \varepsilon} u \left( t \right) \right\rVert_{1} &\leq \xi_{\varepsilon} \left( t \right) + \int_{0}^{t} \xi_{\varepsilon} \left( s \right) \eta \left( s \right) \exp \left( \int_{s}^{t} \eta \left( \tau \right) d \tau \right) ds \nonumber \\
		&\leq \xi_{1} \left( t \right) + \int_{0}^{t} \xi_{1} \left( s \right) \eta \left( s \right) \exp \left( \int_{s}^{t} \eta \left( \tau \right) d \tau \right) ds.
	\end{align}
	In particular, since the right hand side on the last inequality in \eqref{eq:3.c} is finite and independent of $\varepsilon$, it follows from Fatou's lemma that $x_{j} u \left( t \right) \in L^{1} \left( \mathbb{R}^{n} \right)$, which in turn implies $x_{j} f \left( u \left( t \right) \right) \in L^{1} \left( \mathbb{R}^{n} \right)$.
	Moreover, by \eqref{I} and Theorem \ref{th:heat_commutator}, we obtain
	\begin{align*}
		x_{j} u \left( t \right) &=x_{j} e^{t \Delta} \varphi + \int_{0}^{t} x_{j} e^{\left( t-s \right) \Delta} f \left( u \left( s \right) \right) ds \\
		&=e^{t \Delta} x_{j} \varphi -2t \partial_{j} e^{t \Delta} \varphi + \int_{0}^{t} e^{\left( t-s \right) \Delta} x_{j} f \left( u \left( s \right) \right) ds-2 \int_{0}^{t} \left( t-s \right) \partial_{j} e^{\left( t-s \right) \Delta} f \left( u \left( s \right) \right) ds,
	\end{align*}
	whence follows $x_{j} u \in C \left( \left[ 0, + \infty \right); L^{1} \left( \mathbb{R}^{n} \right) \right)$.
	On the other hand, taking $\varepsilon \searrow 0$ in \eqref{eq:3.c} yields
	\begin{align*}
		\left\lVert x_{j} u \left( t \right) \right\rVert_{1} &\leq \xi_{0} \left( t \right) + \int_{0}^{t} \xi_{0} \left( s \right) \eta \left( s \right) \exp \left( \int_{s}^{t} \eta \left( \tau \right) d \tau \right) ds,
	\end{align*}
	where
	\begin{align*}
		\xi_{0} \left( t \right) &\coloneqq \left\lVert x_{j} \varphi \right\rVert_{1} +Ct^{\frac{1}{2}} \left\lVert \varphi \right\rVert_{1}+C \int_{0}^{t} \left( t-s \right)^{\frac{1}{2}} \left( 1+s \right)^{- \frac{n}{2} \left( p-1 \right)} ds.
	\end{align*}
	Since $p>p_{\mathrm{F}} \left( n \right)$, the integral appearing in the definition of $\xi_{0} \left( t \right)$ is estimated as
	\begin{align*}
		\int_{0}^{t} \left( t-s \right)^{\frac{1}{2}} \left( 1+s \right)^{- \frac{n}{2} \left( p-1 \right)} ds &\leq t^{\frac{1}{2}} \int_{0}^{t} \left( 1+s \right)^{- \frac{n}{2} \left( p-1 \right)} ds \leq Ct^{\frac{1}{2}}.
	\end{align*}
	Therefore, we have
	\begin{align*}
		&\int_{0}^{t} \xi_{0} \left( s \right) \eta \left( s \right) \exp \left( \int_{s}^{t} \eta \left( \tau \right) d \tau \right) ds \\
		&\hspace{1cm} \leq C \exp \left( \int_{0}^{+ \infty} \eta \left( \tau \right) d \tau \right) \int_{0}^{t} \left( 1+s^{\frac{1}{2}} \right) \eta \left( s \right) ds \\
		&\hspace{1cm} \leq C \exp \left( C \int_{0}^{+ \infty} \left( 1+ \tau \right)^{- \frac{n}{2} \left( p-1 \right)} d \tau \right) \left( 1+t^{\frac{1}{2}} \right) \int_{0}^{t} \left( 1+s \right)^{- \frac{n}{2} \left( p-1 \right)} ds \\
		&\hspace{1cm} \leq C \left( 1+t^{\frac{1}{2}} \right),
	\end{align*}
	whence follows
	\begin{align*}
		\left\lVert x_{j} u \left( t \right) \right\rVert_{1} \leq C \left( 1+t^{\frac{1}{2}} \right).
	\end{align*}

	Next, we assume that Theorem \ref{th:P_weight} holds for some $m \in \mathbb{Z}_{>0}$.
	Let $\varphi \in \left( L_{m+1}^{1} \cap L^{\infty} \right) \left( \mathbb{R}^{n} \right)$, $t>0$ and $\alpha' \in \mathbb{Z}_{\geq 0}^{n}$ with $\left\lvert \alpha' \right\rvert =m+1$.
	Then, there exist $\alpha \in \mathbb{Z}_{\geq 0}^{n}$ with $\left\lvert \alpha \right\rvert =m$ and $j \in \left\{ 1, \ldots, n \right\}$ such that $\alpha' = \alpha +e_{j}$.
	From the induction hypothesis and Remark \ref{rem:heat_remainder_esti}, it follows that
	\begin{align}
		\label{eq:3.e}
		\left\lVert x_{j} R_{\alpha} \left( t \right) \varphi \right\rVert_{1} &\leq C \left\{ t^{\frac{1}{2}} \left\lVert \left\lvert x \right\rvert^{m} \varphi \right\rVert_{1} + \left( t^{\frac{1}{2}} +t^{\frac{m+1}{2}} \right) \left\lVert \varphi \right\rVert_{1} \right\}, \\
		\label{eq:3.f}
		\left\lVert x_{j} R_{\alpha} \left( t-s \right) f \left( u \left( s \right) \right) \right\rVert_{1} &\leq C \left\{ \left( t-s \right)^{\frac{1}{2}} \left\lVert \left\lvert x \right\rvert^{m} f \left( u \left( s \right) \right) \right\rVert_{1} + \left( \left( t-s \right)^{\frac{1}{2}} + \left( t-s \right)^{\frac{m+1}{2}} \right) \left\lVert f \left( u \left( s \right) \right) \right\rVert_{1} \right\}
	\end{align}
	for any $s \in \left( 0, t \right)$, where $C$ does not depend on $s$.
	Multiplying \eqref{I} by $w_{j, \varepsilon} x^{\alpha}$ and applying Theorem \ref{th:heat_commutator}, we have
	\begin{align*}
		w_{j, \varepsilon} x^{\alpha} u \left( t \right) &=w_{j, \varepsilon} x^{\alpha} e^{t \Delta} \varphi + \int_{0}^{t} w_{j, \varepsilon} x^{\alpha} e^{\left( t-s \right) \Delta} f \left( u \left( s \right) \right) ds \\
		&=e^{t \Delta} w_{j, \varepsilon} x^{\alpha} \varphi + \left( w_{j, \varepsilon} e^{t \Delta} x^{\alpha} \varphi -e^{t \Delta} w_{j, \varepsilon} x^{\alpha} \varphi \right) + w_{j, \varepsilon} R_{\alpha} \left( t \right) \varphi \\
		&\hspace{1cm} + \int_{0}^{t} \left( w_{j, \varepsilon} e^{\left( t-s \right) \Delta} x^{\alpha} f \left( u \left( s \right) \right) -e^{\left( t-s \right) \Delta} w_{j, \varepsilon} x^{\alpha} f \left( u \left( s \right) \right) \right) ds \\
		&\hspace{1cm} + \int_{0}^{t} e^{\left( t-s \right) \Delta} w_{j, \varepsilon} x^{\alpha} f \left( u \left( s \right) \right) ds+ \int_{0}^{t} w_{j, \varepsilon} R_{\alpha} \left( t-s \right) f \left( u \left( s \right) \right) ds.
	\end{align*}
	By a computation similar to that in the case where $m=1$ with \eqref{eq:esti_f}, \eqref{eq:3.a}, \eqref{eq:3.b}, \eqref{eq:3.e}, \eqref{eq:3.f}, Proposition \ref{pro:P_global}, and Lemma \ref{lem:heat_weight_appro}, we can derive
	\begin{align*}
		\left\lVert w_{j, \varepsilon} x^{\alpha} u \left( t \right) \right\rVert_{1} \leq \widetilde{\xi}_{\varepsilon} \left( t \right) + \int_{0}^{t} \eta \left( s \right) \left\lVert w_{j, \varepsilon} x^{\alpha} u \left( s \right) \right\rVert_{1} ds,
	\end{align*}
	where
	\begin{align*}
		\widetilde{\xi}_{\varepsilon} \left( t \right) &\coloneqq C \left( 1+t^{\frac{1}{2}} + \varepsilon^{\frac{1}{2}} t+t^{\frac{m+1}{2}} \right) +C \int_{0}^{t} \left( \varepsilon^{\frac{1}{2}} \left( t-s \right) + \left( t-s \right)^{\frac{1}{2}} \right) \left( 1+s \right)^{- \frac{n}{2} \left( p-1 \right)} \left( 1+s^{\frac{m}{2}} \right) ds \\
		&\hspace{1cm} +C \int_{0}^{t} \left( \left( t-s \right)^{\frac{1}{2}} + \left( t-s \right)^{\frac{m+1}{2}} \right) \left( 1+s \right)^{- \frac{n}{2} \left( p-1 \right)} ds.
	\end{align*}
	Therefore, from the Gr\"{o}nwall lemma and $\widetilde{\xi}_{\varepsilon} \leq \widetilde{\xi}_{1}$, we have
	\begin{align}
		\label{eq:3.g}
		\left\lVert w_{j, \varepsilon} x^{\alpha} u \left( t \right) \right\rVert_{1} &\leq \widetilde{\xi}_{\varepsilon} \left( t \right) + \int_{0}^{t} \widetilde{\xi}_{\varepsilon} \left( s \right) \eta \left( s \right) \exp \left( \int_{s}^{t} \eta \left( \tau \right) d \tau \right) ds \nonumber \\
		&\leq \widetilde{\xi}_{1} \left( t \right) + \int_{0}^{t} \widetilde{\xi}_{1} \left( s \right) \eta \left( s \right) \exp \left( \int_{s}^{t} \eta \left( \tau \right) d \tau \right) ds.
	\end{align}
	In particular, since the right hand side on the last inequality in \eqref{eq:3.g} is finite and independent of $\varepsilon$, it follows from Fatou's lemma that $x^{\alpha'} u \left( t \right) =x_{j} x^{\alpha} u \left( t \right) \in L^{1} \left( \mathbb{R}^{n} \right)$, which in turn implies $x^{\alpha'} f \left( u \left( t \right) \right) \in L^{1} \left( \mathbb{R}^{n} \right)$.
	Furthermore, by \eqref{I} and Theorem \ref{th:heat_commutator}, we obtain
	\begin{align*}
		x^{\alpha'} u \left( t \right) &=x^{\alpha'} e^{t \Delta} \varphi + \int_{0}^{t} x^{\alpha'} e^{\left( t-s \right) \Delta} f \left( u \left( s \right) \right) ds \\
		&=e^{t \Delta} x^{\alpha'} \varphi +R_{\alpha'} \left( t \right) \varphi + \int_{0}^{t} e^{\left( t-s \right) \Delta} x^{\alpha'} f \left( u \left( s \right) \right) ds+ \int_{0}^{t} R_{\alpha'} \left( t-s \right) f \left( u \left( s \right) \right) ds,
	\end{align*}
	whence follows $x^{\alpha'} u \in C \left( \left[ 0, + \infty \right); L^{1} \left( \mathbb{R}^{n} \right) \right)$.
	On the other hand, taking $\varepsilon \searrow 0$ in \eqref{eq:3.g} yields
	\begin{align*}
		\bigl\lVert x^{\alpha'} u \left( t \right) \bigr\rVert_{1} &\leq \widetilde{\xi}_{0} \left( t \right) + \int_{0}^{t} \widetilde{\xi}_{0} \left( s \right) \eta \left( s \right) \exp \left( \int_{s}^{t} \eta \left( \tau \right) d \tau \right) ds,
	\end{align*}
	where
	\begin{align*}
		\widetilde{\xi}_{0} \left( t \right) &\coloneqq C \left( 1+t^{\frac{1}{2}} +t^{\frac{m+1}{2}} \right) +C \int_{0}^{t} \left( t-s \right)^{\frac{1}{2}} \left( 1+s \right)^{- \frac{n}{2} \left( p-1 \right)} \left( 1+s^{\frac{m}{2}} \right) ds \\
		&\hspace{1cm} +C \int_{0}^{t} \left( \left( t-s \right)^{\frac{1}{2}} + \left( t-s \right)^{\frac{m+1}{2}} \right) \left( 1+s \right)^{- \frac{n}{2} \left( p-1 \right)} ds.
	\end{align*}
	Since $p>p_{\mathrm{F}} \left( n \right)$, the integrals appearing in the definition of $\widetilde{\xi}_{0} \left( t \right)$ are estimated as
	\begin{align*}
		\int_{0}^{t} \left( t-s \right)^{\frac{1}{2}} \left( 1+s \right)^{- \frac{n}{2} \left( p-1 \right)} \left( 1+s^{\frac{m}{2}} \right) ds &\leq t^{\frac{1}{2}} \left( 1+t^{\frac{m}{2}} \right) \int_{0}^{t} \left( 1+s \right)^{- \frac{n}{2} \left( p-1 \right)} ds \\
		&\leq C \left( t^{\frac{1}{2}} +t^{\frac{m+1}{2}} \right), \\
		\int_{0}^{t} \left( \left( t-s \right)^{\frac{1}{2}} + \left( t-s \right)^{\frac{m+1}{2}} \right) \left( 1+s \right)^{- \frac{n}{2} \left( p-1 \right)} ds &\leq \left( t^{\frac{1}{2}} +t^{\frac{m+1}{2}} \right) \int_{0}^{t} \left( 1+s \right)^{- \frac{n}{2} \left( p-1 \right)} ds \\
		&\leq C \left( t^{\frac{1}{2}} +t^{\frac{m+1}{2}} \right).
	\end{align*}
	Therefore, we have
	\begin{align*}
		&\int_{0}^{t} \widetilde{\xi}_{0} \left( s \right) \eta \left( s \right) \exp \left( \int_{s}^{t} \eta \left( \tau \right) d \tau \right) ds \\
		&\hspace{1cm} \leq C \exp \left( \int_{0}^{+ \infty} \eta \left( \tau \right) d \tau \right) \int_{0}^{t} \left( 1+s^{\frac{1}{2}} +s^{\frac{m+1}{2}} \right) \eta \left( s \right) ds \\
		&\hspace{1cm} \leq C \exp \left( C \int_{0}^{+ \infty} \left( 1+ \tau \right)^{- \frac{n}{2} \left( p-1 \right)} d \tau \right) \left( 1+t^{\frac{1}{2}} +t^{\frac{m+1}{2}} \right) \int_{0}^{t} \left( 1+s \right)^{- \frac{n}{2} \left( p-1 \right)} ds \\
		&\hspace{1cm} \leq C \left( 1+t^{\frac{1}{2}} +t^{\frac{m+1}{2}} \right),
	\end{align*}
	whence follows
	\begin{align*}
		\bigl\lVert x^{\alpha'} u \left( t \right) \bigr\rVert_{1} &\leq C \left( 1+t^{\frac{1}{2}} +t^{\frac{m+1}{2}} \right) \leq C \left( 1+t^{\frac{m+1}{2}} \right).
	\end{align*}
	This completes the induction argument.
	\qed

\section{Proofs of Theorems \ref{th:P_appro} and \ref{th:P_aymptotics}} \label{sec:4}
We first prove Theorem \ref{th:P_appro} in the case where $N=1$.

\begin{prop} \label{pro:4.1}
	Under the same assumptions as in Theorem \ref{th:P_appro}, for any $q \in \left[ 1, + \infty \right]$, there exists $C_{q} >0$ such that the estimates
	\begin{align*}
		t^{\frac{n}{2} \left( 1- \frac{1}{q} \right)} \left\lVert u \left( t \right) -e^{t \Delta} \varphi_{1} \right\rVert_{q} \leq \begin{dcases}
			C_{q} t^{- \sigma} &\quad \text{if} \quad 0< \sigma <1, \\
			C_{q} t^{-1} \log \left( 1+t \right) &\quad \text{if} \quad \sigma =1, \\
			C_{q} t^{-1} &\quad \text{if} \quad \sigma >1
		\end{dcases}
	\end{align*}
	hold for all $t>1$, where
	\begin{align*}
		\sigma &\coloneqq \frac{n}{2} \left( p-1 \right) -1>0, \\
		\varphi_{1} &\coloneqq \varphi + \int_{0}^{+ \infty} f \left( u \left( s \right) \right) ds.
	\end{align*}
\end{prop}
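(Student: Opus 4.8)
The plan is to work entirely within the integral equation \eqref{I}. First I would record that $\varphi_1 \in (L^1 \cap L^\infty)(\mathbb{R}^n)$, so that $u(t) - e^{t\Delta}\varphi_1$ belongs to every $L^q(\mathbb{R}^n)$. From \eqref{eq:esti_f} with $\eta = 0$ we get $|f(\xi)| \le K|\xi|^p$, hence $\|f(u(s))\|_1 \le K\|u(s)\|_p^p$ and $\|f(u(s))\|_\infty \le K\|u(s)\|_\infty^p$; Proposition \ref{pro:P_global} then bounds these by $C(1+s)^{-\frac{n}{2}(p-1)} = C(1+s)^{-(\sigma+1)}$ and $C(1+s)^{-np/2}$ respectively, both integrable on $(0,+\infty)$ since $\sigma > 0$ and $np/2 > 1$. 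Thus $\int_0^{+\infty} f(u(s))\,ds$ converges absolutely in $L^1 \cap L^\infty$. Since $e^{t\Delta}$ is a bounded linear operator on $L^1(\mathbb{R}^n)$, it commutes with this Bochner integral, so $e^{t\Delta}\varphi_1 = e^{t\Delta}\varphi + \int_0^{+\infty} e^{t\Delta} f(u(s))\,ds$. Subtracting this from \eqref{I} gives
\[
	u(t) - e^{t\Delta}\varphi_1 = \int_0^t \bigl( e^{(t-s)\Delta} - e^{t\Delta} \bigr) f(u(s))\,ds - \int_t^{+\infty} e^{t\Delta} f(u(s))\,ds .
\]

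Next I would split the first integral at $s = t/2$ and write the right-hand side as $I_1 + I_2 - I_3 - I_4$, where
\begin{align*}
	I_1 &= \int_0^{t/2} \bigl( e^{(t-s)\Delta} - e^{t\Delta} \bigr) f(u(s))\,ds, & I_2 &= \int_{t/2}^{t} e^{(t-s)\Delta} f(u(s))\,ds, \\
	I_3 &= \int_{t/2}^{t} e^{t\Delta} f(u(s))\,ds, & I_4 &= \int_t^{+\infty} e^{t\Delta} f(u(s))\,ds .
\end{align*}
The term $I_1$ is the one that generates the three regimes. For $s \in (0, t/2)$ one has $t - s \ge t/2$, so from $e^{t\Delta} - e^{(t-s)\Delta} = \int_{t-s}^t \Delta e^{\tau\Delta}\,d\tau$, Lemma \ref{lem:Lp-Lq} applied to a second-order derivative (which gives $\|\Delta e^{\tau\Delta}\psi\|_q \le C\tau^{-\frac{n}{2}(1-1/q)-1}\|\psi\|_1$ for $\psi \in L^1$), and the bound $\tau \ge t/2$ on the whole interval, I obtain $\bigl\| ( e^{(t-s)\Delta} - e^{t\Delta} ) f(u(s)) \bigr\|_q \le C s\, t^{-\frac{n}{2}(1-1/q)-1} \|f(u(s))\|_1$. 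Integrating in $s$ and inserting $\|f(u(s))\|_1 \le C(1+s)^{-(\sigma+1)}$ yields
\[
	\|I_1\|_q \le C\, t^{-\frac{n}{2}(1-1/q)-1} \int_0^{t/2} \frac{s}{(1+s)^{\sigma+1}}\,ds ,
\]
and the elementary integral is $O(1)$, $O(\log(1+t))$, or $O(t^{1-\sigma})$ according as $\sigma > 1$, $\sigma = 1$, or $0 < \sigma < 1$. Multiplying by $t^{\frac{n}{2}(1-1/q)}$ reproduces precisely the three right-hand sides in the statement.

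For the remaining pieces I would show each is $O\bigl( t^{-\frac{n}{2}(1-1/q) - \sigma} \bigr)$, which, since $t > 1$, is dominated by the $I_1$ bound in every regime. For $I_2$ I would combine the $L^q$-contractivity of $e^{(t-s)\Delta}$ with $\|f(u(s))\|_q \le K\|u(s)\|_\infty^{p-1}\|u(s)\|_q \le C(1+s)^{-(\sigma+1) - \frac{n}{2}(1-1/q)}$ (again from \eqref{eq:esti_f} and Proposition \ref{pro:P_global}) and integrate over $(t/2, t)$. For $I_3$ and $I_4$ I would use Lemma \ref{lem:Lp-Lq} with no derivative, $\|e^{t\Delta} f(u(s))\|_q \le C t^{-\frac{n}{2}(1-1/q)}\|f(u(s))\|_1 \le C t^{-\frac{n}{2}(1-1/q)}(1+s)^{-(\sigma+1)}$, and integrate over $(t/2, t)$ and $(t, +\infty)$ respectively. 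Summing the four contributions and multiplying by $t^{\frac{n}{2}(1-1/q)}$ gives the assertion.

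The main obstacle is the treatment of $I_1$: one must split at $t/2$ precisely so that the difference $e^{(t-s)\Delta} - e^{t\Delta}$ is only ever evaluated where the heat semigroup is smoothing away from its singularity at $\tau = 0$, and then carefully track how the single elementary integral $\int_0^{t/2} s(1+s)^{-(\sigma+1)}\,ds$ separates the three values of $\sigma$. Everything else is routine bookkeeping of exponents with Lemma \ref{lem:Lp-Lq} and the decay estimate \eqref{eq:P_decay} of Proposition \ref{pro:P_global}.
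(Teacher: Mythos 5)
Your proposal is correct and follows essentially the same route as the paper: the same decomposition via the integral equation with the split at $s=t/2$, the same representation of $e^{(t-s)\Delta}-e^{t\Delta}$ as an integral of $\Delta e^{\tau\Delta}$ over an interval of length $s$ with $\tau\geq t/2$ (the paper's $\theta$-parametrization is equivalent to your $\tau$-integral), the same $L^{1}\to L^{q}$ smoothing estimates combined with $\left\lVert f\left(u\left(s\right)\right)\right\rVert_{1}\leq C\left(1+s\right)^{-\left(\sigma+1\right)}$, and the same elementary integral producing the three regimes. Keeping $I_{3}$ and $I_{4}$ separate rather than writing $e^{t\Delta}\int_{t/2}^{+\infty}f\left(u\left(s\right)\right)ds$ is only a cosmetic difference.
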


\begin{proof}
	Let $q \in \left[ 1, + \infty \right]$ and let $t>1$.
	We divide the difference $u \left( t \right) -e^{t \Delta} \varphi_{1}$ into three parts by using \eqref{I}:
	\begin{align*}
		u \left( t \right) -e^{t \Delta} \varphi_{1}
		&= \int_{0}^{t/2} \left( e^{\left( t-s \right) \Delta} -e^{t \Delta} \right) f \left( u \left( s \right) \right) ds+ \int_{t/2}^{t} e^{\left( t-s \right) \Delta} f \left( u \left( s \right) \right) ds-e^{t \Delta} \int_{t/2}^{+ \infty} f \left( u \left( s \right) \right) ds \\
		&= \int_{0}^{t/2} \int_{0}^{1} \frac{d}{d \theta} \left( e^{\left( t-s \theta \right) \Delta} f \left( u \left( s \right) \right) \right) d \theta ds+ \int_{t/2}^{t} e^{\left( t-s \right) \Delta} f \left( u \left( s \right) \right) ds-e^{t \Delta} \int_{t/2}^{+ \infty} f \left( u \left( s \right) \right) ds \\
		&=- \int_{0}^{t/2} \int_{0}^{1} s \Delta e^{\left( t-s \theta \right) \Delta} f \left( u \left( s \right) \right) d \theta ds+ \int_{t/2}^{t} e^{\left( t-s \right) \Delta} f \left( u \left( s \right) \right) ds-e^{t \Delta} \int_{t/2}^{+ \infty} f \left( u \left( s \right) \right) ds.
	\end{align*}
	We note that since $p>p_{\mathrm{F}} \left( n \right) \Leftrightarrow \sigma >0$, we have
	\begin{align*}
		\frac{np}{2} - \frac{n}{2q} -1= \sigma + \frac{n}{2} \left( 1- \frac{1}{q} \right) >0.
	\end{align*}
	Therefore, from \eqref{eq:esti_f}, Proposition \ref{pro:P_global}, and Lemma \ref{lem:Lp-Lq}, we obtain
	\begin{align*}
		\left\lVert \int_{t/2}^{t} e^{\left( t-s \right) \Delta} f \left( u \left( s \right) \right) ds \right\rVert_{q}
		&\leq C \int_{t/2}^{t} \left\lVert u \left( s \right) \right\rVert_{pq}^{p} ds \\
		&\leq C \int_{t/2}^{t} s^{- \frac{n}{2} \left( 1- \frac{1}{pq} \right) p} ds \\
		&\leq Ct^{- \sigma - \frac{n}{2} \left( 1- \frac{1}{q} \right)}, \\
		\left\lVert e^{t \Delta} \int_{t/2}^{+ \infty} f \left( u \left( s \right) \right) ds \right\rVert_{q}
		&\leq C \int_{t/2}^{+ \infty} \left\lVert u \left( s \right) \right\rVert_{pq}^{p} ds \\
		&\leq C \int_{t/2}^{+ \infty} s^{- \frac{n}{2} \left( 1- \frac{1}{pq} \right) p} ds \\
		&\leq Ct^{- \sigma - \frac{n}{2} \left( 1- \frac{1}{q} \right)}.
	\end{align*}
	In the same way, we have
	\begin{align*}
		\left\lVert \int_{0}^{t/2} \int_{0}^{1} s \Delta e^{\left( t-s \theta \right) \Delta} f \left( u \left( s \right) \right) d \theta ds \right\rVert_{q}
		&\leq C \int_{0}^{t/2} \int_{0}^{1} s \left( t-s \theta \right)^{- \frac{n}{2} \left( 1- \frac{1}{q} \right) -1} \left\lVert u \left( s \right) \right\rVert_{p}^{p} d \theta ds \\
		&\leq C \int_{0}^{t/2} \int_{0}^{1} s \left( t-s \theta \right)^{- \frac{n}{2} \left( 1- \frac{1}{q} \right) -1} \left( 1+s \right)^{- \frac{n}{2} \left( p-1 \right)} d \theta ds \\
		&\leq Ct^{- \frac{n}{2} \left( 1- \frac{1}{q} \right) -1} \int_{0}^{t/2} \left( 1+s \right)^{- \sigma} ds.
	\end{align*}
	The integral appearing on the right hand side of the last inequality is estimated as
	\begin{align*}
		\int_{0}^{t/2} \left( 1+s \right)^{- \sigma} ds &\leq \begin{dcases}
			\frac{1}{1- \sigma} \left( 1+ \frac{t}{2} \right)^{1- \sigma} \leq \frac{1}{1- \sigma} \left( \frac{3t}{2} \right)^{1- \sigma}, &\quad 0< \sigma <1, \\
			\log \left( 1+ \frac{t}{2} \right) \leq \log \left( 1+t \right), &\quad \sigma =1, \\
			\frac{1}{\sigma -1}, &\quad \sigma >1.
		\end{dcases}
	\end{align*}
	As a consequence, we can deduce that
	\begin{align*}
		t^{\frac{n}{2} \left( 1- \frac{1}{q} \right)} \left\lVert u \left( t \right) -e^{t \Delta} \varphi_{1} \right\rVert_{q} \leq \begin{dcases}
			Ct^{- \sigma}, &\quad 0< \sigma <1, \\
			C \left( t^{-1} +t^{-1} \log \left( 1+t \right) \right) \leq Ct^{-1} \log \left( 1+t \right), &\quad \sigma =1, \\
			C \left( t^{- \sigma} +t^{-1} \right) \leq Ct^{-1}, &\quad \sigma >1.
		\end{dcases}
	\end{align*}
\end{proof}

Before we prove Theorem \ref{th:P_appro} in the case where $N \geq 2$, we prepare the following lemma:

\begin{lem} \label{lem:P_appro_decay}
	Under the same assumptions as in Theorem \ref{th:P_appro}, for any $N \in \mathbb{Z}_{>0}$ and $q \in \left[ 1, + \infty \right]$, there exists $A_{N, q} >0$ such that the estimate
	\begin{align*}
		\left\lVert u_{N} \left( t \right) \right\rVert_{q} \leq A_{N, q} \left( 1+t \right)^{- \frac{n}{2} \left( 1- \frac{1}{q} \right)}
	\end{align*}
	holds for all $t>0$.
\end{lem}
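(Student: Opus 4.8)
The plan is to proceed by induction on $N \in \mathbb{Z}_{>0}$, exploiting that each $u_N$ solves an integral equation of the same type as \eqref{I}, with the prescribed function $f(u_{N-1})$ in place of $f(u)$ and data $\varphi_N$ in place of $\varphi$. Consequently the sought bound will follow once we know $\varphi_N \in (L^1 \cap L^\infty)(\mathbb{R}^n)$ and that $f(u_{N-1}(\cdot))$ decays in space-time like $f(u(\cdot))$. Throughout, $\sigma = \frac n2(p-1)-1>0$ as in Theorem \ref{th:P_appro}.

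For the base case $N=1$ we have $u_1 = e^{t\Delta}\varphi_1$. Since \eqref{eq:esti_f} with $f(0)=0$ gives $|f(u(s))| \le K|u(s)|^p$, the decay \eqref{eq:P_decay} yields $\|f(u(s))\|_1 \le C(1+s)^{-\frac n2(p-1)}$ and $\|f(u(s))\|_\infty \le C(1+s)^{-\frac{np}{2}}$, both of which are integrable on $(0,+\infty)$ because $\frac n2(p-1)=1+\sigma>1$ and $\frac{np}{2}>1$. Hence $\varphi_1 = \varphi + \int_0^{+\infty} f(u(s))\,ds \in (L^1\cap L^\infty)(\mathbb{R}^n)$, and Lemma \ref{lem:Lp-Lq_2} immediately gives $\|u_1(t)\|_q = \|e^{t\Delta}\varphi_1\|_q \le A_{1,q}(1+t)^{-\frac n2(1-\frac1q)}$ with $A_{1,q} := C_{q,1}(\|\varphi_1\|_1+\|\varphi_1\|_\infty)$.

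For the inductive step, assume the estimate for $u_{N-1}$ with $N\ge 2$. First, \eqref{eq:esti_f} together with \eqref{eq:P_decay} and the induction hypothesis bounds $\|f(u(s))-f(u_{N-1}(s))\|_1 \le K(\|u(s)\|_\infty^{p-1}+\|u_{N-1}(s)\|_\infty^{p-1})(\|u(s)\|_1+\|u_{N-1}(s)\|_1) \le C(1+s)^{-\frac n2(p-1)}$, and similarly $\|f(u(s))-f(u_{N-1}(s))\|_\infty \le C(1+s)^{-\frac{np}{2}}$, so (both being integrable) $\varphi_N \in (L^1\cap L^\infty)(\mathbb{R}^n)$. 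Likewise, for every $r\in[1,+\infty]$, $\|f(u_{N-1}(s))\|_r \le K\|u_{N-1}(s)\|_{pr}^p \le C(1+s)^{-\frac n2(1-\frac1{pr})p}$, which is exactly the space-time decay rate satisfied by $f(u(\cdot))$ in the proof of Proposition \ref{pro:P_global}. Writing $u_N(t) = e^{t\Delta}\varphi_N + \int_0^t e^{(t-s)\Delta}f(u_{N-1}(s))\,ds$, I would estimate the first term by Lemma \ref{lem:Lp-Lq_2} and the Duhamel term by splitting $\int_0^t = \int_0^{t/2}+\int_{t/2}^t$: on $[0,t/2]$ one uses $t-s\ge t/2$ together with the $L^1$–$L^q$ smoothing of Lemma \ref{lem:Lp-Lq} against the integrable factor $\|f(u_{N-1}(s))\|_1\le C(1+s)^{-1-\sigma}$, while on $[t/2,t]$ one uses $s\ge t/2$ and an $L^r$–$L^q$ estimate with $r$ chosen as in the proof of Proposition \ref{pro:P_global}. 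Both pieces are $O(t^{-\frac n2(1-\frac1q)})$ for $t\ge 1$, and a routine adjustment of the constant on $0<t\le 1$ gives $\|u_N(t)\|_q \le A_{N,q}(1+t)^{-\frac n2(1-\frac1q)}$, completing the induction.

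The main obstacle is the endpoint $q=+\infty$ combined with the sub-interval $[t/2,t]$ of the Duhamel integral, where the intermediate exponent $r$ must be picked so that the time singularity $(t-s)^{-\frac n2(\frac1r-\frac1q)}$ remains integrable while $\|f(u_{N-1}(s))\|_r$ still decays fast enough in $s$; this is precisely the balance already carried out in the proof of Proposition \ref{pro:P_global} (see Appendix \ref{app:A}), so it is enough to invoke that computation rather than redo it. Every other step is routine once the induction hypothesis supplies the decay of $u_{N-1}$ and hence the integrability needed to place $\varphi_N$ in $L^1 \cap L^\infty$.
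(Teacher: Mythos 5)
Your proposal is correct and follows essentially the same route as the paper's proof: induction on $N$, the identical base case (showing $\varphi_{1} \in \left( L^{1} \cap L^{\infty} \right) \left( \mathbb{R}^{n} \right)$ and applying Lemma \ref{lem:Lp-Lq_2}), and a Duhamel splitting at $t/2$ in the inductive step using Lemmas \ref{lem:Lp-Lq} and \ref{lem:Lp-Lq_2}. The only differences are organizational: the paper rewrites $u_{N+1} \left( t \right) = e^{t \Delta} \varphi_{1} -e^{t \Delta} \int_{0}^{+ \infty} f \left( u_{N} \left( s \right) \right) ds+ \int_{0}^{t} e^{\left( t-s \right) \Delta} f \left( u_{N} \left( s \right) \right) ds$ and only treats $q=1$ and $q=+ \infty$ (recovering general $q$ by H\"{o}lder interpolation), whereas you bound $\varphi_{N}$ in $L^{1} \cap L^{\infty}$ directly via \eqref{eq:esti_f} and handle all $q$ at once (taking $r=q$ on $\left[ t/2, t \right]$ resolves the endpoint concern you raise) --- both versions work.
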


\begin{proof}
	We prove the assertion by induction on $N \in \mathbb{Z}_{>0}$.
	For the case where $N=1$, it follows from \eqref{eq:esti_f}, Proposition \ref{pro:P_global}, and Lemma \ref{lem:Lp-Lq_2} that
	\begin{align*}
		\left\lVert e^{t \Delta} \varphi_{1} \right\rVert_{q} &\leq \left\lVert e^{t \Delta} \varphi \right\rVert_{q} + \left\lVert e^{t \Delta} \int_{0}^{+ \infty} f \left( u \left( s \right) \right) ds \right\rVert_{q} \\
		&\leq C \left( 1+t \right)^{- \frac{n}{2} \left( 1- \frac{1}{q} \right)} \left( \left\lVert \varphi \right\rVert_{1} + \left\lVert \varphi \right\rVert_{\infty} \right) \\
		&\hspace{2cm} +C \left( 1+t \right)^{- \frac{n}{2} \left( 1- \frac{1}{q} \right)} \int_{0}^{+ \infty} \left\lVert u \left( s \right) \right\rVert_{\infty}^{p-1} \left( \left\lVert u \left( s \right) \right\rVert_{1} + \left\lVert u \left( s \right) \right\rVert_{\infty} \right) ds \\
		&\leq C \left( 1+t \right)^{- \frac{n}{2} \left( 1- \frac{1}{q} \right)} \left( \left\lVert \varphi \right\rVert_{1} + \left\lVert \varphi \right\rVert_{\infty} \right) +C \left( 1+t \right)^{- \frac{n}{2} \left( 1- \frac{1}{q} \right)} \int_{0}^{+ \infty} \left( 1+s \right)^{- \frac{n}{2} \left( p-1 \right)} ds \\
		&\leq C \left( 1+t \right)^{- \frac{n}{2} \left( 1- \frac{1}{q} \right)}
	\end{align*}
	for any $q \in \left[ 1, + \infty \right]$ and $t>0$.

	Next, we assume that Lemma \ref{lem:P_appro_decay} holds for some $N \in \mathbb{Z}_{>0}$ and show that it is also true when we replace $N$ by $N+1$.
	To do this, it suffices to consider the cases where $q=1$ and $q= + \infty$.
	In fact, if we can prove these cases, H\"{o}lder's inequality implies
	\begin{align*}
		\left\lVert u_{N+1} \left( t \right) \right\rVert_{q} &\leq \left\lVert u_{N+1} \left( t \right) \right\rVert_{1}^{\frac{1}{q}} \left\lVert u_{N+1} \left( t \right) \right\rVert_{\infty}^{1- \frac{1}{q}} \\
		&\leq \left( A_{N+1, 1} \right)^{\frac{1}{q}} \left( A_{N+1, \infty} \right)^{1- \frac{1}{q}} \left( 1+t \right)^{- \frac{n}{2} \left( 1- \frac{1}{q} \right)}
	\end{align*}
	for any $q \in \left( 1, + \infty \right)$ and $t>0$.
	Now, let $t>0$.
	We note that
	\begin{align*}
		u_{N+1} \left( t \right) &=e^{t \Delta} \left( \varphi + \int_{0}^{+ \infty} \left( f \left( u \left( s \right) \right) -f \left( u_{N} \left( s \right) \right) \right) ds \right) + \int_{0}^{t} e^{\left( t-s \right) \Delta} f \left( u_{N} \left( s \right) \right) ds \\
		&=e^{t \Delta} \varphi_{1} -e^{t \Delta} \int_{0}^{+ \infty} f \left( u_{N} \left( s \right) \right) ds+ \int_{0}^{t} e^{\left( t-s \right) \Delta} f \left( u_{N} \left( s \right) \right) ds.
	\end{align*}
	By \eqref{eq:esti_f} and Lemmas \ref{lem:Lp-Lq} and \ref{lem:Lp-Lq_2}, we have
	\begin{align*}
		\left\lVert u_{N+1} \left( t \right) \right\rVert_{1} &\leq \left\lVert e^{t \Delta} \varphi_{1} \right\rVert_{1} + \left\lVert e^{t \Delta} \int_{0}^{+ \infty} f \left( u_{N} \left( s \right) \right) ds \right\rVert_{1} + \int_{0}^{t} \bigl\lVert e^{\left( t-s \right)} f \left( u_{N} \left( s \right) \right) \bigr\rVert_{1} ds \\
		&\leq C+C \int_{0}^{+ \infty} \left\lVert u_{N} \left( s \right) \right\rVert_{p}^{p} ds+C \int_{0}^{t} \left\lVert u_{N} \left( s \right) \right\rVert_{p}^{p} ds \\
		&\leq C+C \int_{0}^{+ \infty} \left( 1+s \right)^{- \frac{n}{2} \left( p-1 \right)} ds \leq C, \\
		\left\lVert u_{N+1} \left( t \right) \right\rVert_{\infty} &\leq \left\lVert e^{t \Delta} \varphi_{1} \right\rVert_{\infty} + \left\lVert e^{t \Delta} \int_{0}^{+ \infty} f \left( u_{N} \left( s \right) \right) ds \right\rVert_{\infty} + \left( \int_{0}^{t/2} + \int_{t/2}^{t} \right) \bigl\lVert e^{\left( t-s \right)} f \left( u_{N} \left( s \right) \right) \bigr\rVert_{\infty} ds \\
		&\leq C \left( 1+t \right)^{- \frac{n}{2}} +C \left( 1+t \right)^{- \frac{n}{2}} \int_{0}^{+ \infty} \left\lVert u_{N} \left( s \right) \right\rVert_{\infty}^{p-1} \left( \left\lVert u_{N} \left( s \right) \right\rVert_{1} + \left\lVert u_{N} \left( s \right) \right\rVert_{\infty} \right) ds \\
		&\hspace{1cm} +C \int_{0}^{t/2} \left( 1+t-s \right)^{- \frac{n}{2}} \left\lVert u_{N} \left( s \right) \right\rVert_{\infty}^{p-1} \left( \left\lVert u_{N} \left( s \right) \right\rVert_{1} + \left\lVert u_{N} \left( s \right) \right\rVert_{\infty} \right) ds \\
		&\hspace{1cm} +C \int_{t/2}^{t} \left\lVert u_{N} \left( s \right) \right\rVert_{\infty}^{p} ds \\
		&\leq C \left( 1+t \right)^{- \frac{n}{2}} +C \left( 1+t \right)^{- \frac{n}{2}} \int_{0}^{+ \infty} \left( 1+s \right)^{- \frac{n}{2} \left( p-1 \right)} ds \\
		&\hspace{1cm} +C \int_{0}^{t/2} \left( 1+t-s \right)^{- \frac{n}{2}} \left( 1+s \right)^{- \frac{n}{2} \left( p-1 \right)} ds+C \int_{t/2}^{t} \left( 1+s \right)^{- \frac{np}{2}} ds \\
		&\leq C \left( 1+t \right)^{- \frac{n}{2}} +C \left( 1+t \right)^{- \frac{n}{2}} \int_{0}^{+ \infty} \left( 1+s \right)^{- \frac{n}{2} \left( p-1 \right)} ds \\
		&\hspace{1cm} +C \left( 1+t \right)^{- \frac{n}{2}} \int_{0}^{t/2} \left( 1+s \right)^{- \frac{n}{2} \left( p-1 \right)} ds+C \left( 1+t \right)^{- \frac{n}{2}} \int_{t/2}^{t} \left( 1+s \right)^{- \frac{n}{2} \left( p-1 \right)} ds \\
		&\leq C \left( 1+t \right)^{- \frac{n}{2}}.
	\end{align*}
	This completes the proof.
\end{proof}

\begin{proof}[Proof of Theorem \ref{th:P_appro}]
	We regard Theorem \ref{th:P_appro} as the assertion with respect to $N \in \mathbb{Z}_{>0}$:
	\begin{itemize}
		\item[(B)$_{N}$]
			For any $q \in \left[ 1, + \infty \right]$, there exists $C_{N, q} >0$ such that the estimate
		\begin{align*}
			t^{\frac{n}{2} \left( 1- \frac{1}{q} \right)} \left\lVert u \left( t \right) -u_{N} \left( t \right) \right\rVert_{q} \leq C_{N, q} \zeta_{N} \left( t \right)
		\end{align*}
		holds for all $t>2^{N-1}$, where
		\begin{align*}
			\zeta_{N} \left( t \right) \coloneqq \begin{dcases}
				t^{-N \sigma}, &\quad 0<N \sigma <1, \\
				t^{-1} \log \left( 1+t \right), &\quad N \sigma =1, \\
				t^{-1}, &\quad N \sigma >1.
			\end{dcases}
		\end{align*}
	\end{itemize}
	We show that (B)$_{N}$ is true for any $N \in \mathbb{Z}_{>0}$ by induction on $N$.
	We have already proved the case where $N=1$ in Proposition \ref{pro:4.1}.
	We assume that (B)$_{N}$ holds for some $N \in \mathbb{Z}_{>0}$.
	Let $q \in \left[ 1, + \infty \right]$ and let $t>2^{N}$.
	We divide the difference $u \left( t \right) -u_{N+1} \left( t \right)$ into three parts by using \eqref{I}:
	\begin{align*}
		u \left( t \right) -u_{N+1} \left( t \right)
		&= \int_{0}^{t/2} \left( e^{\left( t-s \right) \Delta} -e^{t \Delta} \right) \left( f \left( u \left( s \right) \right) - f \left( u_{N} \left( s \right) \right) \right) ds \\
		&\hspace{1cm} + \int_{t/2}^{t} e^{\left( t-s \right) \Delta} \left( f \left( u \left( s \right) \right) - f \left( u_{N} \left( s \right) \right) \right) ds \\
		&\hspace{1cm} -e^{t \Delta} \int_{t/2}^{+ \infty} \left( f \left( u \left( s \right) \right) - f \left( u_{N} \left( s \right) \right) \right) ds \\
		&=- \int_{0}^{t/2} \int_{0}^{1} s \Delta e^{\left( t-s \theta \right) \Delta} \left( f \left( u \left( s \right) \right) - f \left( u_{N} \left( s \right) \right) \right) d \theta ds \\
		&\hspace{1cm} + \int_{t/2}^{t} e^{\left( t-s \right) \Delta} \left( f \left( u \left( s \right) \right) - f \left( u_{N} \left( s \right) \right) \right) ds \\
		&\hspace{1cm} -e^{t \Delta} \int_{t/2}^{+ \infty} \left( f \left( u \left( s \right) \right) - f \left( u_{N} \left( s \right) \right) \right) ds.
	\end{align*}
	From \eqref{eq:esti_f}, (B)$_{N}$, Proposition \ref{pro:P_global}, and Lemma \ref{lem:P_appro_decay}, it follows that
	\begin{align*}
		&\left\lVert \int_{t/2}^{t} e^{\left( t-s \right) \Delta} \left( f \left( u \left( s \right) \right) - f \left( u_{N} \left( s \right) \right) \right) ds \right\rVert_{q} \\
		&\hspace{1cm} \leq C \int_{t/2}^{t} \left( \left\lVert u \left( s \right) \right\rVert_{\infty}^{p-1} + \left\lVert u_{N} \left( s \right) \right\rVert_{\infty}^{p-1} \right) \left\lVert u \left( s \right) -u_{N} \left( s \right) \right\rVert_{q} ds \\
		&\hspace{1cm} \leq C \int_{t/2}^{t} \left( 1+s \right)^{- \frac{n}{2} \left( p-1 \right)} s^{- \frac{n}{2} \left( 1- \frac{1}{q} \right)} \zeta_{N} \left( s \right) ds \\
		&\hspace{1cm} \leq Ct^{- \frac{n}{2} \left( 1- \frac{1}{q} \right)} \int_{t/2}^{t} s^{-1- \sigma} \zeta_{N} \left( s \right) ds, \\
		&\left\lVert e^{t \Delta} \int_{t/2}^{+ \infty} \left( f \left( u \left( s \right) \right) - f \left( u_{N} \left( s \right) \right) \right) ds \right\rVert_{q} \\
		&\hspace{1cm} \leq C \int_{t/2}^{+ \infty} \left( \left\lVert u \left( s \right) \right\rVert_{\infty}^{p-1} + \left\lVert u_{N} \left( s \right) \right\rVert_{\infty}^{p-1} \right) \left\lVert u \left( s \right) -u_{N} \left( s \right) \right\rVert_{q} ds \\
		&\hspace{1cm} \leq C \int_{t/2}^{+ \infty} \left( 1+s \right)^{- \frac{n}{2} \left( p-1 \right)} s^{- \frac{n}{2} \left( 1- \frac{1}{q} \right)} \zeta_{N} \left( s \right) ds \\
		&\hspace{1cm} \leq Ct^{- \frac{n}{2} \left( 1- \frac{1}{q} \right)} \int_{t/2}^{+ \infty} s^{-1- \sigma} \zeta_{N} \left( s \right) ds.
	\end{align*}
	In the same way, we obtain
	\begin{align*}
		&\left\lVert \int_{0}^{t/2} \int_{0}^{1} s \Delta e^{\left( t-s \theta \right) \Delta} \left( f \left( u \left( s \right) \right) -f \left( u_{N} \left( s \right) \right) \right) d \theta ds \right\rVert_{q} \\
		&\hspace{1cm} \leq C \int_{0}^{t/2} \int_{0}^{1} s \left( t-s \theta \right)^{- \frac{n}{2} \left( 1- \frac{1}{q} \right) -1} \left( \left\lVert u \left( s \right) \right\rVert_{\infty}^{p-1} + \left\lVert u_{N} \left( s \right) \right\rVert_{\infty}^{p-1} \right) \left\lVert u \left( s \right) -u_{N} \left( s \right) \right\rVert_{1} d \theta ds \\
		&\hspace{1cm} \leq Ct^{- \frac{n}{2} \left( 1- \frac{1}{q} \right) -1} \left( \int_{0}^{2^{N-1}} + \int_{2^{N-1}}^{t/2} \right) s \left( \left\lVert u \left( s \right) \right\rVert_{\infty}^{p-1} + \left\lVert u_{N} \left( s \right) \right\rVert_{\infty}^{p-1} \right) \left\lVert u \left( s \right) -u_{N} \left( s \right) \right\rVert_{1} ds \\
		&\hspace{1cm} \leq Ct^{- \frac{n}{2} \left( 1- \frac{1}{q} \right) -1} \left( \int_{0}^{2^{N-1}} \left( 1+s \right)^{- \frac{n}{2} \left( p-1 \right)} ds+ \int_{2^{N-1}}^{t/2} s \left( 1+s \right)^{- \frac{n}{2} \left( p-1 \right)} \zeta_{N} \left( s \right) ds \right) \\
		&\hspace{1cm} \leq Ct^{- \frac{n}{2} \left( 1- \frac{1}{q} \right) -1} \left( 1+ \int_{2^{N-1}}^{t/2} s^{- \sigma} \zeta_{N} \left( s \right) ds \right).
	\end{align*}
	Now, let $0< \varepsilon \ll 1$.
	Then, by simple calculations, we have
	\begin{align*}
		\int_{t/2}^{t} s^{-1- \sigma} \zeta_{N} \left( s \right) ds &\leq \int_{t/2}^{+ \infty} s^{-1- \sigma} \zeta_{N} \left( s \right) ds \\
		&\leq \begin{dcases}
			Ct^{- \left( N+1 \right) \sigma}, &\quad 0<N \sigma <1, \\
			Ct^{-1- \frac{1}{N} + \varepsilon}, &\quad N \sigma =1, \\
			Ct^{-1- \sigma}, &\quad N \sigma >1,
		\end{dcases} \\
		1+ \int_{2^{N-1}}^{t/2} s^{- \sigma} \zeta_{N} \left( s \right) ds
		&\leq \begin{dcases}
			1+ \int_{2^{N-1}}^{t/2} s^{- \left( N+1 \right) \sigma} ds, &\quad 0<N \sigma <1, \\
			1+C \int_{2^{N-1}}^{t/2} s^{-1- \frac{1}{N} + \varepsilon} ds, &\quad N \sigma =1, \\
			1+ \int_{2^{N-1}}^{t/2} s^{-1- \sigma} ds, &\quad N \sigma >1
		\end{dcases} \\
		&\leq \begin{dcases}
			1+Ct^{1- \left( N+1 \right) \sigma} \leq Ct^{1- \left( N+1 \right) \sigma}, &\quad 0< \left( N+1 \right) \sigma <1, \\
			1+ \log \frac{t}{2} \leq 2 \log \left( 1+t \right), &\quad \left( N+1 \right) \sigma =1, \\
			C, &\quad N \sigma <1< \left( N+1 \right) \sigma, \\
			C, &\quad N \sigma =1, \\
			C, &\quad N \sigma >1.
		\end{dcases}
	\end{align*}
	Combining these estimates yields
	\begin{align*}
		t^{\frac{n}{2} \left( 1- \frac{1}{q} \right)} \left\lVert u \left( t \right) -u_{N+1} \left( t \right) \right\rVert_{q} &\leq \begin{dcases}
			Ct^{- \left( N+1 \right) \sigma}, &\quad 0< \left( N+1 \right) \sigma <1, \\
			C \left( t^{-1} +t^{-1} \log \left( 1+t \right) \right) \leq Ct^{-1} \log \left( 1+t \right), &\quad \left( N+1 \right) \sigma =1, \\
			C \left( t^{- \left( N+1 \right) \sigma} +t^{-1} \right) \leq Ct^{-1}, &\quad N \sigma <1< \left( N+1 \right) \sigma, \\
			C \left( t^{-1- \frac{1}{N} + \varepsilon} +t^{-1} \right) \leq Ct^{-1}, &\quad N \sigma =1, \\
			C \left( t^{-1- \sigma} +t^{-1} \right) \leq Ct^{-1}, &\quad N \sigma >1.
		\end{dcases}
	\end{align*}
	This completes the proof.
\end{proof}

\begin{proof}[Proof of Theorem \ref{th:P_aymptotics}]
	Let $q \in \left[ 1, + \infty \right]$.
	We note that
	\begin{align*}
		p>1+ \frac{3+m}{n} \quad \Longleftrightarrow \quad - \frac{n}{2} \left( p-1 \right) + \frac{m+1}{2} <-1.
	\end{align*}
	By \eqref{eq:esti_f}, Proposition \ref{pro:P_global}, and Theorem \ref{th:P_weight}, we have
	\begin{align*}
		\left\lVert x^{\alpha} \varphi_{1} \right\rVert_{1} &\leq \left\lVert x^{\alpha} \varphi \right\rVert_{1} + \int_{0}^{+ \infty} \left\lVert x^{\alpha} f \left( u \left( s \right) \right) \right\rVert_{1} ds \\
		&\leq \left\lVert x^{\alpha} \varphi \right\rVert_{1} +C \int_{0}^{+ \infty} \left\lVert u \left( s \right) \right\rVert_{\infty}^{p-1} \left\lVert x^{\alpha} u \left( s \right) \right\rVert_{1} ds \\
		&\leq \left\lVert x^{\alpha} \varphi \right\rVert_{1} +C \int_{0}^{+ \infty} \left( 1+s \right)^{- \frac{n}{2} \left( p-1 \right) + \frac{\left\lvert \alpha \right\rvert}{2}} ds \leq C
	\end{align*}
	for all $\alpha \in \mathbb{Z}_{\geq 0}^{n}$ with $\left\lvert \alpha \right\rvert \leq m+1$, whence follows $\varphi_{1} \in L_{m+1}^{1} \left( \mathbb{R}^{n} \right)$.
	Therefore, it follows from Proposition \ref{pro:heat_asymptotics_higher} that
	\begin{align}
		\label{eq:4.1}
		&t^{\frac{n}{2} \left( 1- \frac{1}{q} \right)} {\,} \biggl\lVert e^{t \Delta} \varphi_{1} - \sum_{k=0}^{m} 2^{-k} t^{- \frac{k}{2}} \sum_{\left\lvert \alpha \right\rvert =k} c_{\alpha} \delta_{t} \left( \bm{h}_{\alpha} G_{1} \right) \biggr\rVert_{q} \leq 2^{- \left( m+1 \right)} t^{- \frac{m+1}{2}} \sum_{\left\lvert \alpha \right\rvert =m+1} \frac{1}{\alpha !} \left\lVert \bm{h}_{\alpha} G_{1} \right\rVert_{q} \left\lVert x^{\alpha} \varphi_{1} \right\rVert_{1}
	\end{align}
	holds for any $t>0$, where
	\begin{align*}
		c_{\alpha} \coloneqq \frac{1}{\alpha !} \int_{\mathbb{R}^{n}} y^{\alpha} \varphi_{1} \left( y \right) dy.
	\end{align*}
	On the other hand, by Theorem \ref{th:P_appro} with $N=1$, there exists $C_{1, q} >0$ such that
	\begin{align}
		\label{eq:4.2}
		t^{\frac{n}{2} \left( 1- \frac{1}{q} \right)} \left\lVert u \left( t \right) -e^{t \Delta} \varphi_{1} \right\rVert_{q} \leq \begin{dcases}
			C_{1, q} t^{- \frac{1}{2}}, &\quad m=0, \\
			C_{1, q} t^{-1}, &\quad m=1
		\end{dcases}
	\end{align}
	hold for any $t>1$.
	Here, we remark that
	\begin{align*}
		\sigma \coloneqq \frac{n}{2} \left( p-1 \right) -1> \frac{m+1}{2} = \begin{dcases}
			\frac{1}{2}, &\quad m=0, \\
			1, &\quad m=1.
		\end{dcases}
	\end{align*}
	Combining \eqref{eq:4.1} and \eqref{eq:4.2}, we can deduce the desired result.
\end{proof}

\begin{rem} \label{rem:P_asymptotics_higher}
	In the same way as in the proof of Theorem \ref{th:P_aymptotics}, we see that if $m \in \mathbb{Z}_{\geq 0}$, $p>1+ \left( 3+m \right) /n$ and $\varphi \in \left( L_{m+1}^{1} \cap L^{\infty} \right) \left( \mathbb{R}^{n} \right)$, then for any $q \in \left[ 1, + \infty \right]$, there exists $C_{q} >0$ such that the estimates
	\begin{align*}
		t^{\frac{n}{2} \left( 1- \frac{1}{q} \right)} \biggl\lVert u \left( t \right) - \sum_{k=0}^{m} 2^{-k} t^{- \frac{k}{2}} \sum_{\left\lvert \alpha \right\rvert =k} c_{\alpha} \delta_{t} \left( \bm{h}_{\alpha} G_{1} \right) \biggr\rVert_{q} &\leq \begin{dcases}
			C_{q} t^{- \frac{1}{2}}, &\qquad m=0, \\
			C_{q} t^{-1}, &\qquad m \geq 1
		\end{dcases}
	\end{align*}
	hold for all $t>1$, where
	\begin{align*}
		c_{\alpha} &\coloneqq \frac{1}{\alpha !} \int_{\mathbb{R}^{n}} y^{\alpha} \varphi_{1} \left( y \right) dy, \\
		\varphi_{1} &\coloneqq \varphi + \int_{0}^{+ \infty} f \left( u \left( s \right) \right) ds.
	\end{align*}
	When $m \geq 2$, it is not suitable to consider the above estimates as higher order asymptotic expansions of the global solution since some terms in the asymptotic profiles decay faster than the remainders as $t \to + \infty$.
	See also Corollary \ref{cor:B.3} in Appendix \ref{app:B}.
\end{rem}

\appendix
\section{Proof of Proposition \ref{pro:P_global}} \label{app:A}

	Let $\varepsilon_{0} >0$ and let $\varphi \in \left( L^{1} \cap L^{\infty} \right) \left( \mathbb{R}^{n} \right)$ with $\left\lVert \varphi \right\rVert_{1} + \left\lVert \varphi \right\rVert_{\infty} \leq \varepsilon_{0}$.
	First, we prove the existence of global solutions to \eqref{P}.
	For each $M>0$, we define
	\begin{align*}
		X_{M} &\coloneqq \left\{ u \in X; {\,} \left\lVert u \right\rVert_{X} \leq M \right\}, \\
		\left\lVert u \right\rVert_{X} &\coloneqq \sup_{t \geq 0} \left( \left\lVert u \left( t \right) \right\rVert_{1} + \left( 1+t \right)^{\frac{n}{2}} \left\lVert u \left( t \right) \right\rVert_{\infty} \right), \\
		d \left( u, v \right) &\coloneqq \left\lVert u-v \right\rVert_{X}.
	\end{align*}
	Then, we can see that $\left( X_{M}, d \right)$ is a complete metric space.
	In addition, we define a mapping $\Phi \colon X_{M} \to X$ by
	\begin{align*}
		\left( \Phi u \right) \left( t \right) =e^{t \Delta} \varphi + \int_{0}^{t} e^{\left( t-s \right) \Delta} f \left( u \left( s \right) \right) ds.
	\end{align*}
	We show that $\Phi$ is a contraction on $X_{M}$ by choosing $M>0$ and $\varepsilon_{0} >0$ sufficiently small.
	If we can prove this assertion, by applying the Banach fixed point theorem, we obtain a global solution $u \in X_{M}$ to \eqref{P} as a fixed point of $\Phi$.
	Furthermore, H\"{o}lder's inequality implies
	\begin{align*}
		\left\lVert u \left( t \right) \right\rVert_{q} \leq \left\lVert u \left( t \right) \right\rVert_{1}^{\frac{1}{q}} \left\lVert u \left( t \right) \right\rVert_{\infty}^{1- \frac{1}{q}} \leq M \left( 1+t \right)^{- \frac{n}{2} \left( 1- \frac{1}{q} \right)}
	\end{align*}
	for any $q \in \left[ 1, + \infty \right]$ and $t \geq 0$.

	We first show that $\Phi \left( X_{M} \right) \subset X_{M}$ for appropriate $M>0$ and $\varepsilon_{0} >0$.
	Let $u \in X_{M}$ and let $t>0$.
	Then, by \eqref{eq:esti_f} and Lemmas \ref{lem:Lp-Lq} and \ref{lem:Lp-Lq_2}, we have
	\begin{align*}
		\left\lVert \left( \Phi u \right) \left( t \right) \right\rVert_{1} &\leq \left\lVert e^{t \Delta} \varphi \right\rVert_{1} + \int_{0}^{t} \bigl\lVert e^{\left( t-s \right) \Delta} f \left( u \left( s \right) \right) \bigr\rVert_{1} ds \\
		&\leq \left\lVert \varphi \right\rVert_{1} +C \int_{0}^{t} \left\lVert u \left( s \right) \right\rVert_{\infty}^{p-1} \left\lVert u \left( s \right) \right\rVert_{1} ds \\
		&\leq \varepsilon_{0} +C \left\lVert u \right\rVert_{X}^{p} \int_{0}^{t} \left( 1+s \right)^{- \frac{n}{2} \left( p-1 \right)} ds \\
		&\leq \varepsilon_{0} +CM^{p}, \\
		\left\lVert \left( \Phi u \right) \left( t \right) \right\rVert_{\infty} &\leq \left\lVert e^{t \Delta} \varphi \right\rVert_{\infty} + \left( \int_{0}^{t/2} + \int_{t/2}^{t} \right) \bigl\lVert e^{\left( t-s \right) \Delta} f \left( u \left( s \right) \right) \bigr\rVert_{\infty} ds \\
		&\leq C \left( 1+t \right)^{- \frac{n}{2}} \left( \left\lVert \varphi \right\rVert_{1} + \left\lVert \varphi \right\rVert_{\infty} \right) +C \int_{t/2}^{t} \left\lVert u \left( s \right) \right\rVert_{\infty}^{p} ds \\
		&\hspace{1cm} +C \int_{0}^{t/2} \left( 1+t-s \right)^{- \frac{n}{2}} \left\lVert u \left( s \right) \right\rVert_{\infty}^{p-1} \left( \left\lVert u \left( s \right) \right\rVert_{1} + \left\lVert u \left( s \right) \right\rVert_{\infty} \right) ds \\
		&\leq C \left( 1+t \right)^{- \frac{n}{2}} \varepsilon_{0} +C \left\lVert u \right\rVert_{X}^{p} \int_{t/2}^{t} \left( 1+s \right)^{- \frac{np}{2}} ds \\
		&\hspace{1cm} +C \left\lVert u \right\rVert_{X}^{p} \int_{0}^{t/2} \left( 1+t-s \right)^{- \frac{n}{2}} \left( 1+s \right)^{- \frac{n}{2} \left( p-1 \right)} ds \\
		&\leq C \left( 1+t \right)^{- \frac{n}{2}} \varepsilon_{0} +CM^{p} \left( 1+t \right)^{- \frac{n}{2}} \int_{t/2}^{t} \left( 1+s \right)^{- \frac{n}{2} \left( p-1 \right)} ds \\
		&\hspace{1cm} +CM^{p} \left( 1+t \right)^{- \frac{n}{2}} \int_{0}^{t/2} \left( 1+s \right)^{- \frac{n}{2} \left( p-1 \right)} ds \\
		&\leq C \left( 1+t \right)^{- \frac{n}{2}} \varepsilon_{0} +CM^{p} \left( 1+t \right)^{- \frac{n}{2}}.
	\end{align*}
	Combining these estimates yields
	\begin{align*}
		\left\lVert \Phi u \right\rVert_{X} \leq C' \varepsilon_{0} +C' M^{p}.
	\end{align*}
	Therefore, by taking $M>0$ and $\varepsilon_{0} >0$ to satisfy
	\begin{align*}
		C' M^{p-1} \leq \frac{1}{4}, \qquad \varepsilon_{0} \leq \frac{M}{2C'},
	\end{align*}
	we obtain $\left\lVert \Phi u \right\rVert_{X} \leq M$, whence follows $\Phi u \in X_{M}$.
	In the same way, we can derive
	\begin{align*}
		\left\lVert \Phi u- \Phi v \right\rVert_{X} \leq 2C' M^{p-1} \left\lVert u-v \right\rVert_{X} \leq \frac{1}{2} \left\lVert u-v \right\rVert_{X}
	\end{align*}
	for any $u, v \in X_{M}$, which implies that the mapping $\Phi \colon X_{M} \to X_{M}$ is a contraction.

	Finally, we show the uniqueness of global solutions to \eqref{P}.
	Let $u, v \in X$ be global solutions to \eqref{P} and set
	\begin{align*}
		M' \coloneqq \left\lVert u \right\rVert_{L^{\infty} \left( 0, + \infty; L^{\infty} \right)} + \left\lVert v \right\rVert_{L^{\infty} \left( 0, + \infty; L^{\infty} \right)}.
	\end{align*}
	Then, $u$ and $v$ satisfy the following integral equations, respectively:
	\begin{align*}
		u \left( t \right) &=e^{t \Delta} \varphi + \int_{0}^{t} e^{\left( t-s \right) \Delta} f \left( u \left( s \right) \right) ds, \\
		v \left( t \right) &=e^{t \Delta} \varphi + \int_{0}^{t} e^{\left( t-s \right) \Delta} f \left( v \left( s \right) \right) ds.
	\end{align*}
	Furthermore, by \eqref{eq:esti_f} and Lemma \ref{lem:Lp-Lq}, we have
	\begin{align*}
		\left\lVert u \left( t \right) -v \left( t \right) \right\rVert_{1}
		&\leq C \int_{0}^{t} \left( \left\lVert u \left( s \right) \right\rVert_{\infty}^{p-1} + \left\lVert v \left( s \right) \right\rVert_{\infty}^{p-1} \right) \left\lVert u \left( s \right) -v \left( s \right) \right\rVert_{1} ds \\
		&\leq C {M'}^{p-1} \int_{0}^{t} \left\lVert u \left( s \right) -v \left( s \right) \right\rVert_{1} ds
	\end{align*}
	for any $t>0$.
	Applying the Gr\"{o}nwall lemma, we obtain
	\begin{align*}
		\left\lVert u \left( t \right) -v \left( t \right) \right\rVert_{1} =0,
	\end{align*}
	whence follows $u=v$.

\section{Convergence to the asymptotic expansions} \label{app:B}

We are not able to apply Proposition \ref{pro:heat_asymptotics_higher} for $\varphi \in L_{m}^{1} \left( \mathbb{R}^{n} \right)$ to obtain the $m$-th order asymptotic expansion with a bound of the remainder with decay rate in $t$.
For $\varphi \in L_{m}^{1} \left( \mathbb{R}^{n} \right)$, we show that the remainder vanishes with higher order than $- \frac{n}{2} {\,} \bigl( 1- \frac{1}{q} \bigr)$ as $t \to + \infty$.

\begin{prop} \label{pro:B.1}
	Let $m \in \mathbb{Z}_{\geq 0}$, $\varphi \in L_{m}^{1} \left( \mathbb{R}^{n} \right)$ and $q \in \left[ 1, + \infty \right]$.
	Then,
	\begin{align*}
		\lim_{t \to + \infty} t^{\frac{n}{2} \left( 1- \frac{1}{q} \right)} {\,} \biggl\lVert e^{t \Delta} \varphi - \sum_{k=0}^{m} 2^{-k} t^{- \frac{k}{2}} \sum_{\left\lvert \alpha \right\rvert =k} c_{\alpha} \delta_{t} \left( \bm{h}_{\alpha} G_{1} \right) \biggr\rVert_{q} =0,
	\end{align*}
	where
	\begin{align*}
		c_{\alpha} \coloneqq \frac{1}{\alpha !} \int_{\mathbb{R}^{n}} y^{\alpha} \varphi \left( y \right) dy.
	\end{align*}
\end{prop}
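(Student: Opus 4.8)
The plan is to deduce Proposition \ref{pro:B.1} from the quantitative bound in Proposition \ref{pro:heat_asymptotics_higher} by a density argument, in the spirit of classical proofs of such asymptotic expansions. The starting observation is that the expression whose $L^{q}$-norm is to be estimated,
\begin{align*}
	\mathcal{E}_{m} \left( t \right) \varphi \coloneqq e^{t \Delta} \varphi - \sum_{k=0}^{m} 2^{-k} t^{- \frac{k}{2}} \sum_{\left\lvert \alpha \right\rvert =k} \frac{1}{\alpha !} \left( \int_{\mathbb{R}^{n}} y^{\alpha} \varphi \left( y \right) dy \right) \delta_{t} \left( \bm{h}_{\alpha} G_{1} \right),
\end{align*}
depends linearly on $\varphi$. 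Hence for any splitting $\varphi = \psi + g$ we have $\mathcal{E}_{m} \left( t \right) \varphi = \mathcal{E}_{m} \left( t \right) \psi + \mathcal{E}_{m} \left( t \right) g$, and the strategy is to estimate the first term by a strong weighted bound (available once $\psi$ has one extra moment) and the second by a crude bound carrying a small coefficient.

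First I would record the crude bound: for every $g \in L_{m}^{1} \left( \mathbb{R}^{n} \right)$ and every $t \geq 1$,
\begin{align*}
	t^{\frac{n}{2} \left( 1- \frac{1}{q} \right)} \left\lVert \mathcal{E}_{m} \left( t \right) g \right\rVert_{q} \leq C_{m, q} \sum_{\left\lvert \alpha \right\rvert \leq m} \left\lVert x^{\alpha} g \right\rVert_{1}
\end{align*}
for some $C_{m, q} > 0$ independent of $t$ and $g$. This follows by writing $\left\lVert e^{t \Delta} g \right\rVert_{q} = \left\lVert \left( \delta_{t} G_{1} \right) \ast g \right\rVert_{q} \leq t^{- \frac{n}{2} \left( 1- \frac{1}{q} \right)} \left\lVert G_{1} \right\rVert_{q} \left\lVert g \right\rVert_{1}$ by Young's inequality, using $\left\lVert \delta_{t} \left( \bm{h}_{\alpha} G_{1} \right) \right\rVert_{q} = t^{- \frac{n}{2} \left( 1- \frac{1}{q} \right)} \left\lVert \bm{h}_{\alpha} G_{1} \right\rVert_{q}$ and $\lvert \int y^{\alpha} g \, dy \rvert \leq \left\lVert x^{\alpha} g \right\rVert_{1}$, together with $t^{-k/2} \leq 1$ for $0 \leq k \leq m$ and $t \geq 1$.

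Next I would invoke the density of $L_{m+1}^{1} \left( \mathbb{R}^{n} \right)$ in $L_{m}^{1} \left( \mathbb{R}^{n} \right)$ with respect to the norm $\varphi \mapsto \sum_{\left\lvert \alpha \right\rvert \leq m} \left\lVert x^{\alpha} \varphi \right\rVert_{1}$. This is elementary: fixing $\chi \in C_{c}^{\infty} \left( \mathbb{R}^{n} \right)$ with $\chi \equiv 1$ near the origin, the truncations $\chi \left( \cdot / R \right) \varphi$ are compactly supported, hence lie in $L_{k}^{1} \left( \mathbb{R}^{n} \right)$ for every $k \in \mathbb{Z}_{\geq 0}$, and $\left\lVert x^{\alpha} \bigl( \varphi - \chi \left( \cdot / R \right) \varphi \bigr) \right\rVert_{1} \to 0$ as $R \to + \infty$ for each $\alpha$ with $\left\lvert \alpha \right\rvert \leq m$, by the dominated convergence theorem. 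Given $\varepsilon > 0$, I choose such a $\psi \in L_{m+1}^{1} \left( \mathbb{R}^{n} \right)$ with $C_{m, q} \sum_{\left\lvert \alpha \right\rvert \leq m} \left\lVert x^{\alpha} \left( \varphi - \psi \right) \right\rVert_{1} < \varepsilon / 2$. Applying Proposition \ref{pro:heat_asymptotics_higher} to $\psi$ (with the given $m$) yields
\begin{align*}
	t^{\frac{n}{2} \left( 1- \frac{1}{q} \right)} \left\lVert \mathcal{E}_{m} \left( t \right) \psi \right\rVert_{q} \leq 2^{- \left( m+1 \right)} t^{- \frac{m+1}{2}} \sum_{\left\lvert \alpha \right\rvert =m+1} \frac{1}{\alpha !} \left\lVert \bm{h}_{\alpha} G_{1} \right\rVert_{q} \left\lVert x^{\alpha} \psi \right\rVert_{1} \longrightarrow 0
\end{align*}
as $t \to + \infty$, so this term is $< \varepsilon / 2$ for all sufficiently large $t$. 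Combining the two bounds through $\varphi = \psi + \left( \varphi - \psi \right)$ gives $\limsup_{t \to + \infty} t^{\frac{n}{2} \left( 1- \frac{1}{q} \right)} \left\lVert \mathcal{E}_{m} \left( t \right) \varphi \right\rVert_{q} \leq \varepsilon$, and since $\varepsilon > 0$ is arbitrary the limit is $0$. The only step needing any care is the density statement, but it requires no compactness or parabolic machinery; the rest is a standard approximation argument resting on Proposition \ref{pro:heat_asymptotics_higher}.
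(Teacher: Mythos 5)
Your argument is correct, and it follows the same overall strategy as the paper: approximate $\varphi$ by a function with one extra moment, apply Proposition \ref{pro:heat_asymptotics_higher} to the approximant, and control the approximation error uniformly in large $t$. The difference lies in the decomposition. You approximate $\varphi$ in the full weighted norm $\sum_{\left\lvert \alpha \right\rvert \leq m} \left\lVert x^{\alpha} \cdot \right\rVert_{1}$ (via the truncation argument $\chi \left( \cdot /R \right) \varphi$, which is fine) and then exploit the linearity of the error functional $\mathcal{E}_{m} \left( t \right)$ together with a crude bound that is uniform in $t \geq 1$; this gives a clean two-term splitting but requires the density of $L_{m+1}^{1}$ in $L_{m}^{1}$ with respect to the weighted norm. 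The paper instead approximates only in the plain $L^{1}$-norm by $\varphi_{j} \in C_{c} \left( \mathbb{R}^{n} \right)$ and splits the error into three pieces: $e^{t \Delta} \left( \varphi - \varphi_{j} \right)$, the remainder from Proposition \ref{pro:heat_asymptotics_higher} applied to $\varphi_{j}$, and the moment-mismatch terms $\left\lvert c_{j, \alpha} -c_{\alpha} \right\rvert \, t^{- \frac{k}{2}} \left\lVert \delta_{t} \left( \bm{h}_{\alpha} G_{1} \right) \right\rVert_{q}$. For $k \geq 1$ these mismatches need not be small at all, since the factor $t^{- \frac{k}{2}}$ sends them to zero; only the $k=0$ mismatch must be small, and it is controlled by $\left\lVert \varphi_{j} - \varphi \right\rVert_{1}$. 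So the paper trades your weighted-norm approximation for a slightly more involved bookkeeping of the expansion terms, while your version trades that bookkeeping for the (elementary, and correctly proved) weighted density lemma; both routes rest on the same key quantitative input and are equally valid.
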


\begin{rem}
	Proposition 1.1 in \cite{Kobayashi-Misawa} follows from Proposition \ref{pro:B.1} with $m=0$ and $n=q=2$.
	The authors in \cite{Kobayashi-Misawa} showed the result with the aid of the Fourier transform.
	More generally, for any $\varphi \in L^{1} \left( \mathbb{R}^{n} \right)$ and $q \in \left[ 1, + \infty \right]$, Proposition \ref{pro:B.1} with $m=0$ implies
	\begin{align*}
		\lim_{t \to + \infty} t^{\frac{n}{2} \left( 1- \frac{1}{q} \right)} \left\lVert e^{t \Delta} \varphi \right\rVert_{q} &= \left\lvert c_{0} \right\rvert \left\lVert G_{1} \right\rVert_{q} \\
		&= \begin{cases}
			\left\lvert c_{0} \right\rvert \left( 4 \pi \right)^{- \frac{n}{2} \left( 1- \frac{1}{q} \right)} q^{- \frac{n}{2q}}, &\quad 1 \leq q<+ \infty, \\
			\left\lvert c_{0} \right\rvert \left( 4 \pi \right)^{- \frac{n}{2}}, &\quad q=+ \infty,
		\end{cases}
	\end{align*}
	where
	\begin{align*}
		c_{0} \coloneqq \int_{\mathbb{R}^{n}} \varphi \left( y \right) dy.
	\end{align*}
\end{rem}

\begin{proof}[Proof of Proposition \ref{pro:B.1}]
	To begin with, $C_{c} \left( \mathbb{R}^{n} \right)$ denotes the set of continuous real-valued functions on $\mathbb{R}^{n}$ with compact support.
	Since $\varphi \in L^{1} \left( \mathbb{R}^{n} \right)$ and $C_{c} \left( \mathbb{R}^{n} \right)$ is dense in $L^{1} \left( \mathbb{R}^{n} \right)$, there exists a sequence $\left( \varphi_{j}; j \in \mathbb{Z}_{>0} \right)$ in $C_{c} \left( \mathbb{R}^{n} \right)$ such that $\left\lVert \varphi_{j} - \varphi \right\rVert_{1} \to 0$ as $j \to + \infty$.
	In particular, $\varphi_{j} \in L_{m+1}^{1} \left( \mathbb{R}^{n} \right)$ for any $j \in \mathbb{Z}_{>0}$.
	Here, we set
	\begin{align*}
		c_{j, \alpha} \coloneqq \frac{1}{\alpha !} \int_{\mathbb{R}^{n}} y^{\alpha} \varphi_{j} \left( y \right) dy
	\end{align*}
	for each $j \in \mathbb{Z}_{>0}$ and $\alpha \in \mathbb{Z}_{\geq 0}^{n}$ with $\left\lvert \alpha \right\rvert \leq m$.
	Then, by Lemma \ref{lem:Lp-Lq} and Proposition \ref{pro:heat_asymptotics_higher}, we have
	\begin{align*}
		&t^{\frac{n}{2} \left( 1- \frac{1}{q} \right)} {\,} \biggl\lVert e^{t \Delta} \varphi - \sum_{k=0}^{m} 2^{-k} t^{- \frac{k}{2}} \sum_{\left\lvert \alpha \right\rvert =k} c_{\alpha} \delta_{t} \left( \bm{h}_{\alpha} G_{1} \right) \biggr\rVert_{q} \\
		&\hspace{1cm} \leq t^{\frac{n}{2} \left( 1- \frac{1}{q} \right)} \left\lVert e^{t \Delta} \left( \varphi - \varphi_{j} \right) \right\rVert_{q} +t^{\frac{n}{2} \left( 1- \frac{1}{q} \right)} {\,} \biggl\lVert e^{t \Delta} \varphi_{j} - \sum_{k=0}^{m} 2^{-k} t^{- \frac{k}{2}} \sum_{\left\lvert \alpha \right\rvert =k} c_{j, \alpha} \delta_{t} \left( \bm{h}_{\alpha} G_{1} \right) \biggr\rVert_{q} \\
		&\hspace{2cm} +t^{\frac{n}{2} \left( 1- \frac{1}{q} \right)} \sum_{k=0}^{m} 2^{-k} t^{- \frac{k}{2}} \sum_{\left\lvert \alpha \right\rvert =k} \left\lvert c_{j, \alpha} -c_{\alpha} \right\rvert \left\lVert \delta_{t} \left( \bm{h}_{\alpha} G_{1} \right) \right\rVert_{q} \\
		&\hspace{1cm} \leq \left\lVert G_{1} \right\rVert_{q} \left\lVert \varphi - \varphi_{j} \right\rVert_{1} +2^{- \left( m+1 \right)} t^{- \frac{m+1}{2}} \sum_{\left\lvert \alpha \right\rvert =m+1} \frac{1}{\alpha !} \left\lVert \bm{h}_{\alpha} G_{1} \right\rVert_{q} \left\lVert x^{\alpha} \varphi_{j} \right\rVert_{1} \\
		&\hspace{2cm} + \left\lVert G_{1} \right\rVert_{q} \left\lVert \varphi_{j} - \varphi \right\rVert_{1} + \sum_{k=1}^{m} 2^{-k} t^{- \frac{k}{2}} \sum_{\left\lvert \alpha \right\rvert =k} \frac{1}{\alpha !} \left( \left\lVert x^{\alpha} \varphi_{j} \right\rVert_{1} + \left\lVert x^{\alpha} \varphi \right\rVert_{1} \right) \left\lVert \bm{h}_{\alpha} G_{1} \right\rVert_{q}
	\end{align*}
	for any $t>0$, whence follows
	\begin{align*}
		\limsup_{t \to + \infty} t^{\frac{n}{2} \left( 1- \frac{1}{q} \right)} {\,} \biggl\lVert e^{t \Delta} \varphi - \sum_{k=0}^{m} 2^{-k} t^{- \frac{k}{2}} \sum_{\left\lvert \alpha \right\rvert =k} c_{\alpha} \delta_{t} \left( \bm{h}_{\alpha} G_{1} \right) \biggr\rVert_{q} \leq 2 \left\lVert G_{1} \right\rVert_{q} \left\lVert \varphi_{j} - \varphi \right\rVert_{1}.
	\end{align*}
	Furthermore, since $\left\lVert \varphi_{j} - \varphi \right\rVert_{1} \to 0$ as $j \to + \infty$, we can deduce that
	\begin{align*}
		\lim_{t \to + \infty} t^{\frac{n}{2} \left( 1- \frac{1}{q} \right)} {\,} \biggl\lVert e^{t \Delta} \varphi - \sum_{k=0}^{m} 2^{-k} t^{- \frac{k}{2}} \sum_{\left\lvert \alpha \right\rvert =k} c_{\alpha} \delta_{t} \left( \bm{h}_{\alpha} G_{1} \right) \biggr\rVert_{q} =0.
	\end{align*}
\end{proof}

By using Proposition \ref{pro:B.1} instead of Proposition \ref{pro:heat_asymptotics_higher} in the proof of Theorem \ref{th:P_aymptotics}, we have the following corollary:

\begin{cor} \label{cor:B.3}
	Let $m \in \mathbb{Z}_{\geq 0}$ and let $p>1+ \left( 2+m \right) /n$.
	Let $\varphi \in \left( L_{m}^{1} \cap L^{\infty} \right) \left( \mathbb{R}^{n} \right)$ satisfy $\left\lVert \varphi \right\rVert_{1} + \left\lVert \varphi \right\rVert_{\infty} \leq \varepsilon_{0}$ and let $u \in X$ be the global solution to \eqref{P} given in Proposition \ref{pro:P_global}.
	Then,
	\begin{align*}
		\lim_{t \to + \infty} t^{\frac{n}{2} \left( 1- \frac{1}{q} \right)} {\,} \biggl\lVert u \left( t \right) - \sum_{k=0}^{m} 2^{-k} t^{- \frac{k}{2}} \sum_{\left\lvert \alpha \right\rvert =k} c_{\alpha} \delta_{t} \left( \bm{h}_{\alpha} G_{1} \right) \biggr\rVert_{q} =0
	\end{align*}
	holds for any $q \in \left[ 1, + \infty \right]$, where
	\begin{align*}
		c_{\alpha} &\coloneqq \frac{1}{\alpha !} \int_{\mathbb{R}^{n}} y^{\alpha} \varphi_{1} \left( y \right) dy, \\
		\varphi_{1} &\coloneqq \varphi + \int_{0}^{+ \infty} f \left( u \left( s \right) \right) ds.
	\end{align*}
\end{cor}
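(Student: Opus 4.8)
The plan is to re-run the proof of Theorem~\ref{th:P_aymptotics} essentially word for word, replacing the single use of Proposition~\ref{pro:heat_asymptotics_higher} by an application of Proposition~\ref{pro:B.1}. Since Proposition~\ref{pro:B.1} only requires the datum to lie in $L^{1}_{m}(\mathbb{R}^{n})$ (and delivers a qualitative limit instead of a quantitative remainder bound), this weakening is exactly what lets us drop from $\varphi\in L^{1}_{m+1}$, $p>1+(3+m)/n$ down to $\varphi\in L^{1}_{m}$, $p>1+(2+m)/n$.

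The key preliminary step is to check that $\varphi_{1}\in L^{1}_{m}(\mathbb{R}^{n})$. Fix $\alpha\in\mathbb{Z}_{\geq0}^{n}$ with $\left\lvert\alpha\right\rvert\leq m$. Using \eqref{eq:esti_f} (recall $f(0)=0$), the decay estimate \eqref{eq:P_decay} of Proposition~\ref{pro:P_global}, and Theorem~\ref{th:P_weight} (applied with $\left\lvert\alpha\right\rvert$ in place of $m$, so that $\left\lVert x^{\alpha}u(s)\right\rVert_{1}\leq C(1+s^{m/2})$ for $\left\lvert\alpha\right\rvert\leq m$), I would estimate
\begin{align*}
	\left\lVert x^{\alpha}\varphi_{1}\right\rVert_{1}\leq\left\lVert x^{\alpha}\varphi\right\rVert_{1}+C\int_{0}^{+\infty}\left\lVert u(s)\right\rVert_{\infty}^{p-1}\left\lVert x^{\alpha}u(s)\right\rVert_{1}\,ds\leq\left\lVert x^{\alpha}\varphi\right\rVert_{1}+C\int_{0}^{+\infty}(1+s)^{-\frac{n}{2}(p-1)+\frac{m}{2}}\,ds,
\end{align*}
and the integral converges because $p>1+(2+m)/n$ is equivalent to $-\frac{n}{2}(p-1)+\frac{m}{2}<-1$. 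Hence $\varphi_{1}\in L^{1}_{m}(\mathbb{R}^{n})$, so in particular every $c_{\alpha}$ with $\left\lvert\alpha\right\rvert\leq m$ is finite.

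With this in hand the rest is assembly. Applying Proposition~\ref{pro:B.1} to the datum $\varphi_{1}$ gives
\begin{align*}
	\lim_{t\to+\infty}t^{\frac{n}{2}\left(1-\frac{1}{q}\right)}\,\biggl\lVert e^{t\Delta}\varphi_{1}-\sum_{k=0}^{m}2^{-k}t^{-\frac{k}{2}}\sum_{\left\lvert\alpha\right\rvert=k}c_{\alpha}\delta_{t}\bigl(\bm{h}_{\alpha}G_{1}\bigr)\biggr\rVert_{q}=0.
\end{align*}
On the other hand, since $p>1+(2+m)/n\geq1+2/n=p_{\mathrm{F}}(n)$ we have $\sigma=\frac{n}{2}(p-1)-1>0$, so Proposition~\ref{pro:4.1} (the $N=1$ case of Theorem~\ref{th:P_appro}) yields, for $t>1$,
\begin{align*}
	t^{\frac{n}{2}\left(1-\frac{1}{q}\right)}\left\lVert u(t)-e^{t\Delta}\varphi_{1}\right\rVert_{q}\leq C_{q}\begin{dcases}t^{-\sigma},&0<\sigma<1,\\ t^{-1}\log(1+t),&\sigma=1,\\ t^{-1},&\sigma>1,\end{dcases}
\end{align*}
whose right-hand side tends to $0$ as $t\to+\infty$ in each case. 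Adding the last two displays via the triangle inequality finishes the proof.

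I expect the only genuinely delicate point to be the verification that $\varphi_{1}\in L^{1}_{m}$: this is where Theorem~\ref{th:P_weight} is indispensable, since without the a priori weighted bound $\left\lVert x^{\alpha}u(s)\right\rVert_{1}\lesssim1+s^{m/2}$ one has no control on $\left\lVert x^{\alpha}f(u(s))\right\rVert_{1}$, and the borderline for integrability in $s$ is precisely $p=1+(2+m)/n$. Everything else is a mechanical combination of Propositions~\ref{pro:4.1} and~\ref{pro:B.1}.
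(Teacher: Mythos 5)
Your proposal is correct and follows exactly the route the paper intends: Remark/Appendix B explicitly says Corollary \ref{cor:B.3} is obtained by re-running the proof of Theorem \ref{th:P_aymptotics} with Proposition \ref{pro:B.1} in place of Proposition \ref{pro:heat_asymptotics_higher}, and your verification that $\varphi_{1} \in L_{m}^{1} \left( \mathbb{R}^{n} \right)$ under $p>1+ \left( 2+m \right) /n$ via \eqref{eq:esti_f}, Proposition \ref{pro:P_global} and Theorem \ref{th:P_weight}, combined with Proposition \ref{pro:4.1} and the triangle inequality, is precisely that argument.
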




\end{document}